\documentclass[11pt,a4paper]{article}
\usepackage{fullpage}
\usepackage{stmaryrd}
\usepackage{amsmath,amsfonts,amssymb,mathrsfs,amsthm,extarrows,textcomp,amscd}
\usepackage{enumerate,enumitem}
\usepackage{cases}
\usepackage{bm}
\usepackage{diagbox,multirow}
\usepackage{longtable}
\usepackage{graphicx,subfig}
\usepackage{algorithm,algorithmic}
\usepackage{caption}
\usepackage{xcolor}
\usepackage{cite}
\usepackage[colorlinks,linkcolor=blue,anchorcolor=red,citecolor=green]{hyperref}

\usepackage{authblk}
\theoremstyle{definition}

\newtheorem{theorem}{\bf Theorem}[section]
\newtheorem{lemma}{\bf Lemma}[section]

\newtheorem{remark}{\bf Remark}[section]

\numberwithin{equation}{section}

\numberwithin{table}{section}
\numberwithin{figure}{section}

\newcommand{\dx}{\,{\rm d}{\bf{x}}}
\newcommand{\ds}{\,{\rm d}{s}}
\newcommand{\curl}{\,{\rm curl}\,}
\newcommand{\Curl}{\,{\rm \bf{curl}}\,}

\newcommand{\dive}{\,{\rm div}\,}

\title{Decoupled Divergence-Free Neural Networks Basis Method for Incompressible Fluid Problems}

\author[1]{Jinbao Cheng %
\thanks{This author is now working in Banma Network Technology Co., Ltd., Shanghai 200030, China}}

\author[1]{Jianguo Huang %
\thanks{Corresponding author}}

\author[2,3]{Haoqin Wang %
\thanks{The work of this author was partially carried out at the Academy of Mathematics and Systems Science,
Chinese Academy of Sciences}}

\author[3]{Tao Zhou}

\affil[1]{\small School of Mathematical Sciences and MOE-LSC, Shanghai Jiao Tong University, \newline Shanghai 200240, China}
\affil[2]{\small School of Mathematics and Statistics, Yunnan University, Kunming 650500, China}
\affil[3]{\small Institute of Computational Mathematics and Scientific/Engineering Computing, \newline Academy of Mathematics and Systems Science, Chinese Academy of Sciences, Beijing 100190, China}

\affil[ ]{\footnotesize Emails: \texttt{ jinbao\_cheng@163.com} (J. Cheng), \texttt{jghuang@sjtu.edu.cn} (J. Huang), \texttt{wanghaoqin@ynu.edu.cn} (H. Wang), \texttt{tzhou@lsec.cc.ac.cn} (T. Zhou)}

\date{}

\begin{document}
\maketitle

\begin{abstract}
We propose a decoupled divergence-free neural networks basis (Decoupled-DFNN) method for solving incompressible flow problems, including the Stokes and Navier-Stokes equations. To ensure the divergence free property exactly, the velocity field is represented as the curl of a stream function in two dimensions and as the curl of a vector potential in three dimensions. Beyond classical stream-function or velocity-vorticity formulations, we further utilize the properties of the curl operator to derive two specific decoupled subproblems for the velocity (through the stream function or vector potential) and the pressure, respectively.  The proposed formulations enable a sequential solution strategy, in which the velocity and pressure are solved independently. To resolve the inherent nonlinearity of the Navier-Stokes equations, we employ a Gauss-Newton linearization strategy, transforming the nonlinear velocity subproblem into a sequence of linear subproblems. These decoupled subproblems for velocity and pressure are subsequently solved using the TransNet framework. Compared with existing methods, the proposed approach reduces computational cost while strictly preserving the incompressibility constraint.

%Unlike classical numerical methods that avoid high-order formulations with complex representation,  the proposed framework naturally supports them through automatic differentiation and neural network representations.

\vskip 0.5cm
{\bf Keywords. extreme learning machine, decouple, divergence-free, Stokes equations, Navier-Stokes equations}
\vskip 0.1cm
{\bf AMS subject classifications: 65N99, 68T07}
\end{abstract}

\section{Introduction}
Incompressible fluid problems have broad applications in science and engineering. However, except for a few special cases, it is impossible to get analytical solutions to these problems. To address these challenges, several classical numerical methods have been developed, including finite difference, finite element, and spectral methods \cite{Temam, Girault-Raviart, Canuto, Quarteroni-Valli, Quartapelle}.
In this work, we focus on a neural network-based numerical approach for solving the steady Stokes problem
\begin{subnumcases}{}
-\nu \Delta \bm{u} + \nabla p = \bm{f} \quad \text{in}\ \Omega, \label{SE-a}
\\
\dive \bm{u} = 0 \quad \text{in} \ \Omega, \label{SE-b}
\\
\bm{u} = \bm{0} \quad \text{on} \ \Gamma;   \label{SE-c}
\end{subnumcases}
and the Navier-Stokes problem
\begin{subnumcases}{}
-\nu \Delta \bm{u} + (\bm{u}\cdot \nabla) \bm{u} + \nabla p = \bm{f} \quad \text{in} \ \Omega, \ \label{NS-a}
\\
\dive \bm{u} = 0 \quad \text{in} \ \Omega, \ \label{NS-b}
\\
\bm{u} = \bm{0} \quad \text{on} \ \Gamma.  \label{NS-c}
\end{subnumcases}
Here, $\Omega$ is a bounded simply-connected domain with Lipschitz boundary $\Gamma$ in $\mathbb{R}^d\  (d=2,3)$, $\bm{u}$ is the non-dimensional velocity vector, $p$ is the non-dimensional pressure, the parameter $\nu = R_e^{-1}$ represents the kinematic viscosity with $R_e$ being the Reynolds number.

In recent years, machine learning techniques have been widely adopted in computational and applied mathematics, leading to the development of several neural network-based numerical methods for partial differential equations (PDEs). These approaches can be generally divided into two categories. The first category involves stochastic methods, where PDEs are reformulated as stochastic differential equations (SDEs) and solved using deep learning techniques, such as DeepBSDE \cite{E-2017}, deep backward schemes \cite{Hure-2020}, and martingale neural networks \cite{Cai-2026,Cai-2025}, among others. The second category is direct methods. The most well-known of these is the physics-informed neural network (PINN) method \cite{PINN}. A comprehensive review of PINN and its variants can be found in \cite{PIML,DeepXDE,Cuomo-2022}, and numerical analysis can be found in \cite{Acta-2024}. While PINNs typically aim to obtain classical solutions to PDEs, other methods have been developed to find weak solutions, such as the deep Ritz method \cite{deepRitz}, weak adversarial networks \cite{WAN}, and Friedrichs learning \cite{Chen-2023}. In these approaches, deep neural networks are utilized to parameterize the solutions to either SDEs or PDEs, where the network parameters are optimized to satisfy the governing physical laws. The great success of these methods is largely due to the universal approximation properties of neural networks \cite{Hornik-1989,Acta-2021}, particularly their ability to mitigate the ``curse of dimensionality'' \cite{Barron1993}. For a more detailed overview of machine learning in computational mathematics, we refer the reader to \cite{E-review1, E-review2}.

Although these methods have achieved significant success in various PDEs, their computational cost remains substantial, primarily due to the expensive non-convex optimization required for deep architectures. Consequently, one is tempted to utilize shallow neural networks as a solution ansatz to balance computational accuracy and efficiency. Specifically, the weights and biases of the hidden layer are randomly initialized and kept fixed, while the trainable parameters are limited to the weight coefficients of the output layer. By constructing the loss function based on the physics-informed residual minimization principle, a least-squares minimization problem with respect to the output layer weights is formed, which can be efficiently solved using existing linear or nonlinear least-squares solvers. In this way, the training process is transformed into a numerical linear algebra problem, which significantly reduce training time while preserving the approximation power of neural networks. This approach is known as the extreme learning machine (ELM) \cite{Huang-2006}, with early works including \cite{Sun-2019,Dwivedi-2020,Calabro-2021}. Subsequently, Dong and Li combined non-overlapping domain decomposition with local ELM to solve PDEs \cite{Dong-2021a}. In \cite{Chen-2022}, the partition of unity method was combined with randomized neural networks to produce the random feature method, and \cite{Shang-2023, Sun-2024} integrated traditional numerical schemes with randomized neural networks to develop new frameworks. Furthermore, \cite{Weng-2026} adopted adaptive strategies to design an efficient algorithm. A key issue in ELM and its variants is the choice of hyperparameters. Some efficient algorithms to address this have been proposed in \cite{Dong-2021b, Dong-2022}. Furthermore, \cite{TransNet} introduced a strategy for generating uniformly distributed neurons in the unit ball. To unify the previous methods, Huang and Wu propose the neural network basis method and its Gauss-Newton optimizer for solving general PDEs \cite{Huang-2025}. More recently, the work was extended to handle low regularity problems \cite{Huang-2026}.

In the context of fluid simulations, a number of specialized machine learning methods have also been proposed. The Stokes problem is reformulated in a first-order velocity-pressure-vorticity form, and convergence and error estimates are established for the two-dimensional case in \cite{Liu-2022}. \cite{Jin-2021} applied PINNs to solve the Navier-Stokes equations in both velocity-pressure and vorticity-velocity formulations. To accelerate training for the steady Navier-Stokes equations, \cite{Liu-2023} proposed four linearized iteration strategies. Margenberg et al. developed a hybrid solver by augmenting coarse-grid finite element simulations with fine-scale information provided by deep neural networks \cite{Margenberg-2d, Margenberg-3d}. In two dimensions, they further employed stream functions to construct structure-preserving algorithms \cite{Margenberg-2022}. In \cite{Zhang-2022}, the authors propose deep random vortex networks (DRVN), which representees the velocity field using a probabilistic formulation based on the Feynman-Kac formula. Cherepanov and Ertel extended this idea to an implicit DRVN that strictly enforces physical constraints in two dimensions without requiring the Biot-Savart kernel \cite{Cherepanov-2025}. Additionally, \cite{Kashefi-2022} combined PINNs with PointNet to solve Navier-Stokes problems on irregular geometries. As for the error analysis, \cite{Minakowski-2023} derived a priori and a posteriori error estimates for the Stokes problem solved by the deep Ritz method, while \cite{DeRyck-2024} establishes rigorous error bounds for PINN approximations of the incompressible Navier-Stokes equations.

It is worth mentioning that the divergence-free condition is a fundamental conservation law in incompressible fluid flows, which means the mass conversation of the underlying fluid. Most existing methods enforce this constraint through adding penalty terms in the loss function, but it is a critical issue to choose the penalty parameter feasibly. The other way is to use the stream function based formulations in two dimensions  (cf. \cite{Margenberg-2022, Liu-2022}), which can ensure the incompressibility exactly. However, it is a major challenge to extend this approach to three dimensions. Moreover, even regardless of whether a velocity-pressure or velocity-vorticity formulation is adopted (cf. \cite{Jin-2021}), the resulting system consists of coupled PDEs in which the unknown variables must be solved simultaneously, leading to high computational complexity.

In this paper, we propose the decoupled divergence-free neural networks basis (Decoupled-DFNN) method for numerically solving the steady Stokes equation and Navier-Stokes equation in two-dimensions and three-dimensions. We treat the incompressibility requirement as a linear constraint. Following the general framework for exact enforcement of linear constraints $\mathcal{A}(u) =0$ introduced in \cite{Du-2021}, one needs to construct a linear operator $\mathcal{G}$ such that
\[
	\text{im}(\mathcal{G}) \subset \text{ker}(\mathcal{A}),
\]
By substituting the solution ansatz $u = \mathcal{G}(v)$, the constraint is satisfied exactly along the solution trajectory, i.e $\mathcal{A}\circ \mathcal{G}(v)=0$. In our context, $\mathcal{A} = \dive,$ and we employ the theory of vector fields to construct the operator $\mathcal{G}$ depending on the dimensionality. In the two-dimensional case, we build upon the well-established stream function formulation \cite{Quartapelle}. While traditional applications of this technique typically involve solving a system where the stream function and pressure remain coupled, we introduce a further refinement. By applying the curl operator to both sides of the governing equations, we eliminate the pressure gradient and derive a fully decoupled subproblem for the velocity field (represented by the stream function), as discussed in \cite{Guo-1997, Lequeurre-2020}. This subproblem is then solved efficiently using the TransNet framework. Once the velocity field is obtained, the remaining governing equation reduces to a pressure-only problem, for which a classical solution can be recovered efficiently by TransNet as well. In the three-dimensional case, our method draws inspiration from the vorticity-vector potential formulations \cite{E-1997, Liu-2001}. Whereas standard formulations of this type often involve a coupling between the potential and vorticity, our approach further transforms the governing equations to derive a decoupled subproblem formulated solely for the vector potential by utilizing the properties of the curl operator. This extension allows us to resolve the velocity field (represented by the vector potential) without simultaneous calculation of the vorticity or pressure. Subsequently, the pressure field is recovered from the resolved velocity. Similarly, the decoupled subproblems are resolved by the TransNet. Compared with existing methods, the Decoupled-DFNN significantly reduces computational cost by breaking these traditional couplings while strictly preserving the incompressibility constraint to machine precision. Our contributions are summarized as follows:
\begin{itemize}
\item We derive decoupled divergence-free formulations for incompressible flows in both two and three dimensions, leading to single-field subproblems for the velocity representation (stream function in 2D and vector potential in 3D) that are free of pressure and vorticity coupling.

\item We develop a neural network-based framework to efficiently solve the resulting decoupled subproblems for both velocity and pressure. The key advantage lies in its mesh-free feature.

\item We demonstrate that the proposed Decoupled-DFNN enforces the divergence-free constraint to machine precision while significantly reducing computational cost.
\end{itemize}
%{\color{red} We mention that part of the work is taken from the dissertation of the first named author \cite{Cheng2025}.}

The rest of this paper is organized as follows. Section 2 introduces decoupled divergence free formulations for the Stokes and Navier-Stokes equations in both two and three dimensions. Based on these formulations, Decoupled-DFNN methods are proposed in Section 3. Section 4 presents several numerical experiments to demonstrate the performance of the proposed methods. Conclusions are drawn in the last section.

\section{Preliminaries}
In this section, we first recall several fundamental results on vector fields (cf. \cite{Girault-Raviart, Quartapelle}), and then derive identities for the nonlinear advection term that will be used in the subsequent analysis. Throughout this paper, we adopt standard notation for Sobolev spaces \cite{Sobolev}

Let $\Omega \subset \mathbb{R}^2$ be a bounded simply-connected domain with Lipschitz boundary. For $\psi \in \mathcal{D}'(\Omega)$ and $\bm{v} = [v_1, v_2]^{\rm T} \in [\mathcal{D}'(\Omega)]^2$, define
\[
	\Curl \psi:= \left[\frac{\partial \psi}{\partial y}, -\frac{\partial \psi}{\partial x}\right]^{\rm T},
	\quad 	
	\curl \bm{v}:= \frac{\partial v_2}{\partial x} - \frac{\partial v_1}{\partial y}.
\]

\begin{lemma}[Theorem 3.1 in \cite{Girault-Raviart}]\label{lem1}
A function $\bm{v} \in [L^2(\Omega)]^2$ satisfies
\[
	\dive \bm{v} = 0, \ \mbox{and} \ \int_{\partial\Omega} \bm{v}\cdot\bm{n} \ds =0,
\]
if and only if there exists a stream function $\psi \in H^1(\Omega)$ such that
\[
	\bm{v} = \Curl \psi.
\]
Here and below, $\bm{n}$ indicates the unit outward normal vector on the boundary.
\end{lemma}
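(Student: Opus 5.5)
The plan is to prove the two implications separately. The ``if'' direction is a distributional computation plus a density argument. The ``only if'' direction needs a construction, and I would obtain it by extending $\bm{v}$ by zero to a large ball, where a global stream function is easy to produce.

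\emph{Sufficiency.} Suppose $\bm{v}=\Curl\psi$ with $\psi\in H^1(\Omega)$. Then $\bm{v}\in[L^2(\Omega)]^2$. In $\mathcal{D}'(\Omega)$ we have
\[
\dive\bm{v}=\partial_x\partial_y\psi-\partial_y\partial_x\psi=0,
\]
because distributional derivatives commute. Since $\bm{v}\in H(\dive;\Omega)$, the normal trace $\bm{v}\cdot\bm{n}\in H^{-1/2}(\Gamma)$ is well defined, and $\int_{\partial\Omega}\bm{v}\cdot\bm{n}\ds$ is understood as the pairing $\langle\bm{v}\cdot\bm{n},1\rangle$. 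Green's formula for $H(\dive)$ with test function $1\in H^1(\Omega)$ gives
\[
\langle\bm{v}\cdot\bm{n},1\rangle=\int_\Omega\dive\bm{v}\dx=0.
\]
Equivalently, $\Curl\psi\cdot\bm{n}$ equals the tangential derivative of $\psi$, which integrates to zero over each closed boundary curve. One can check this first for smooth $\psi$ and then pass to the limit by density.

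\emph{Necessity, step 1 (extension).} Let $\dive\bm{v}=0$ with zero total flux, and pick an open ball $\mathcal{O}$ with $\overline\Omega\subset\mathcal{O}$. The hard case is when the normal trace $\bm{v}\cdot\bm{n}$ is not zero. To handle it, I would solve a Neumann problem in the exterior annular region $\mathcal{O}\setminus\overline\Omega$: find $\chi\in H^1$ with $\Delta\chi=0$, $\partial_n\chi=\bm{v}\cdot\bm{n}$ on $\Gamma$ and $\partial_n\chi=0$ on $\partial\mathcal{O}$. The compatibility condition for this problem is exactly the zero-flux hypothesis; this is the only place that hypothesis enters. Define $\tilde{\bm{v}}=\bm{v}$ in $\Omega$ and $\tilde{\bm{v}}=\nabla\chi$ in $\mathcal{O}\setminus\overline\Omega$. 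Normal traces then match across $\Gamma$, so $\tilde{\bm{v}}\in[L^2(\mathcal{O})]^2$ with $\dive\tilde{\bm{v}}=0$ in $\mathcal{O}$, and $\tilde{\bm{v}}\cdot\bm{n}=0$ on $\partial\mathcal{O}$. Extending by zero outside $\mathcal{O}$ gives a divergence-free field $\tilde{\bm{v}}\in[L^2(\mathbb{R}^2)]^2$ with compact support.

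\emph{Necessity, step 2 (global stream function).} Take the Fourier transform. The divergence-free condition reads $\xi\cdot\hat{\tilde{\bm{v}}}(\xi)=0$, so $\hat{\tilde{\bm{v}}}$ is parallel to $(\xi_2,-\xi_1)$. This lets me define
\[
\hat\psi=\frac{\hat{\tilde{\bm{v}}}\cdot(\xi_2,-\xi_1)}{i|\xi|^2}.
\]
Alternatively, set $\psi=\Delta^{-1}\curl\tilde{\bm{v}}$ via the Newtonian potential. In either case $\Curl\psi=\tilde{\bm{v}}$ and $\nabla\psi\in L^2(\mathbb{R}^2)$. Hence $\psi\in H^1_{\rm loc}(\mathbb{R}^2)$, and restricting to $\Omega$ gives $\psi\in H^1(\Omega)$ with $\Curl\psi=\bm{v}$. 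The low-frequency behaviour of $\hat\psi$ is the only place that needs care, but only local $H^1$ is required, so it causes no trouble.

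I expect step 1 to be the main obstacle. It needs solvability of the Neumann problem on the Lipschitz annular region, and it needs the justification that a field glued together with matching $H^{-1/2}$ normal traces has zero distributional divergence. Both are standard consequences of the $H(\dive)$ Green formula, but they are where the Lipschitz boundary and the flux condition are actually used. Simple connectivity is not needed in this argument. It matters for uniqueness of $\psi$ up to a constant, and for the variant in which the flux vanishes on each boundary component separately.
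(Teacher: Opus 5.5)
The paper gives no proof of this lemma; it simply cites Girault--Raviart. Your argument is essentially the standard one behind that citation: extend $\bm v$ to a divergence-free field on a ball $\mathcal O$ via a Neumann problem in $\mathcal O\setminus\overline\Omega$, whose compatibility condition is the flux condition, then build the stream function from the extended field. For simply connected $\Omega$, as the paper assumes, your proof is correct.

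What is wrong is your closing claim that simple connectivity is not needed.
\begin{itemize}
\item Your own sufficiency computation gives $\langle\bm v\cdot\bm n,1\rangle_\Gamma=\int_\Omega\dive\bm v\dx=0$ for \emph{every} divergence-free $\bm v\in[L^2(\Omega)]^2$. So the zero-flux hypothesis carries no information.
\item If simple connectivity could also be dropped, every divergence-free $L^2$ field on every bounded planar Lipschitz domain would have a stream function. This fails on the annulus $1<|\bm x|<2$. There $\bm v=\bm x/|\bm x|^2$ is divergence-free with zero total flux, but $\Curl\psi=\bm v$ would force $\nabla\psi=(-x_2,x_1)/|\bm x|^2$, whose circulation around $|\bm x|=3/2$ is $2\pi$.
\item The topology enters silently in step~1. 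You solve one Neumann problem on ``the annular region'' $\mathcal O\setminus\overline\Omega$, which is connected only because $\Gamma$ is a single closed curve.
\item If $\Omega$ has holes $\Omega_1,\dots,\Omega_p$, test the weak Neumann problem with the indicator of $\Omega_i$. This shows that $\langle\bm v\cdot\bm n,1\rangle_{\Gamma_i}=0$ is necessary on each inner boundary curve $\Gamma_i$, and that does not follow from $\dive\bm v=0$.
\item This is why the cited theorem imposes the flux condition on each boundary component; the single integral in the paper is the one-component case.
\item Uniqueness of $\psi$ up to a constant, by contrast, only requires $\Omega$ to be connected.
\end{itemize}

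Two smaller points in step~2.
\begin{itemize}
\item With the paper's conventions, $\curl\Curl\psi=-\Delta\psi$; this is how the paper obtains $\nu\Delta^2\phi=\curl\bm f$. So the Newtonian-potential variant should read $-\Delta\psi=\curl\tilde{\bm v}$, not $\psi=\Delta^{-1}\curl\tilde{\bm v}$.
\item The low-frequency issue can be settled outright. Test $\dive\tilde{\bm v}=0$ against $x_j\eta$, with $\eta\in C_c^\infty(\mathbb R^2)$ equal to $1$ near $\overline{\mathcal O}$. This gives $\int_{\mathbb R^2}\tilde{\bm v}\dx=\bm 0$, i.e.\ $\hat{\tilde{\bm v}}(0)=\bm 0$.
\item Since $\hat{\tilde{\bm v}}$ is smooth (compact support), $\hat\psi$ is bounded near $0$, so $\psi\in H^1(\mathbb R^2)$ directly. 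You then no longer need the true but unproved fact that a distribution with $L^2$ gradient is locally in $L^2$.
\end{itemize}
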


Let $\Omega \subset\mathbb{R}^3$ be a bounded simply-connected region with Lipschitz boundary. Define the curl operator for distribution $\bm{v} = [v_1, v_2, v_3]^{\rm T} \in [\mathcal{D}'(\Omega)]^3$ by
\[
	\Curl \bm{v}:= \nabla \times \bm{v} = \left[\frac{\partial v_3}{\partial y} - \frac{\partial v_2}{\partial z}, \frac{\partial v_1}{\partial z} - \frac{\partial v_3}{\partial x}, \frac{\partial v_2}{\partial x} - \frac{\partial v_1}{\partial y} \right]^{\rm T}.
\]

\begin{lemma}[Theorems 3.4 and 3.5 in \cite{Girault-Raviart}]\label{lem2}
A vector field $\bm{v} \in [L^2(\Omega)]^3$ satisfies
\[
	\dive \bm{v} = 0, \ \mbox{and} \ \int_{\partial\Omega} \bm{v}\cdot\bm{n} \ds =0,
\]
if and only if there exists a potential function $\bm{\psi} \in [H^1(\Omega)]^3$ such that
\[
	\bm{v} = \Curl \bm{\psi}, \quad \dive \bm{\psi} = 0.
\]
Furthermore, we can choose $\bm{\psi}\in H(\Curl; \Omega)$ satisfies
\[
	\bm{\psi} \cdot \bm{n} = 0 \quad \text{on}\ \Gamma.
\]
\end{lemma}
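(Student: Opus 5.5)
The sufficiency direction I would dispose of first, since it is elementary. If $\bm{v} = \Curl \bm{\psi}$ with $\bm{\psi} \in [H^1(\Omega)]^3$, then $\dive \Curl \bm{\psi} = 0$ holds in the sense of distributions, because mixed partial derivatives commute. Next, since $\bm{v} \in H(\dive;\Omega)$ with $\dive\bm{v}=0$, the normal trace $\bm{v}\cdot\bm{n} \in H^{-1/2}(\Gamma)$ is well defined. Green's formula tested against the constant function $1$ then gives
\[
\langle \bm{v}\cdot\bm{n}, 1\rangle_\Gamma = \int_\Omega \dive \bm{v} \dx = 0 .
\]

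The necessity direction is the real content, and I would prove it by extension to a ball. Let $\bm{v} \in [L^2(\Omega)]^3$ with $\dive \bm{v}=0$ and zero total flux.

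\textbf{Step 1: divergence-free extension.} Choose an open ball $\mathcal{O} \supset \bar\Omega$. Since $\Omega$ is simply connected with Lipschitz boundary, I take $\Gamma$ connected, so $\mathcal{O}\setminus\bar\Omega$ is connected. Solve the Neumann problem
\[
\Delta \chi = 0 \ \text{in } \mathcal{O}\setminus\bar\Omega, \qquad \partial_n\chi = \bm{v}\cdot\bm{n} \ \text{on } \Gamma, \qquad \partial_n\chi = 0 \ \text{on } \partial\mathcal{O}.
\]
Its compatibility condition is exactly the zero-flux hypothesis. Set $\tilde{\bm{v}} = \bm{v}$ in $\Omega$ and $\tilde{\bm{v}} = \nabla\chi$ outside. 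Because the normal traces match across $\Gamma$, we get $\tilde{\bm{v}} \in [L^2(\mathcal{O})]^3$ with $\dive\tilde{\bm{v}}=0$ on all of $\mathcal{O}$ and $\tilde{\bm{v}}\cdot\bm{n}=0$ on $\partial\mathcal{O}$. Extending by zero then yields a divergence-free field on $\mathbb{R}^3$ with compact support.

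\textbf{Step 2: potential in $\mathbb{R}^3$.} Let $\bm{w}$ be the Newtonian potential of this field componentwise, so that $-\Delta \bm{w} = \tilde{\bm{v}}$. Then $\bm{w} \in [H^2_{loc}]^3$, and $\dive\bm{w}=0$ because $\dive$ commutes with convolution and $\dive\tilde{\bm{v}}=0$. Define $\bm{\psi}_0 = \Curl \bm{w} \in [H^1_{loc}]^3$. The identity $\Curl\Curl = -\Delta + \nabla\dive$ gives
\[
\Curl\bm{\psi}_0 = -\Delta\bm{w} = \tilde{\bm{v}}, \qquad \dive\bm{\psi}_0 = 0 .
\]
Restricting to $\Omega$ proves the first assertion with $\bm{\psi} = \bm{\psi}_0|_\Omega \in [H^1(\Omega)]^3$.

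\textbf{Step 3: gauge correction for the boundary condition.} Let $q \in H^1(\Omega)$ solve
\[
\Delta q = 0 \ \text{in } \Omega, \qquad \partial_n q = \bm{\psi}_0\cdot\bm{n} \ \text{on } \Gamma .
\]
This is solvable because $\langle \bm{\psi}_0\cdot\bm{n},1\rangle_\Gamma = \int_\Omega \dive\bm{\psi}_0 \dx = 0$. Set $\bm{\psi} = \bm{\psi}_0 - \nabla q$. Then $\Curl\bm{\psi} = \bm{v}$, $\dive\bm{\psi} = 0$ and $\bm{\psi}\cdot\bm{n} = 0$ on $\Gamma$. Moreover $\bm{\psi} \in H(\Curl;\Omega)\cap H(\dive;\Omega)$. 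This is all the last claim requires; full $H^1$ regularity of this particular gauge would need extra smoothness or convexity of $\Omega$, which is why the statement only asks for $H(\Curl;\Omega)$.

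\textbf{Main obstacle.} The delicate step is Step 1 for a general Lipschitz $\Omega$. If $\Gamma$ has several components $\Gamma_i$, the exterior region is disconnected. The Neumann compatibility then requires zero flux through each $\Gamma_i$, which is stronger than the single global condition in the statement. I would handle this by reading ``simply connected'' together with the intended setting of Theorems 3.4–3.5 in \cite{Girault-Raviart} as implying that $\Gamma$ is connected. Otherwise I would note that per-component zero flux is needed. The trace and Green-formula technicalities in $H^{-1/2}(\Gamma)$ are standard.
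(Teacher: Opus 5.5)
Your proof is correct. It is essentially the Girault--Raviart argument behind Theorems 3.4--3.5, which the paper cites without reproducing: extend $\bm v$ divergence-free to a ball via an exterior Neumann problem, build a whole-space potential (they use the Fourier transform, which amounts to your Newtonian-potential construction), then correct the gauge by $\bm\psi_0-\nabla q$ with an interior Neumann problem. Your caveat is also right and worth keeping: a simply connected domain in $\mathbb{R}^3$ can have disconnected boundary (e.g.\ the shell $1<|\bm x|<2$ with $\bm v=\bm x/|\bm x|^3$, which is divergence-free with zero total flux but is not a curl), so the statement as printed needs either a connected $\Gamma$ or zero flux through each boundary component, which is exactly what the exterior-extension step uses.
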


\begin{lemma}[cf. \cite{Quartapelle,Girault-Raviart}]
For any vector function $\bm{v}, \bm{\psi} \in [C^1(\Omega)]^3$, there hold
\begin{align}
&(\bm{v}\cdot\nabla)\bm{v} = \frac{1}{2}\nabla |\bm{v}|^2-\bm {v}\times(\nabla\times\bm{v}), \label{idt-1}
\\
& \nabla\times( \bm{\psi} \times \bm{v})	
=(\nabla \cdot \bm{v})\bm{\psi} - (\nabla \cdot \bm{\psi})\bm{v} + (\bm{v} \cdot \nabla) \bm{\psi}   - (\bm{\psi} \cdot \nabla) \bm{v}. \label{idt-2}
\end{align}
Moreover, if $\bm{v}\in [C^2(\Omega)]^3$, then
\begin{equation}\label{idt-3}
\nabla \times (\nabla \times \bm{v}) = \nabla(\nabla\cdot\bm{v}) - \Delta \bm{v}.
\end{equation}
\end{lemma}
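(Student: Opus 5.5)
These are pointwise identities, so I will prove each one by a direct componentwise computation in Cartesian coordinates. I will use the Levi-Civita symbol $\varepsilon_{ijk}$ and the summation convention. Write $(\bm{a}\times\bm{b})_i=\varepsilon_{ijk}a_jb_k$ and $(\nabla\times\bm{a})_i=\varepsilon_{ijk}\partial_j a_k$. The one tool is the contraction identity
\[
\varepsilon_{ijk}\varepsilon_{ilm}=\delta_{jl}\delta_{km}-\delta_{jm}\delta_{kl},
\]
which can be checked by inspecting the finitely many index cases. The regularity $C^1$ (resp.\ $C^2$) is exactly what makes every derivative appearing below classical.

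For \eqref{idt-1}, I will compute the $i$-th component of $\bm{v}\times(\nabla\times\bm{v})$:
\[
\varepsilon_{ijk}v_j\varepsilon_{klm}\partial_l v_m .
\]
Using $\varepsilon_{ijk}=\varepsilon_{kij}$ and the contraction identity, this equals $v_j\partial_i v_j - v_j\partial_j v_i$. The first term is $\tfrac12\partial_i|\bm{v}|^2$ by the product rule, and the second is $((\bm{v}\cdot\nabla)\bm{v})_i$. Rearranging gives \eqref{idt-1}.

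For \eqref{idt-2}, the $i$-th component of $\nabla\times(\bm{\psi}\times\bm{v})$ is
\[
\varepsilon_{ijk}\partial_j(\varepsilon_{klm}\psi_l v_m)=(\delta_{il}\delta_{jm}-\delta_{im}\delta_{jl})\partial_j(\psi_l v_m)=\partial_j(\psi_i v_j)-\partial_j(\psi_j v_i).
\]
Expanding by the product rule gives
\[
v_j\partial_j\psi_i+\psi_i\partial_j v_j-v_i\partial_j\psi_j-\psi_j\partial_j v_i .
\]
These four terms are, in order, the $i$-th components of $(\bm{v}\cdot\nabla)\bm{\psi}$, $(\nabla\cdot\bm{v})\bm{\psi}$, $-(\nabla\cdot\bm{\psi})\bm{v}$ and $-(\bm{\psi}\cdot\nabla)\bm{v}$, which is \eqref{idt-2}.

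For \eqref{idt-3}, the same contraction gives
\[
(\nabla\times(\nabla\times\bm{v}))_i=\varepsilon_{ijk}\partial_j\varepsilon_{klm}\partial_l v_m=\partial_i\partial_m v_m-\partial_j\partial_j v_i .
\]
This is $\partial_i(\nabla\cdot\bm{v})-\Delta v_i$. Here $C^2$ regularity is needed so that the mixed second derivatives exist and commute (Schwarz), giving $\partial_j\partial_i v_j=\partial_i\partial_j v_j$.

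None of these steps poses a real obstacle. The only point needing care is the index bookkeeping when applying the contraction identity: the repeated index must be cyclically moved to the first slot before contracting. Since the computation is entirely local, it applies at every point of $\Omega$ and no boundary considerations arise.
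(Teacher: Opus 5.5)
Your proposal is correct: the $\varepsilon$--$\delta$ contractions for all three identities check out, and you rightly note that $C^1$ suffices for the first two while the third needs $C^2$ so that $\partial_j\partial_i v_j=\partial_i\partial_j v_j$. The paper gives no proof of its own and only cites these as standard vector-calculus identities from Quartapelle and Girault--Raviart; a direct componentwise verification like yours is the usual way to establish them, so nothing needs to be added.
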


With the above results in mind, we can obtain the following two identities. 

\begin{theorem}\label{thm1}
Let $\Omega \subset \mathbb{R}^2$ be a bounded domain with Lipschitz boundary and $\phi \in C^3(\Omega)$ a stream function with $\bm{u}=\Curl\phi$. Then there holds
\begin{equation}\label{NS2d-temp}
\curl\bigl((\bm{u}\cdot \nabla)\bm{u}\bigr)
=-(\Curl\phi\cdot\nabla)\Delta\phi.
\end{equation}
\end{theorem}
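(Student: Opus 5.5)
We plan to embed the two-dimensional field into $\mathbb{R}^3$ and apply the identities \eqref{idt-1} and \eqref{idt-3}. Set $\bm{U}=[u_1,u_2,0]^{\rm T}$, with $u_1=\partial_y\phi$ and $u_2=-\partial_x\phi$, all functions independent of $z$. Then $\nabla\times\bm{U}=[0,0,\omega]^{\rm T}$, where
\[
\omega=\curl\bm{u}=\partial_x u_2-\partial_y u_1=-\Delta\phi .
\]
Also $\nabla\cdot\bm{U}=\dive\bm{u}=\partial_{xy}\phi-\partial_{yx}\phi=0$, since $\phi\in C^3(\Omega)$ allows mixed derivatives to commute. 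For the first two components of $\bm{U}\times(\nabla\times\bm{U})$ we get $[u_2\omega,-u_1\omega]^{\rm T}$. By \eqref{idt-1}, restricted to the planar components,
\[
(\bm{u}\cdot\nabla)\bm{u}=\tfrac12\nabla|\bm{u}|^2-[u_2\omega,\,-u_1\omega]^{\rm T}.
\]

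Next we apply the 2D operator $\curl$ to both sides. The gradient term drops out, because $\curl\nabla g=\partial_x\partial_y g-\partial_y\partial_x g=0$ for $g=\tfrac12|\bm{u}|^2\in C^2$; this holds since $\bm{u}\in C^2$. For the remaining term we compute
\[
\curl[u_2\omega,-u_1\omega]^{\rm T}=\partial_x(-u_1\omega)-\partial_y(u_2\omega)=-(\partial_xu_1+\partial_yu_2)\,\omega-(u_1\partial_x\omega+u_2\partial_y\omega).
\]
The first bracket vanishes by incompressibility, so $\curl\bigl((\bm{u}\cdot\nabla)\bm{u}\bigr)=(\bm{u}\cdot\nabla)\omega$. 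Substituting $\omega=-\Delta\phi$ and $\bm{u}=\Curl\phi$ gives $-(\Curl\phi\cdot\nabla)\Delta\phi$, which is \eqref{NS2d-temp}.

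One point needs care: the regularity bookkeeping. The product rule applied to $u_i\omega$ requires $\omega\in C^1$, which means $\phi\in C^3$, exactly as assumed. The term $\nabla|\bm{u}|^2$ is only $C^1$, but the cancellation $\curl\nabla g=0$ needs only $g\in C^2$, and that holds.

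As a cross-check, or as an alternative avoiding \eqref{idt-1}, one can expand directly. Write $\curl\bigl((\bm{u}\cdot\nabla)\bm{u}\bigr)=\partial_x(u_1\partial_xu_2+u_2\partial_yu_2)-\partial_y(u_1\partial_xu_1+u_2\partial_yu_1)$. The terms in which the derivative falls on the transport coefficients combine to $(\partial_xu_1+\partial_yu_2)\,\omega$, which vanishes. The remaining terms give $(\bm{u}\cdot\nabla)\omega$.

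The main risk is a sign error in the cross product or in $\omega=-\Delta\phi$. I would verify these signs explicitly: with the paper's convention $\Curl\phi=[\phi_y,-\phi_x]^{\rm T}$ we get $\partial_xu_2-\partial_yu_1=-\phi_{xx}-\phi_{yy}$, confirming $\omega=-\Delta\phi$.
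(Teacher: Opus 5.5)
Your argument is correct, and the signs you flagged check out. The planar part of $\bm{U}\times(\nabla\times\bm{U})$ is $[u_2\omega,-u_1\omega]^{\rm T}$. The minus sign in \eqref{idt-1} turns $-(\dive\bm{u})\,\omega-(\bm{u}\cdot\nabla)\omega$ into $(\bm{u}\cdot\nabla)\omega$, and $\omega=-\Delta\phi$.

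Your main route is not the one the paper takes in two dimensions. The paper never invokes \eqref{idt-1} there. It expands $\partial_x(u_1\partial_xu_2+u_2\partial_yu_2)-\partial_y(u_1\partial_xu_1+u_2\partial_yu_1)$ by the product rule, collects the products of first derivatives into $(\dive\bm{u})\,\curl\bm{u}$, and drops it. It then reads the remainder as $(\bm{u}\cdot\nabla)\curl\bm{u}$ and substitutes $\bm{u}=\Curl\phi$. That is precisely your ``cross-check''.

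Your primary argument is instead the planar version of the rotational (Lamb-vector) form that the paper uses only for Theorem~\ref{thm2}. The gradient of $\tfrac12|\bm{u}|^2$ is annihilated structurally by $\curl\nabla=0$, rather than by term-by-term cancellation. Computing the 2D curl of $[u_2\omega,-u_1\omega]^{\rm T}$ directly also lets you skip \eqref{idt-2}. This makes the link with the 3D identity transparent: for the $z$-independent extension, the stretching term in \eqref{NS3d-nonlinear} becomes $(\bm{\omega}\cdot\nabla)\bm{U}=\omega\,\partial_z\bm{U}=\bm{0}$.

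The cost is leaving the plane. \eqref{idt-1} is stated for fields on a 3D domain, so you are applying it to $\bm{U}$ on $\Omega\times\mathbb{R}$. That is legitimate because it is a pointwise algebraic identity in first derivatives. The paper's expansion, by contrast, is self-contained and needs no vector identities.

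Your regularity bookkeeping ($\omega\in C^1$, $\tfrac12|\bm{u}|^2\in C^2$) is, if anything, more careful than the paper's.
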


\begin{proof}
%According to the definition of $\Curl\phi$, a direct computation gives
%\[
%\dive \bm{u} = \nabla \cdot (\Curl\phi) = 0.
%\]
By the definition of the curl operator and the chain rule, we have
\begin{align*}
\curl\big((\bm{u}\cdot \nabla) \bm{u}\big)
&= \frac{\partial}{\partial x}\left( u_1\frac{\partial u_2}{\partial x} + u_2\frac{\partial u_2}{\partial y} \right) - \frac{\partial}{\partial y}\left(u_1\frac{\partial u_1}{\partial x} + u_2\frac{\partial u_1}{\partial y} \right)
\\
% &=u_1\frac{\partial^2 u_2}{\partial x^2} + \frac{\partial u_1}{\partial x}\frac{\partial u_2}{\partial x} + u_2\frac{\partial^2 u_2}{\partial x\partial y} + \frac{\partial u_2}{\partial x}\frac{\partial u_2}{\partial x}
% \\
% &\quad - u_1\frac{\partial^2 u_1}{\partial y\partial x} - \frac{\partial u_1}{\partial y}\frac{\partial u_1}{\partial x} - u_2\frac{\partial^2 u_1}{\partial y^2} -\frac{\partial u_2}{\partial y}\frac{\partial u_1}{\partial y}
% \\
&= u_1 \left( \frac{\partial^2 u_2}{\partial x^2} - \frac{\partial^2 u_1}{\partial y\partial x} \right) + u_2 \left( \frac{\partial^2 u_2}{\partial x\partial y} - \frac{\partial^2 u_1}{\partial y^2} \right) + \left( \frac{\partial u_1}{\partial x} + \frac{\partial u_2}{\partial y} \right)\left( \frac{\partial u_2}{\partial x} - \frac{\partial u_1}{\partial y} \right)
\\
&=u_1 \left( \frac{\partial^2 u_2}{\partial x^2} - \frac{\partial^2 u_1}{\partial x\partial y} \right) + u_2 \left( \frac{\partial^2 u_2}{\partial y\partial x} - \frac{\partial^2 u_1}{\partial y^2} \right) + \dive \bm{u}\cdot \Curl \bm{v},
\end{align*}
which combined with $\dive \bm{u}=0$ implies
\[
\curl\big((\bm{u}\cdot \nabla) \bm{u}\big)
= u_1 \frac{\partial}{\partial x}\left( \frac{\partial u_2}{\partial x} - \frac{\partial u_1}{\partial y} \right) + u_2\frac{\partial}{\partial y} \left( \frac{\partial u_2}{\partial x} - \frac{\partial u_1}{\partial y} \right).
\]
Finally, substituting $\bm{u} = \Curl \phi$ into above expression yields
\begin{align*}
\curl\big((\bm{u}\cdot \nabla) \bm{u}\big)
&=  u_1 \frac{\partial}{\partial x}\left( -\frac{\partial^2 \phi}{\partial x^2} - \frac{\partial^2 \phi}{\partial y^2} \right) + u_2\frac{\partial}{\partial y} \left( -\frac{\partial^2 \phi}{\partial x^2} - \frac{\partial \phi}{\partial y^2} \right) \\
&= - (\bm{u}\cdot \nabla) \Delta \phi
 = -(\Curl \phi \cdot \nabla)\Delta \phi.
\end{align*}
\end{proof}

\begin{theorem}\label{thm2}
Let $\Omega\subset\mathbb{R}^3$ be a bounded domain with Lipschitz boundary and $\bm{\phi}\in [C^3(\Omega)]^3$ a potential vector such that $\bm{u} = \Curl \bm{\phi}$ and $\dive \bm{\phi} = 0$.  Then there holds
\begin{equation}\label{NS3d-temp}
\Curl\big((\bm{u}\cdot \nabla) \bm{u} \big) = (\Delta \bm{\phi} \cdot \nabla) \Curl \bm{\phi} - (\Curl \bm{\phi} \cdot \nabla)\Delta \bm{\phi}.
\end{equation}
\end{theorem}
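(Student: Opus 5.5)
We rely on the vector identities \eqref{idt-1}--\eqref{idt-3}. Since $\bm{\phi}\in[C^3(\Omega)]^3$, the field $\bm{u}=\Curl\bm{\phi}$ lies in $[C^2(\Omega)]^3$. This is enough regularity for every identity below to hold pointwise.

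\textbf{Step 1: vorticity in terms of $\bm{\phi}$.} Applying \eqref{idt-3} to $\bm{\phi}$ and using $\dive\bm{\phi}=0$ gives
\[
\nabla\times\bm{u}=\nabla\times(\nabla\times\bm{\phi})=\nabla(\nabla\cdot\bm{\phi})-\Delta\bm{\phi}=-\Delta\bm{\phi}.
\]

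\textbf{Step 2: remove the gradient part of the advection term.} By \eqref{idt-1},
\[
(\bm{u}\cdot\nabla)\bm{u}=\tfrac12\nabla|\bm{u}|^2-\bm{u}\times(\nabla\times\bm{u}).
\]
We then take the curl of both sides. The curl of a gradient vanishes, and $|\bm{u}|^2\in C^2$ makes this legitimate. Substituting Step 1 yields
\[
\Curl\big((\bm{u}\cdot\nabla)\bm{u}\big)=-\nabla\times\big(\bm{u}\times(-\Delta\bm{\phi})\big)=\nabla\times\big(\bm{u}\times\Delta\bm{\phi}\big)=-\nabla\times\big(\Delta\bm{\phi}\times\bm{u}\big).
\]
The last equality is just antisymmetry of the cross product.

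\textbf{Step 3: expand with \eqref{idt-2}.} We apply \eqref{idt-2} with $\bm{\psi}$ replaced by $\Delta\bm{\phi}$ and $\bm{v}$ replaced by $\bm{u}$:
\[
\nabla\times(\Delta\bm{\phi}\times\bm{u})=(\nabla\cdot\bm{u})\Delta\bm{\phi}-(\nabla\cdot\Delta\bm{\phi})\bm{u}+(\bm{u}\cdot\nabla)\Delta\bm{\phi}-(\Delta\bm{\phi}\cdot\nabla)\bm{u}.
\]
The first scalar factor is $\nabla\cdot\bm{u}=\dive\Curl\bm{\phi}=0$. The second is $\nabla\cdot\Delta\bm{\phi}=\Delta(\dive\bm{\phi})=0$, where interchanging the derivatives is valid because $\bm{\phi}\in C^3$. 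Both of these terms therefore drop out. Changing the sign and substituting $\bm{u}=\Curl\bm{\phi}$ gives exactly \eqref{NS3d-temp}.

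\textbf{Main obstacle.} The only delicate point is regularity. Identity \eqref{idt-2} requires $C^1$ arguments, and $\Delta\bm{\phi}$ is only $C^1$ when $\bm{\phi}\in C^3$, so this is exactly the available smoothness. Likewise, the claim $\nabla\cdot\Delta\bm{\phi}=0$ needs the commutation of third derivatives, which is again covered by $\bm{\phi}\in C^3$. The rest is sign bookkeeping in the cross-product manipulations.
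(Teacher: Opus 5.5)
Your proof is correct and follows the paper's argument: use \eqref{idt-1} to drop the gradient term, expand the curl of the cross product with \eqref{idt-2}, and identify the vorticity as $-\Delta\bm{\phi}$ via \eqref{idt-3}. The only difference is that you substitute $\bm{\omega}=-\Delta\bm{\phi}$ before applying \eqref{idt-2}, so you remove the $(\nabla\cdot\Delta\bm{\phi})\bm{u}$ term using $\Delta(\dive\bm{\phi})=0$ rather than $\dive\Curl\bm{u}=0$; this is a harmless reordering, and your regularity bookkeeping is sound.
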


\begin{proof}
Write $\bm{\omega} = \Curl \bm{u}$, meaning the vorticity of $ \bm{u}$. By the identity \eqref{idt-1},
\[
	\Curl\big((\bm u\cdot\nabla)\bm u \big) = \frac{1}{2}\nabla\times \big(\nabla |\bm u|^2\big) -\nabla \times \big(\bm u\times(\nabla\times\bm u)\big ).
\]
Noting that $\nabla \times \nabla f = 0$ for any scale function $f$, we see that
\begin{equation}\label{expension}
\Curl\big((\bm{u}\cdot \nabla) \bm{u} \big)
= -\nabla \times (\bm{u}\times\bm{w})
= \nabla\times( \bm{\omega} \times \bm{u}),
\end{equation}
which combined with \eqref{idt-2} yields
\[
\nabla\times( \bm{\omega} \times \bm{u})	
=
(\bm{u} \cdot \nabla) \bm{\omega} - (\bm{\omega} \cdot \nabla) \bm{u} + (\nabla \cdot \bm{u})\bm{\omega} - (\nabla \cdot \bm{\omega})\bm{u} .
\]
Since $\dive \bm{u} = 0$ and $\nabla\cdot\nabla\times\bm{v} = 0$ for any vector field $\bm v$, we can rewrite \eqref{expension}  as
\begin{equation}\label{NS3d-nonlinear}
	\Curl\big((\bm{u}\cdot \nabla) \bm{u} \big) = (\bm{u} \cdot \nabla) \bm{\omega} - (\bm{\omega} \cdot \nabla) \bm{u}.
\end{equation}
Using \eqref{idt-3}, we obtain
\begin{equation}\label{vorticity}
\bm{\omega} = \Curl \bm{u} = \Curl (\Curl \bm{\phi}) = \nabla(\nabla \cdot \bm{\phi}) - \Delta\bm{\phi} = - \Delta\bm{\phi}.
\end{equation}
Substituting \eqref{vorticity} into \eqref{NS3d-nonlinear} gives
\begin{equation*}
	\Curl\big((\bm{u}\cdot \nabla) \bm{u} \big) = -(\bm{u} \cdot \nabla)\Delta \bm{\phi} + (\Delta \bm{\phi} \cdot \nabla) \bm{u} =  (\Delta \bm{\phi} \cdot \nabla) \Curl \bm{\phi} - (\Curl \bm{\phi} \cdot \nabla)\Delta \bm{\phi}.
\end{equation*}
\end{proof}

\section{Decoupled divergence-free formulations for incompressible flows}
In this section, we derive the decoupled divergence-free formulas for the steady Stoke and the Navier-Stokes equations in $\mathbb{R}^2$ and $\mathbb{R}^3$. The results in three dimensions differ from the classical vorticity-velocity (or vorticity-vector potential) formulations usually used in the literature.

\subsection{The 2D case: stream function representation}
We first consider the two-dimensional Stokes equations \eqref{SE-a}--\eqref{SE-c}. By the divergence theorem and the incompressibility constraint, it follows that
\[
	\int_{\Gamma} \bm{u} \cdot \bm{n} \ds = \int_{\Omega} \dive \bm{u} \dx = 0.
\]
This identity implies that for any divergence-free velocity field $\bm{u} \in [H^1(\Omega)]^2$, there exists a stream function $\phi \in H^2(\Omega)$ such that $\bm{u}$ can be represented via $\phi$ (cf. Lemma \ref{lem1}).

To derive a decoupled, divergence-free formulation, we first reformulate the governing equations in terms of $\phi$. Suppose $\bm{u} \in [H^3(\Omega)]^2$ and $\phi \in H^4(\Omega)$. Then applying the curl operator to the left-hand side of \eqref{SE-a} yields
\[
\curl (-\nu \Delta \Curl \phi + \nabla p)
= -\nu \Delta \curl \Curl \phi + \curl\nabla p
=\nu\Delta^2 \phi.
\]
Moreover, a direct computation confirms that $\dive \Curl \phi = 0$. Consequently, \eqref{SE-a}--\eqref{SE-b} can be transformed into the following decoupled biharmonic equation
\begin{equation*}
\nu\Delta^2 \phi = \curl \bm{f} \quad \text{in}\ \Omega.
\end{equation*}
Concerning the boundary condition \eqref{SE-c}, the no-slip condition $\bm{u} = 0$ implies
\[\begin{cases}
\bm{u} \cdot \bm{n}_0 = \curl \phi \cdot \bm{n}_0 = \nabla\phi \cdot \bm{\tau}_0 = 0,
\\
\bm{u} \cdot \bm{\tau}_0 = \curl \phi \cdot \bm{\tau}_0 = -\nabla\phi \cdot \bm{n}_0 =0,
\end{cases}\]
where $\bm{n}_0$ and $\bm{\tau}_0$ denote the unit outward normal and unit tangent vector on $\Gamma$, respectively. In other words, we have
\begin{equation}\label{SE2d-temp}
\nabla \phi = \bm{0} \quad \text{on}\ \Gamma.
\end{equation}
From \eqref{SE2d-temp}, we know $\phi$ is constant along the boundary. Since the velocity field depends only on the derivatives of $\phi$, we impose $\phi = 0$ on $\Gamma$ without loss of generality to ensure the uniqueness of the solution. So the decoupled subproblem for the velocity in terms of $\phi \in H_0^2(\Omega)$ reads
\begin{equation}\label{SE2d-vel}
\begin{cases}
\nu\Delta^2 \phi = \curl \bm{f} \quad &\text{in}\ \Omega,
\\
\phi = \frac{\partial \phi}{\partial n} = 0 \quad &\text{on}\ \Gamma.
\end{cases}
\end{equation}

\begin{remark}
If we consider the slip condition $\bm{u} = \bm{g}$ on the boundary $\Gamma$, a similar derivation gives
\[\begin{cases}
\bm{u} \cdot \bm{n}_0 = \nabla\phi \cdot \bm{\tau}_0 = \bm{g} \cdot \bm{n}_0,
\\
\bm{u} \cdot \bm{\tau}_0 = -\nabla\phi \cdot \bm{n}_0 =  \bm{g} \cdot \bm{\tau}_0,
\end{cases}\]
Thus, we can impose the constraint of $\phi$ on the boundary.
\end{remark}

Once the velocity $\bm{u} = \Curl \phi$ is determined, the pressure $p$ can be computed from
\begin{equation*}
\nabla p = \bm{f} + \nu \Delta \bm{u}.
\end{equation*}
To ensure a unique solution for the pressure, we impose an additional reference constraint
\begin{equation*}
	p(\bm{x}_0) = 0,
\end{equation*}
where $\bm{x}_0$ is any given point in the interior of $\Omega$. Thus, the decoupled subproblem for the pressure reads
\begin{equation}\label{SE-pre}
\begin{cases}
\nabla p = \bm{f} + \nu \Delta \bm{u}, \quad \text{in}\ \Omega,
\\
p(\bm{x}_0) = 0.
\end{cases}
\end{equation}

Next, we extend our framework to the steady Navier-Stokes equations \eqref{NS-a}--\eqref{NS-c}. As in the Stokes case, applying the curl operator to \eqref{NS-a} eliminates the pressure term. The only additional term is the nonlinear advection term. By Theorem~\ref{thm1}, its curl admits the representation \eqref{NS2d-temp}. Combing \eqref{SE2d-vel} and \eqref{NS2d-temp} yields the decoupled subproblem for the velocity
\begin{equation}\label{NS2d-vel}
\begin{cases}
\nu\Delta^2 \phi - (\Curl \phi \cdot \nabla)\Delta \phi = \curl \bm{f} \quad &\text{in}\ \Omega,
\\
\phi = \frac{\partial \phi}{\partial  n} = 0 \quad &\text{on}\ \Gamma.
\end{cases}
\end{equation}
Similarly, the decoupled subproblem for the pressure reads
\begin{equation}\label{NS-pre}
\begin{cases}
\nabla p = \bm{f} + \nu \Delta \bm{u} - (\bm{u}\cdot \nabla) \bm{u}, \quad \text{in}\ \Omega,
\\
p(\bm{x}_0) = 0.
\end{cases}
\end{equation}

Although formulation \eqref{NS2d-vel} has been studied in \cite{Guo-1997, Lequeurre-2020}, and Legendre spectral methods were further developed for two-dimensional Navier-Stokes problems in \cite{Guo-1998}, extending these techniques to the three-dimensional setting involves additional technical considerations and is not entirely straightforward. Moreover, \eqref{NS-pre} allows the recovery of a classical solution, in contrast to the weak solution frameworks commonly adopted in earlier works.

\subsection{The 3D case: vector potential representation}
It can be verified that the solution to \eqref{SE-a}--\eqref{SE-c} satisfies the assumptions of Lemma \ref{lem2} by the divergence theorem and the incompressibility constraint. Moreover, since $\Omega$ is a simply-connected domain, there exists a unique potential function $\bm{\phi} \in H(\Curl; \Omega)$ such that the velocity can be represented by $\bm{u} = \Curl \bm{\phi}$.
To ensure sufficient regularity for the derivation of the decoupled, divergence-free formulation, we assume that $\bm{u} \in [H^3(\Omega)]^3$ and $\bm{\phi} \in [H^4(\Omega)]^3$.

Taking the {\bf curl} of the left side of \eqref{SE-a} yields
\[
\Curl  (-\nu \Delta \Curl \bm{\phi} + \nabla p)
= -\nu \Delta \Curl \Curl \bm{\phi} + \Curl \nabla p
=-\nu \Delta (\nabla (\nabla \cdot \bm{\phi}) - \Delta \bm{\phi}).
\]
Noticing that $\bm{\phi}$ is divergence-free, the above expression further simplifies to
\begin{equation}\label{SE3d-temp}
	\nu\Delta^2\bm{\phi} = \Curl \bm{f}.
\end{equation}
A direct computation confirms that $\dive \Curl \bm{\phi} = 0$, so the incompressibility constraint is satisfied identically. Combing \eqref{SE3d-temp}, Lemma \ref{lem2} and the boundary condition \eqref{NS-c}, we obtain the following decoupled subproblem for the velocity, expressed in terms of $\bm{\phi}$
\begin{equation}\label{SE3d-vel}
\begin{cases}
\nu\Delta^2\bm{\phi} = \Curl \bm{f} \quad &\text{in}\ \Omega,
\\
\dive \bm{\phi} = 0 \quad &\text{in}\ \Omega,
\\
\bm{\phi} \cdot \bm{n} = 0 \quad &\text{on}\ \Gamma,
\\
\Curl \bm{\phi} = \bm{0} \quad &\text{on}\ \Gamma.
\end{cases}
\end{equation}

After the determination of the velocity $\bm{u} = \Curl \bm{\phi}$, the pressure $p$ can be recovered by following the same procedure as in the two-dimensional case, namely by solving subproblem \eqref{SE-pre}. For brevity, the details are omitted here.

\begin{remark}
For slip boundary condition $\bm{u} = \bm{g}$ on $\Gamma$, one simply replaces the boundary constraint with $\bm{\phi} \cdot \bm{n} = 0$ and $\Curl \bm{\phi} = \bm{g}$.
\end{remark}

We now extend our framework to the steady three-dimensional Navier-Stokes problem. As discussed above, applying the curl operator to both sides of \eqref{NS-a} effectively decouples the velocity field from the pressure. Since the only difference between the Stokes and Navier-Stokes equations is the presence of the nonlinear advection term, however, Theorem \ref{thm2} gives the result of $\Curl((\bm{u}\cdot \nabla) \bm{u})$. Thus, the combination of \eqref{SE3d-vel} and \eqref{NS3d-temp} results in the decoupled subproblem for the velocity:
\begin{equation}\label{NS3d-vel}
\begin{cases}
\nu\Delta^2 \phi + (\Delta \bm{\phi} \cdot \nabla) \Curl \bm{\phi} - (\Curl \bm{\phi} \cdot \nabla)\Delta \bm{\phi} = \curl \bm{f} \quad &\text{in}\ \Omega,
\\
\dive \bm{\phi} = 0 \quad &\text{in}\ \Omega,
\\
\bm{\phi} \cdot \bm{n} = 0 \quad &\text{on}\ \Gamma,
\\
\Curl \bm{\phi} = \bm{0} \quad &\text{on}\ \Gamma.
\end{cases}
\end{equation}
Similarly, the decoupled pressure subproblem is formulated as \eqref{NS-pre}.

\begin{remark}
In the earlier works \cite{E-1997,Liu-2001}, E and Liu developed the finite difference method and finite element method to solve the Navier-Stokes based on the vorticity-vector potential formulations, which typically require the simultaneous approximation of multiple fields (potential and vorticity). Building upon these studies, our approach departs from the classical formulations by further exploiting properties of the curl operator on \eqref{NS3d-nonlinear}. This leads to the decoupled formulation \eqref{NS3d-vel}, which is characterized by a single unknown $\bm{\phi}$. On the other hand, this can be viewed as the extension of ``pure'' stream function formulation (Eqs. (1.5a)--(1.5b) in \cite{Lequeurre-2020} or \eqref{NS2d-vel} in this manuscript) in three dimensions.
\end{remark}

It is noted that the decoupled subproblems for the Stokes and Navier--Stokes equations require higher regularity, but the challenges associated with high-order derivatives can be effectively addressed by machine learning methods through automatic differentiation and neural network parameterizations. The specific machine learning algorithm used to solve these decoupled subproblems is presented in the next section. Moreover, we mention that all the above derivations can also be applied to attack unsteady Stokes and Navier-Stokes problems in a straightforward way.

\section{The Neural networks basis method for incompressible flows}
In this section, we first briefly review the TransNet, a special neural networks basis method for solving PDEs, as introduced in \cite{TransNet}. We emphasize that the deduction given below can be extended to any other neural networks basis method.

Mathematically, a single hidden layer fully-connected neural network can be formulated as
\begin{equation}\label{NN}
	u_{\rm NN}(\bm{x}) = \alpha_0 + \sum_{m=1}^M \alpha_m \sigma(\bm{w}^{\rm T}_m\bm{x} + b_m),
\end{equation}
where $\sigma(\cdot)$ denotes the nonlinear activation function. $\bm{w}_m \in \mathbb{R}^d$ and $b_m\in\mathbb{R}$ are the weights and bias of the $m$-th hidden neuron, respectively. The vector $\bm{\alpha} = [\alpha_0, \alpha_1, \cdots, \alpha_M]^{\rm T}$ are the parameters including the weights and bias of the output layer. Here, $M$ is the width of $u_{\rm NN}(\bm{x})$.

In neural networks basis method, the parameters $\bm{w}_m$ and $b_m$ are fixed for all $m$. Consequently, each hidden neuron acts as a fixed nonlinear basis function over the domain. We define the neural feature space as
\[
	V_M = \text{span}\{\psi_0, \psi_1, \cdots, \psi_M \},
\]
where $\psi_0 = 1$ and $\psi_m = \sigma(\bm{w}^{\rm T}_m\bm{x} + b_m)$ for $m=1,\cdots,M$. Since $u_{\rm NN}(\bm{x}) \in V_M$, the training process reduces to determining the optimal output coefficients $\bm{\alpha}$.

The initialization of hidden layer weights and biases remains a critical issue for neural network basis methods. TransNet addresses this challenge by adopting a special strategy that ensures hidden layer neurons are uniformly distributed within the unit ball. In the present work, we employ the same initialization strategy, which is summarized in the following steps.
\begin{enumerate}
\item Reparameterize each neural feature into the form
\[
	\sigma(\bm{w}^{\rm T}_m\bm{x} + b_m) = \sigma(\gamma_m(\bm{s}^{\rm T}_m\bm{x} + r_m)),
\]
where $\gamma_m = \|\bm{w}_m \|_2$, $\bm{s}_m = \bm{w}_m/\gamma_m$, $r_m = b_m/\gamma_m$. Then, one turn to initialize the parameters $\bm{s}_m$, $r_m$ and $\gamma_m$, $m=1, \cdots, M$.
\item Pre-set the parameters $\bm{s}_m$ and $r_m$ as follows.
\[
	\bm{s}_m = \frac{\bm{X}_m}{\| X_m\|_2}, \ r_m = U_m,  \quad 1 \leq m \leq M,
\]
where $\bm{X}_m$ are i.i.d. $d$-dimensional standard Gaussian random variables and $U_m$ are i.i.d. uniform random variables on $[0,1]$. This approach yields a set of uniformly distributed partition hyperplanes within the unit ball $B(0) = \{\bm{x}: \|\bm{x}\|_2\leq 1\}\subset \mathbb{R}^d$.
\item  Assume $\gamma = \gamma_m$ for $m=1,\cdots, M$ for simplicity. One will use realizations of Gaussian random fields as auxiliary functions to tune the shape parameter $\gamma$ without utilizing any information about PDEs. We refer to Algorithm 1 in \cite{TransNet} for details.
\end{enumerate}

While the preceding formulation describes a scalar-valued neural network, it can be naturally extended to vector-valued functions. For a function $\bm{\Phi}_{\rm NN}(\bm{x}): \mathbb{R}^d \to \mathbb{R}^k$, each component $\Phi_i$ is represented by a scalar neural network of the form \eqref{NN}. To enhance computational efficiency, all components share the same set of hidden-layer basis functions, that is, identical weights $\bm{w}_m$ and biases $b_m$, while maintaining independent output coefficients $\bm{\alpha}^{(i)}\in\mathbb{R}^{M+1}$ for each component.

In the remainder of this section, we detail the algorithmic implementation of the proposed framework for solving the steady Stokes and Navier-Stokes equations. In particular, we leverage the TransNet architecture to solve the decoupled, structure-preserving formulations derived in Section 2.

\subsection{The steady Stokes problem}
Generally speaking, we parameterize the stream function $\phi$ in two-dimensions or the vector potential $\bm{\phi}$ in three-dimensions, using the neural network $\phi_{\rm NN}$ or $\bm{\Phi}_{\rm NN}$, respectively. By imposing the subproblems \eqref{SE2d-vel} or \eqref{SE3d-vel} at collocation points, we obtain a least squares problem with respect to the output coefficients $\bm{\alpha}$ or $[\bm{\alpha}_1, \bm{\alpha}_2,\bm{\alpha}_3]^{\rm T}$. Once the velocity field is resolved, the corresponding pressure can be efficiently recovered through the decoupled subproblem \eqref{SE-pre} within the TransNet framework. In the following, we denote $\| \cdot \|_2$ as the vector $l^2$-norm in Euclidian space.

We first consider the two-dimensional Stokes problem. Since the single hidden layer fully-connected neural networks have sufficient expressive power for approximating low-dimensional functions \cite{Hornik-1989,Barron1993}, we approximate the stream function $\phi$ in \eqref{SE2d-vel} by the neural network $\phi_{\rm NN}$ as defined by \eqref{NN}. To discretize the problem, we choose the collocation points $\bm{X}^{in} =\{ \bm{x}^{in}_i \}_{i=1}^{I} \subset \Omega$ and $\bm{X}^{bd} =\{ \bm{x}^{bd}_j \}_{j=1}^J \subset \Gamma$. We then proceed to solve the resulting system
\begin{equation}\label{SE2d-dis}
\begin{cases}
\nu\Delta^2\sum_{m=0}^M \alpha_m \psi_m(\bm{x}^{in}_i) = \curl \bm{f}(\bm{x}^{in}_i) \quad i= 1,\cdots, I,
\\
\sum_{m=0}^M \alpha_m\psi_m(\bm{x}^{bd}_j) =0 \quad j= 1,\cdots, J,
\\
\frac{\partial}{\partial n}\sum_{m=0}^M \alpha_m \psi_m(\bm{x}^{bd}_j) = 0, \quad j= 1,\cdots, J.
\end{cases}	
\end{equation}
Following the linearity of the differential operator, we can reformulate \eqref{SE2d-dis} as
\begin{equation}\label{SE2d-matv}
\begin{bmatrix}{}
\nu \Delta^2 \bm{\Psi}_{in} \\
\bm{\Psi}_{bd} \\
\partial_n \bm{\Psi}_{bd}
\end{bmatrix}
\bm{\alpha}
=
\begin{bmatrix}{}
\curl \bm{F} \\
\bm{0} \\
\bm{0}
\end{bmatrix},
\end{equation}
where
\begin{align*}
& \Delta^2 \bm{\Psi}_{in} = [\Delta^2\psi_m(\bm{x}^{in}_i)] \in \mathbb{R}^{I\times(M+1)},
\quad
\curl \bm{F} = [\curl \bm{f}(\bm{x}^{in}_i)] \in \mathbb{R}^I
\\
& \bm{\Psi}_{bd} = [\psi_m(\bm{x}^{bd}_j)] \in \mathbb{R}^{J\times(M+1)},
\quad
\partial_n \bm{\Psi}_{bd} = [\partial_n \psi_m(\bm{x}^{bd}_j)] \in \mathbb{R}^{J\times(M+1)}.
\end{align*}
Typically, \eqref{SE2d-matv} is an over-determined system. Therefore, we solve the corresponding least-squares problem
\begin{equation}\label{SE2d-min_v}
\min_{\bm{\alpha}}
\left\Vert
\begin{bmatrix}{}
\nu \Delta^2 \bm{\Psi}_{in} \\
\bm{\Psi}_{bd} \\
\partial_n \bm{\Psi}_{bd}
\end{bmatrix}
\bm{\alpha}
-
\begin{bmatrix}{}
\curl \bm{F} \\
\bm{0} \\
\bm{0}
\end{bmatrix}
\right\Vert_2^2,
\end{equation}
which can be solved using the linear solvers provided by NumPy. If we denote $\bm{\alpha}^*$ as the minimizer of \eqref{SE2d-min_v}, then the approximate velocity $\bm{u}_{\rm NN}$ is given by
\[
\bm{u}_{\rm NN} = [u_{1}^*, u_2^*]^{\rm T}
=
\begin{bmatrix}{}
\sum_{m=0}^M \alpha_m^* \partial_y\psi_m \\
-\sum_{m=0}^M \alpha_m^* \partial_x\psi_m
\end{bmatrix}.
\]
Next, we find the approximate pressure within the TransNet framework. Suppose $p \approx \sum_{m=0}^M \beta_m\psi_m$. Substituting this into \eqref{SE-pre} yields
\[\begin{cases}
\nabla \sum_{m=0}^M \beta_m\psi_m(\bm{x}^{in}_i) = \bm{f}(\bm{x}^{in}_i) + \nu \Delta \bm{u}^*(\bm{x}^{in}_i) \quad i= 1,\cdots, I,
\\
\sum_{m=0}^M \beta_m\psi_m(\bm{x}_0) = 0,
\end{cases}\]
which can be reformulated in the form of matrix
\begin{equation}\label{SE2d-matp}
\begin{bmatrix}{}
\partial_x \bm{\Psi}_{in} \\
\partial_y \bm{\Psi}_{in} \\
\bm{\Psi}_{0}^{\rm T}
\end{bmatrix}
\bm{\beta}
=
\begin{bmatrix}{}
\tilde{\bm{F}}_1  \\
\tilde{\bm{F}}_2  \\
{0}
\end{bmatrix},
\end{equation}
where
\begin{align*}
& \partial_x \bm{\Psi}_{in} = [\partial_x\psi_m(\bm{x}^{in}_i)] \in \mathbb{R}^{I\times(M+1)},
\quad
\tilde{\bm{F}}_1 = [f_1(\bm{x}^{in}_i) + \nu\Delta u_1^*(\bm{x}^{in}_i)] \in \mathbb{R}^{I},
\\
& \partial_y \bm{\Psi}_{in} = [\partial_y\psi_m(\bm{x}^{in}_i)] \in \mathbb{R}^{I\times(M+1)},
\quad
\tilde{\bm{F}}_2 = [f_2(\bm{x}^{in}_i) + \nu\Delta u_2^*(\bm{x}^{in}_i)] \in \mathbb{R}^{I},
\\
&\bm{\Psi}_{0} = [\psi_m(\bm{x}_0)] \in \mathbb{R}^{M+1}.
\end{align*}
Since \eqref{SE2d-matp} is also an over-determined system, we turn to solve another least-squares problem
\begin{equation}\label{SE2d-min_p}
\min_{\bm{\beta}}
\left\Vert
\begin{bmatrix}{}
\partial_x \bm{\Psi}_{in} \\
\partial_y \bm{\Psi}_{in} \\
\bm{\Psi}_{0}^{\rm T}
\end{bmatrix}
\bm{\beta}
-
\begin{bmatrix}{}
\tilde{\bm{F}}_1  \\
\tilde{\bm{F}}_2  \\
{0}
\end{bmatrix}
\right\Vert_2^2.
\end{equation}

We now turn to the three-dimensional Stokes problem. The potential function $\bm{\phi}$ in \eqref{SE3d-vel} is approximated by the neural network
\[
\bm{\phi}_{\rm NN}=[\phi_{1,\rm NN}, \phi_{2,\rm NN}, \phi_{3,\rm NN}]^{\rm T} = \left[\sum_{m=0}^M\alpha_{1,m}\phi_m, \sum_{m=0}^M\alpha_{2,m}\phi_m, \sum_{m=0}^M\alpha_{3,m}\phi_m \right]^{\rm T} ,
\]
and discretize \eqref{SE3d-vel} on collocation points $\bm{X}^{in} =\{ \bm{x}^{in}_i \}_{i=1}^{I} \subset \Omega$ and $\bm{X}^{bd} =\{ \bm{x}^{bd}_j \}_{j=1}^J \subset \Gamma$. This leads to the following system
\[\begin{cases}
\nu\Delta^2\bm{\phi}_{\rm NN}(\bm{x}^{in}_i) = \Curl \bm{f}(\bm{x}^{in}_i)  \quad i= 1,\cdots, I,
\\
\dive \bm{\phi}_{\rm NN}(\bm{x}^{in}_i) = 0 \quad i= 1,\cdots, I,
\\
\bm{\phi}_{\rm NN}(\bm{x}^{bd}_j) \cdot \bm{n}(\bm{x}^{bd}_j) = 0 \quad j= 1,\cdots, J,
\\
\Curl \bm{\phi}_{\rm NN}(\bm{x}^{bd}_j) = \bm{0} \quad j= 1,\cdots, J,
\end{cases}\]
reformulating it into the form of block matrix form
\begin{equation}\label{SE3d-matv}
\begin{bmatrix}{}
\nu \Delta^2 \bm{\Psi}_{in} &  & \\
& \nu \Delta^2 \bm{\Psi}_{in} & \\
& & \nu \Delta^2 \bm{\Psi}_{in} \\
\partial_x\bm{\Psi}_{in} & \partial_y\bm{\Psi}_{in} & \partial_z\bm{\Psi}_{in} \\
\bm{\Psi}_{bd}\bm{n}_1 & \bm{\Psi}_{bd}\bm{n}_2 & \bm{\Phi}_{bd}\bm{n}_3 \\
  &-\partial_z \bm{\Psi}_{bd}  &\partial_y \bm{\Psi}_{bd} \\
\partial_z \bm{\Psi}_{bd} &  &-\partial_x \bm{\Psi}_{bd}  \\
-\partial_y \bm{\Psi}_{bd}  &\partial_x \bm{\Psi}_{bd}  &
\end{bmatrix}
\begin{bmatrix}{}
\bm{\alpha}_{1}^{(k+1)} \\
\bm{\alpha}_{2}^{(k+1)} \\
\bm{\alpha}_{3}^{(k+1)}
\end{bmatrix}
=
\begin{bmatrix}{}
\partial_y \bm{F}_3-\partial_z \bm{F}_2 \\
\partial_z \bm{F}_1-\partial_x \bm{F}_3 \\
\partial_x \bm{F}_2-\partial_y \bm{F}_1 \\
\bm{0} \\
\bm{0} \\
\bm{0} \\
\bm{0} \\
\bm{0} \\
\end{bmatrix},
\end{equation}
where
\begin{align*}
&\Delta^2 \bm{\Psi}_{in} = [\Delta^2\psi_m(\bm{x}^{in}_i)] \in \mathbb{R}^{I\times(M+1)},
\quad
\partial_x\bm{\Psi}_{in} = [\partial_x\psi_m(\bm{x}^{in}_i)] \in \mathbb{R}^{I\times(M+1)},
\\
&\partial_y\bm{\Psi}_{in} = [\partial_y\psi_m(\bm{x}^{in}_i)] \in \mathbb{R}^{I\times(M+1)},
\quad
\partial_z\bm{\Psi}_{in} = [\partial_z\psi_m(\bm{x}^{in}_i)] \in \mathbb{R}^{I\times(M+1)},
\\
&\bm{\Psi}_{bd}\bm{n}_1 = [\psi_m(\bm{x}^{bd}_j)n_1(\bm{x}^{bd}_j)] \in \mathbb{R}^{J\times(M+1)},
\quad
\bm{\Psi}_{bd}\bm{n}_2 = [\psi_m(\bm{x}^{bd}_j)n_2(\bm{x}^{bd}_j)] \in \mathbb{R}^{J\times(M+1)},
\\
&\bm{\Psi}_{bd}\bm{n}_3 = [\psi_m(\bm{x}^{bd}_j)n_3(\bm{x}^{bd}_j)] \in \mathbb{R}^{J\times(M+1)},
\quad
\partial_x\bm{\Psi}_{bd} = [\partial_x\psi_m(\bm{x}^{bd}_j)] \in \mathbb{R}^{J\times(M+1)},
\\
&\partial_y\bm{\Psi}_{bd} = [\partial_y\psi_m(\bm{x}^{bd}_j)] \in \mathbb{R}^{J\times(M+1)},
\quad
\partial_z\bm{\Psi}_{bd} = [\partial_z\psi_m(\bm{x}^{bd}_j)] \in \mathbb{R}^{J\times(M+1)},
\\
&\partial_y \bm{F}_3-\partial_z \bm{F}_2 =[\partial_yf_3(\bm{x}^{in}_i)-\partial_zf_2(\bm{x}^{in}_i)] \in \mathbb{R}^{I\times(M+1)},\\
&\partial_z \bm{F}_1-\partial_x \bm{F}_3 =[\partial_zf_1(\bm{x}^{in}_i)-\partial_xf_3(\bm{x}^{in}_i)] \in \mathbb{R}^{I\times(M+1)},\\
&\partial_x \bm{F}_2-\partial_y \bm{F}_1 =[\partial_xf_2(\bm{x}^{in}_i)-\partial_yf_1(\bm{x}^{in}_i)] \in \mathbb{R}^{I\times(M+1)}.
\end{align*}
Obviously, \eqref{SE3d-matv} is an over-determined system, so we turn to solve the least squares problem as follows
\begin{equation}\label{SE3d-min_v}
\min_{\bm{\alpha}}
\left\Vert
\begin{bmatrix}{}
\nu \Delta^2 \bm{\Psi}_{in} &  & \\
& \nu \Delta^2 \bm{\Psi}_{in} & \\
& & \nu \Delta^2 \bm{\Psi}_{in} \\
\partial_x\bm{\Psi}_{in} & \partial_y\bm{\Psi}_{in} & \partial_z\bm{\Psi}_{in} \\
\bm{\Psi}_{bd}\bm{n}_1 & \bm{\Psi}_{bd}\bm{n}_2 & \bm{\Phi}_{bd}\bm{n}_3 \\
  &-\partial_z \bm{\Psi}_{bd}  &\partial_y \bm{\Psi}_{bd} \\
\partial_z \bm{\Psi}_{bd} &  &-\partial_x \bm{\Psi}_{bd}  \\
-\partial_y \bm{\Psi}_{bd}  &\partial_x \bm{\Psi}_{bd}  &
\end{bmatrix}
\begin{bmatrix}{}
\bm{\alpha}_{1} \\
\bm{\alpha}_{2} \\
\bm{\alpha}_{3}
\end{bmatrix}
-
\begin{bmatrix}{}
\partial_y \bm{F}_3-\partial_z \bm{F}_2 \\
\partial_z \bm{F}_1-\partial_x \bm{F}_3 \\
\partial_x \bm{F}_2-\partial_y \bm{F}_1 \\
\bm{0} \\
\bm{0} \\
\bm{0} \\
\bm{0} \\
\bm{0} \\
\end{bmatrix}
\right\Vert_2^2.
\end{equation}
Once the optimal parameters $[\bm{\alpha}_1^*, \bm{\alpha}_2^*, \bm{\alpha}_3^*]$ are determined, we can obtain the velocity field as
\[
	\bm{u}_{\rm NN} = [u_{1}^*, u_2^*, u_3^*]^{\rm T}
=
\begin{bmatrix}{}
\sum_{m=0}^M (\alpha_{3,m}^* \partial_y\psi_m - \alpha_{2,m}^* \partial_z\psi_m) \\
\sum_{m=0}^M (\alpha_{1,m}^* \partial_z\psi_m - \alpha_{3,m}^* \partial_x\psi_m) \\
\sum_{m=0}^M (\alpha_{2,m}^* \partial_x\psi_m - \alpha_{1,m}^* \partial_y\psi_m) \\
\end{bmatrix}.
\]
Similarly, the subproblem for the pressure \eqref{SE-pre} can be reformulated as
\begin{equation}\label{SE3d-matp}
\begin{bmatrix}{}
\partial_x \bm{\Psi}_{in} \\
\partial_y \bm{\Psi}_{in} \\
\partial_z \bm{\Psi}_{in} \\
\bm{\Psi}_{0}^{\rm T}
\end{bmatrix}
\bm{\beta}
=
\begin{bmatrix}{}
\tilde{\bm{F}}_1  \\
\tilde{\bm{F}}_2  \\
\tilde{\bm{F}}_3  \\
{0}
\end{bmatrix},
\end{equation}
where $\partial_x \bm{\Psi}_{in}$, $\partial_y \bm{\Psi}_{in}$, $\partial_z \bm{\Psi}_{in}$ are defined in the same way as in \eqref{SE3d-matv}, and
\begin{align*}
&\tilde{\bm{F}}_1 = [f_1(\bm{x}^{in}_i) + \nu\Delta u_1^*(\bm{x}^{in}_i)] \in \mathbb{R}^{I},
\\
&\tilde{\bm{F}}_2 = [f_2(\bm{x}^{in}_i) + \nu\Delta u_2^*(\bm{x}^{in}_i)] \in \mathbb{R}^{I},
\\
& \tilde{\bm{F}}_3 = [f_3(\bm{x}^{in}_i) + \nu\Delta u_3^*(\bm{x}^{in}_i)] \in \mathbb{R}^{I},\\
&\bm{\Psi}_{0} = [\psi_m(\bm{x}_0)] \in \mathbb{R}^{M+1}.
\end{align*}

\subsection{The steady Navier-Stokes problem}
For the steady Navier-Stokes problem, the presence of the nonlinear advection term prevents a direct formulation as a linear least-squares problem. To address this difficulty, we first linearize the governing equations at the continuous level using the Gauss-Newton method, which yields a sequence of linear subproblems. Additional linearization strategies are provided in the appendix. Subsequently, the neural network basis method is employed to solve these linearized equations. This strategy is referred to as the Newton-LLSQ framework, as proposed in \cite{Dong-2021a}. By approximating the stream function in two dimensions or the vector potential in three dimensions and enforcing the interior and boundary conditions at collocation points, each iteration reduces to solving a linear least-squares system for the coefficients $\bm{\alpha}$. This approach is equivalent to first deriving a discretized nonlinear system and then applying the Gauss-Newton method to solve it \cite{Huang-2025}.

To address the nonlinearity of the Navier-Stokes equations, we follow the idea in \cite{Huang-2025} and linearize the convection term via variational derivatives. For the nonlinear advection operator $f(\bm{u}) = (\bm{u}\cdot \nabla) \bm{u}$, the G{\^a}teaux derivative (see \cite{Chang2005}) of $f$ at $\bm{u}$ in any direction $\bm{v}$ is defined as
\[
	\mathcal{D}f(\bm{u})[\bm{v}]
	= \lim_{\varepsilon \to 0}\frac{f(\bm{u} + \varepsilon \bm{v}) - f(\bm{u})}{\varepsilon}.
\]
A straightforward expansion yields
\[
	\mathcal{D}f(\bm{u})[\bm{v}]
	= \lim_{\varepsilon \to 0}\frac{(\bm{u}\cdot \nabla) \varepsilon\bm{v} + (\varepsilon\bm{v} \cdot \nabla) \bm{u}+ (\varepsilon\bm{v} \cdot \nabla) \varepsilon\bm{v}}{\varepsilon}
	= (\bm{u} \cdot \nabla) \bm{v} + (\bm{v} \cdot \nabla) \bm{u}.
\]
It is worth emphasizing that the mapping
$
	\bm{v} \;\mapsto\; \mathcal{D}f(\bm{u})[\bm{v}]
$
is linear and continuous with respect to $\bm{v}$ in Sobolev spaces. Therefore, the G{\^a}teaux derivative coincides with the Fr{\'e}chet derivative of $f$ at $\bm{u}$ \cite{Chang2005}. Based on this result, the standard Newton linearization of the convection term at the $(k+1)$-th iteration reads
\begin{align}
	f(\bm{u}^{(k+1)})
	&\approx f(\bm{u}^{(k)}) + \mathcal{D}f(\bm{u}^{(k)})[\delta\bm{u}] \nonumber \\
	&= (\bm{u}^{(k)} \cdot \nabla)\bm{u}^{(k)} + (\bm{u}^{(k)} \cdot \nabla)\delta\bm{u} + (\delta\bm{u} \cdot \nabla) \bm{u}^{(k)} \nonumber
	\\
	&=\bm{u}^{(k)} \cdot \nabla \bm{u}^{(k+1)} + \bm{u}^{(k+1)} \cdot \nabla \bm{u}^{(k)} - \bm{u}^{(k)} \cdot \nabla \bm{u}^{(k)}, \label{k-nonlinear}
\end{align}
where $\delta \bm{u} = \bm{u}^{(k+1)} - \bm{u}^{(k)}$.

Specifically, in the two-dimensional case, we employ the stream function representation $\bm{u} = \Curl \phi$. Substituting the linearized term \eqref{k-nonlinear} into the decoupled velocity subproblem \eqref{NS2d-vel}, we obtain the linearized problem at each iteration step as follows:
\begin{equation}\label{2d-linear}
\mu \Delta ^2 \phi^{(k+1)} - (\Curl \phi^{(k)} \cdot \nabla) \Delta \phi^{(k+1)} - (\Curl \phi^{(k+1)} \cdot \nabla) \Delta \phi^{(k)} = \curl\bm{f} - (\Curl \phi^{(k)} \cdot \nabla) \Delta \phi^{(k)}.
\end{equation}
Similarly, in the three-dimensional case, where $\bm{u} = \Curl \bm{\phi}$, substituting the linearized term \eqref{k-nonlinear} into the corresponding subproblem \eqref{NS3d-vel} yields
\begin{align}\label{3d-linear}
&\mu \Delta ^2 \bm{\phi}^{(k+1)} - (\Curl \bm{\phi}^{(k)} \cdot \nabla) \Delta \bm{\phi}^{(k+1)} +(\Delta \bm{\phi}^{(k)} \cdot \nabla)\Curl \bm{\phi}^{(k+1)} \nonumber \\
&\qquad \qquad \quad  - (\Curl \bm{\phi}^{(k+1)} \cdot \nabla) \Delta \bm{\phi}^{(k)} + (\Delta \bm{\phi}^{(k+1)} \cdot \nabla)\Curl \bm{\phi}^{(k)}  \nonumber \\
= & \Curl\bm{f} - (\Curl \bm{\phi}^{(k)} \cdot \nabla) \Delta \bm{\phi}^{(k)} + (\Delta \bm{\phi}^{(k)} \cdot \nabla)\Curl \bm{\phi}^{(k)}.
\end{align}

Next, we implement the Decoupled-DFNN framework for the Navier-Stokes equations. In the two-dimensional case, we first approximate the stream function $\phi$ using the neural network $\phi_{\rm NN}$. Substituting the \eqref{2d-linear} into the \eqref{NS2d-vel} and selecting collocation points $\bm{X}^{in} =\{ \bm{x}^{in}_i \}_{i=1}^{I} \subset \Omega$ and $\bm{X}^{bd} =\{ \bm{x}^{bd}_j \}_{j=1}^J \subset \Gamma$, the problem is reduced to determining the optimal coefficients$\bm{\alpha}^{(k+1)}$ that satisfy the following system
\[
\begin{cases}
&\quad \nu \Delta ^2 \sum_{m=0}^M\alpha_m^{(k+1)}\psi_m(\bm{x}_i^{in}) - (\Curl \sum_{m=0}^M\alpha_m^{(k)}\psi_m(\bm{x}_i^{in}) \cdot \nabla) \Delta \sum_{m=0}^M\alpha_m^{(k+1)}\psi_m(\bm{x}_i^{in}) \\
& \qquad \qquad \qquad \qquad \qquad \qquad \ -(\Curl \sum_{m=0}^M\alpha_m^{(k+1)}\psi_m(\bm{x}_i^{in}) \cdot \nabla) \Delta \sum_{m=0}^M\alpha_m^{(k)}\psi_m(\bm{x}_i^{in}) \\
&= \curl\bm{f} - (\Curl \sum_{m=0}^M\alpha_m^{(k)}\psi_m(\bm{x}_i^{in}) \cdot \nabla) \Delta \sum_{m=0}^M\alpha_m^{(k)}\psi_m(\bm{x}_i^{in}), \quad i=1\cdots,I,
\\
&\sum_{m=0}^M\alpha_m^{(k+1)}\psi_m(\bm{x}_j^{bd}) = 0, \quad j=1\cdots,J,
\\
&\frac{\partial}{\partial n} \sum_{m=0}^M\alpha_m^{(k+1)}\psi_m(\bm{x}_j^{bd}) = 0 , \quad j=1\cdots,J.
\end{cases}
\]
To represent this system in matrix form, denote $\odot$ the standard Hadamard product for matrices of the same dimensions, and extend this operator to the case of a vector $\bm{a} \in \mathbb{R}^m$ and a matrix $\bm{B} \in \mathbb{R}^{m \times n}$. Specifically, for $\bm{C} = \bm{a} \odot \bm{B}$, the entries are defined by $c_{ij} = a_i b_{ij}$. This operation can be expressed in terms of standard matrix multiplication as
\[
	\bm{C} = \mbox{diag}(\bm{a})\bm{B},
\]
where $\text{diag}(\bm{a})$ is the $m \times m$ diagonal matrix with the elements of $\bm{a}$ on its main diagonal. Using this notation, we reformulate the system into the following block matrix form
\begin{equation}\label{NS2d-matv}
\begin{bmatrix}{}
\nu \Delta^2 \bm{\Psi}_{in} + \bm{NL} \\
\bm{\Psi}_{bd} \\
\partial_n \bm{\Psi}_{bd}
\end{bmatrix}
\bm{\alpha}^{(k+1)}
=
\begin{bmatrix}{}
\tilde{\bm{F}} \\
\bm{0} \\
\bm{0}
\end{bmatrix},
\end{equation}
where
\begin{align*}
& \bm{NL} = -\bm{U}_1^{(k)}\odot\partial_x\Delta\bm{\Psi}_{in} - \bm{U}_2^{(k)}\odot\partial_y\Delta\bm{\Psi}_{in} + \Delta \bm{U}_2^{(k)}\odot\partial_y\bm{\Psi}_{in} + \Delta \bm{U}_1^{(k)}\odot\partial_x\bm{\Psi}_{in},
\\
&\tilde{\bm{F}} = \curl \bm{F} + \bm{U}_1^{(k)}\odot\Delta \bm{U}_2^{(k)} - \bm{U}_2^{(k)}\odot\Delta \bm{U}_1^{(k)},
\quad
\curl \bm{F} = [\curl \bm{f}(\bm{x}^{in}_i)] \in \mathbb{R}^I,
\\
& \bm{U}_1^{k} = \partial_y \bm{\Psi}_{in} \cdot \bm{\alpha}^{(k)},
\qquad
\Delta \bm{U}_1^{k} = \partial_y \Delta \bm{\Psi}_{in} \cdot \bm{\alpha}^{(k)},
\\
& \bm{U}_2^{k} = -\partial_x \bm{\Psi}_{in}\cdot \bm{\alpha}^{(k)},
\quad
\Delta \bm{U}_2^{k} = -\partial_x \Delta \bm{\Psi}_{in}\cdot \bm{\alpha}^{(k)},
\\
& \Delta^2 \bm{\Psi}_{in} = [\Delta^2\psi_m(\bm{x}^{in}_i)] \in \mathbb{R}^{I\times(M+1)},
\\
&\partial_x \Delta \bm{\Psi}_{in} = [\partial_x \Delta \psi_m(\bm{x}^{in}_i)] \in \mathbb{R}^{I\times(M+1)},
\quad
\partial_y \Delta \bm{\Psi}_{in} = [\partial_y \Delta \psi_m(\bm{x}^{in}_i)] \in \mathbb{R}^{I\times(M+1)},
\\
&\partial_x \bm{\Psi}_{in} = [\partial_x \psi_m(\bm{x}^{in}_i)] \in \mathbb{R}^{I\times(M+1)},
\quad
\partial_y \bm{\Psi}_{in} = [\partial_y \psi_m(\bm{x}^{in}_i)] \in \mathbb{R}^{I\times(M+1)},
\\
& \bm{\Psi}_{bd} = [\psi_m(\bm{x}^{bd}_j)] \in \mathbb{R}^{J\times(M+1)},
\quad
\partial_n \bm{\Psi}_{bd} = [\partial_n \psi_m(\bm{x}^{bd}_j)] \in \mathbb{R}^{J\times(M+1)}.
\end{align*}
Since \eqref{NS2d-matv} is an over-determined system, we solve the corresponding least-squares problem
\begin{equation}\label{NS2d-min_v}
\min_{\bm{\alpha}^{(k+1)}}
\left\Vert
\begin{bmatrix}{}
\nu \Delta^2 \bm{\Psi}_{in} +\bm{NL} \\
\bm{\Psi}_{bd} \\
\partial_n \bm{\Psi}_{bd}
\end{bmatrix}
\bm{\alpha} ^{(k+1)}
-
\begin{bmatrix}{}
\tilde{\bm{F}} \\
\bm{0} \\
\bm{0}
\end{bmatrix}
\right\Vert_2^2.
\end{equation}
Once the minimizer $\bm{\alpha}^*$ is obtained, the solution to \eqref{NS-pre} can be approximated as
\begin{equation}\label{NS2d-matp}
\begin{bmatrix}{}
\partial_x \bm{\Psi}_{in} \\
\partial_y \bm{\Psi}_{in} \\
\bm{\Psi}_{0}^{\rm T}
\end{bmatrix}
\bm{\beta}
=
\begin{bmatrix}{}
\tilde{\bm{F}}_1  \\
\tilde{\bm{F}}_2  \\
{0}
\end{bmatrix},
\end{equation}
where
\begin{align*}
&\tilde{\bm{F}}_1 = [f_1(\bm{x}^{in}_i) + \nu\Delta u_1^*(\bm{x}^{in}_i)-(u_1\partial_x u_1 + u_2\partial_y u_1)] \in \mathbb{R}^{I},
\\
&\tilde{\bm{F}}_2 = [f_2(\bm{x}^{in}_i) + \nu\Delta u_2^*(\bm{x}^{in}_i)-(u_1\partial_x u_2 + u_2\partial_y u_2)] \in \mathbb{R}^{I},
\\
&\bm{\Psi}_{0} = [\psi_m(\bm{x}_0)] \in \mathbb{R}^{M+1}.
\end{align*}
Similarly, the system \eqref{NS2d-matp} is reformulated as a least-squares problem, from which we compute the minimize $\bm{\beta}^*$. The pressure is then approximated by $p_{\rm NN} = \sum_{m=0}^M \beta^*_m\psi_m(\bm{x})$.

For the three-dimensional case, we approximate the potential function $\bm{\phi}$ using the neural network
\[
\bm{\phi}_{\rm NN}=[\phi_{1,\rm NN}, \phi_{2,\rm NN}, \phi_{3,\rm NN}]^{\rm T} = \left[\sum_{m=0}^M\alpha_{1,m}\phi_m, \sum_{m=0}^M\alpha_{2,m}\phi_m, \sum_{m=0}^M\alpha_{3,m}\phi_m \right]^{\rm T}.
\]
Substituting the \eqref{3d-linear} into the \eqref{NS3d-vel} and discretizing the resulting equations at the collocation points $\bm{X}^{in} =\{ \bm{x}^{in}_i \}_{i=1}^{I} \subset \Omega$ and $\bm{X}^{bd} =\{ \bm{x}^{bd}_j \}_{j=1}^J \subset \Gamma$, we obtain
\[\begin{cases}
&\mu \Delta ^2 \bm{\phi}^{(k+1)}_{\rm NN}(\bm{x}^{in}_i) - (\Curl \bm{\phi}^{(k)}_{\rm NN}(\bm{x}^{in}_i) \cdot \nabla) \Delta \bm{\phi}^{(k+1)}_{\rm NN}(\bm{x}^{in}_i) +(\Delta \bm{\phi}^{(k)}_{\rm NN}(\bm{x}^{in}_i) \cdot \nabla)\Curl \bm{\phi}^{(k+1)}_{\rm NN}(\bm{x}^{in}_i) \nonumber \\
&\qquad \qquad \qquad  - (\Curl \bm{\phi}^{(k+1)}_{\rm NN}(\bm{x}^{in}_i) \cdot \nabla) \Delta \bm{\phi}^{(k)}_{\rm NN}(\bm{x}^{in}_i) + (\Delta \bm{\phi}^{(k+1)}_{\rm NN}(\bm{x}^{in}_i) \cdot \nabla)\Curl \bm{\phi}^{(k)}_{\rm NN}(\bm{x}^{in}_i) \nonumber \\
= & \Curl\bm{f} - (\Curl \bm{\phi}^{(k)}_{\rm NN}(\bm{x}^{in}_i) \cdot \nabla) \Delta \bm{\phi}^{(k)}_{\rm NN}(\bm{x}^{in}_i) + (\Delta \bm{\phi}^{(k)}_{\rm NN}(\bm{x}^{in}_i) \cdot \nabla)\Curl \bm{\phi}^{(k)}_{\rm NN}(\bm{x}^{in}_i),  \quad i= 1,\cdots, I,
\\
&\dive \bm{\phi}^{(k+1)}_{\rm NN}(\bm{x}^{in}_i) = 0 \quad i= 1,\cdots, I,
\\
&\bm{\phi}_{\rm NN}^{(k+1)}(\bm{x}^{bd}_j) \cdot \bm{n}(\bm{x}^{bd}_j) = 0 \quad j= 1,\cdots, J,
\\
&\Curl \bm{\phi}^{(k+1)}_{\rm NN}(\bm{x}^{bd}_j) = \bm{0} \quad j= 1,\cdots, J,
\end{cases}\]
which can be further reformulated in block matrix form
\begin{equation}\label{NS3d-matv}
\begin{bmatrix}{}
\nu \Delta^2 \bm{\Psi}_{in} + \bm{N}_{11}   & \bm{N}_{12}  &\bm{N}_{13}  \\
\bm{N}_{21}   & \nu \Delta^2\bm{\Psi}_{in}+\bm{N}_{22}    & \bm{N}_{23} \\
\bm{N}_{31}   & \bm{N}_{32}   & \nu \Delta^2 \bm{\Psi}_{in} +\bm{N}_{33} \\
\partial_x\bm{\Psi}_{in} & \partial_y\bm{\Psi}_{in} & \partial_z\bm{\Psi}_{in} \\
\bm{\Psi}_{bd}\bm{n}_1 & \bm{\Psi}_{bd}\bm{n}_2 & \bm{\Phi}_{bd}\bm{n}_3 \\
  &-\partial_z \bm{\Psi}_{bd}  &\partial_y \bm{\Psi}_{bd} \\
\partial_z \bm{\Psi}_{bd} &  &-\partial_x \bm{\Psi}_{bd}  \\
-\partial_y \bm{\Psi}_{bd}  &\partial_x \bm{\Psi}_{bd}  &
\end{bmatrix}
\begin{bmatrix}{}
\bm{\alpha}_{1}^{(k+1)} \\
\bm{\alpha}_{2}^{(k+1)} \\
\bm{\alpha}_{3}^{(k+1)}
\end{bmatrix}
=
\begin{bmatrix}{}
\tilde{\bm{F}}_1 \\
\tilde{\bm{F}}_2 \\
\tilde{\bm{F}}_3 \\
\bm{0} \\
\bm{0} \\
\bm{0} \\
\bm{0} \\
\bm{0} \\
\end{bmatrix},
\end{equation}
where
\begin{align*}
\bm{N}_{11} =
&-(\bm{U}_1^{(k)}\odot\partial_x\Delta \bm{\Psi}_{in} + \bm{U}_2^{(k)}\odot\partial_y\Delta \bm{\Psi}_{in} + \bm{U}_3^{(k)}\odot\partial_z\Delta \bm{\Psi}_{in})\\
&- (\partial_y\Delta\bm{\Psi}_{in}\cdot\bm{\alpha}_1^{(k)}\odot\partial_z\bm{\Psi}_{in} - \partial_z\Delta\bm{\Psi}_{in}\cdot \bm{\alpha}_1^{(k)}\odot\partial_y\bm{\Psi}_{in}) + \partial_x\bm{U}_1^{(k)}\odot \Delta\bm{\Psi}_{in},
\\
\bm{N}_{12} =
& -\Delta\bm{\Psi}_{in}\cdot \bm{\alpha}_1^{(k)} \odot\partial_{xz}\bm{\Psi}_{in} - \Delta\bm{\Psi}_{in}\cdot \bm{\alpha}_2^{(k)} \odot\partial_{yz}\bm{\Psi}_{in} -\Delta\bm{\Psi}_{in}\cdot \bm{\alpha}_3^{(k)} \odot\partial_{zz}\bm{\Psi}_{in} \\
& - (\partial_z\Delta\bm{\Psi}_{in}\cdot \bm{\alpha}_1^{(k)}\odot\partial_x\bm{\Psi}_{in} - \partial_x\Delta\bm{\Psi}_{in}\cdot\bm{\alpha}_1^{(k)}\odot\partial_z\bm{\Psi}_{in} ) + \partial_y\bm{U}_1^{(k)}\odot \Delta\bm{\Psi}_{in},
\\
\bm{N}_{13} =
& \Delta\bm{\Psi}_{in}\cdot \bm{\alpha}_1^{(k)} \odot\partial_{xy}\bm{\Psi}_{in} + \Delta\bm{\Psi}_{in}\cdot \bm{\alpha}_2^{(k)} \odot\partial_{yy}\bm{\Psi}_{in}  +\Delta\bm{\Psi}_{in}\cdot \bm{\alpha}_3^{(k)} \odot\partial_{zy}\bm{\Psi}_{in} \\
& - (\partial_x\Delta\bm{\Psi}_{in}\cdot \bm{\alpha}_1^{(k)}\odot\partial_y\bm{\Psi}_{in} - \partial_y\Delta\bm{\Psi}_{in}\cdot\bm{\alpha}_1^{(k)}\odot\partial_x\bm{\Psi}_{in} ) + \partial_z\bm{U}_1^{(k)}\odot \Delta\bm{\Psi}_{in},
\\
\bm{N}_{21} =
& \Delta\bm{\Psi}_{in}\cdot \bm{\alpha}_1^{(k)} \odot\partial_{xz}\bm{\Psi}_{in} + \Delta\bm{\Psi}_{in}\cdot \bm{\alpha}_2^{(k)} \odot\partial_{yz}\bm{\Psi}_{in} +\Delta\bm{\Psi}_{in}\cdot \bm{\alpha}_3^{(k)} \odot\partial_{zz}\bm{\Psi}_{in} \\
& - (\partial_y\Delta\bm{\Psi}_{in}\cdot \bm{\alpha}_2^{(k)}\odot\partial_z\bm{\Psi}_{in} - \partial_z\Delta\bm{\Psi}_{in}\cdot\bm{\alpha}_2^{(k)}\odot\partial_y\bm{\Psi}_{in} ) + \partial_x\bm{U}_2^{(k)}\odot \Delta\bm{\Psi}_{in},
\\
\bm{N}_{22} =
&-(\bm{U}_1^{(k)}\odot\partial_x\Delta \bm{\Psi}_{in} + \bm{U}_2^{(k)}\odot\partial_y\Delta \bm{\Psi}_{in} + \bm{U}_3^{(k)}\odot\partial_z\Delta \bm{\Psi}_{in})\\
&- (\partial_z\Delta\bm{\Psi}_{in}\cdot\bm{\alpha}_2^{(k)}\odot\partial_x\bm{\Psi}_{in} - \partial_x\Delta\bm{\Psi}_{in}\cdot \bm{\alpha}_2^{(k)}\odot\partial_z\bm{\Psi}_{in}) + \partial_y\bm{U}_2^{(k)}\odot \Delta\bm{\Psi}_{in}
\\
\bm{N}_{23} =
& -\Delta\bm{\Psi}_{in}\cdot \bm{\alpha}_1^{(k)} \odot\partial_{xx}\bm{\Psi}_{in} - \Delta\bm{\Psi}_{in}\cdot \bm{\alpha}_2^{(k)} \odot\partial_{yx}\bm{\Psi}_{in} - \Delta\bm{\Psi}_{in}\cdot \bm{\alpha}_3^{(k)} \odot\partial_{zx}\bm{\Psi}_{in} \\
& - (\partial_x\Delta\bm{\Psi}_{in}\cdot \bm{\alpha}_2^{(k)}\odot\partial_y\bm{\Psi}_{in} - \partial_y\Delta\bm{\Psi}_{in}\cdot\bm{\alpha}_2^{(k)}\odot\partial_x\bm{\Psi}_{in} ) + \partial_z\bm{U}_2^{(k)}\odot \Delta\bm{\Psi}_{in},
\end{align*}
\begin{align*}
\bm{N}_{31} =
& -(\Delta\bm{\Psi}_{in}\cdot \bm{\alpha}_1^{(k)} \odot\partial_{xy}\bm{\Psi}_{in} + \Delta\bm{\Psi}_{in}\cdot \bm{\alpha}_2^{(k)} \odot\partial_{yy}\bm{\Psi}_{in}  +\Delta\bm{\Psi}_{in}\cdot \bm{\alpha}_3^{(k)} \odot\partial_{zy}\bm{\Psi}_{in}) \\
& - (\partial_y\Delta\bm{\Psi}_{in}\cdot \bm{\alpha}_3^{(k)}\odot\partial_z\bm{\Psi}_{in} - \partial_z\Delta\bm{\Psi}_{in}\cdot\bm{\alpha}_3^{(k)}\odot\partial_y\bm{\Psi}_{in} ) + \partial_x\bm{U}_3^{(k)}\odot \Delta\bm{\Psi}_{in},
\\
\bm{N}_{32} =
& \Delta\bm{\Psi}_{in}\cdot \bm{\alpha}_1^{(k)} \odot\partial_{xx}\bm{\Psi}_{in} + \Delta\bm{\Psi}_{in}\cdot \bm{\alpha}_2^{(k)} \odot\partial_{yx}\bm{\Psi}_{in} + \Delta\bm{\Psi}_{in}\cdot \bm{\alpha}_3^{(k)} \odot\partial_{zx}\bm{\Psi}_{in} \\
& - (\partial_z\Delta\bm{\Psi}_{in}\cdot \bm{\alpha}_3^{(k)}\odot\partial_x\bm{\Psi}_{in} - \partial_x\Delta\bm{\Psi}_{in}\cdot\bm{\alpha}_3^{(k)}\odot\partial_z\bm{\Psi}_{in} ) + \partial_y\bm{U}_3^{(k)}\odot \Delta\bm{\Psi}_{in},
\\
\bm{N}_{33} =
&-(\bm{U}_1^{(k)}\odot\partial_x\Delta \bm{\Psi}_{in} + \bm{U}_2^{(k)}\odot\partial_y\Delta \bm{\Psi}_{in} + \bm{U}_3^{(k)}\odot\partial_z\Delta \bm{\Psi}_{in})\\
&- (\partial_x\Delta\bm{\Psi}_{in}\cdot\bm{\alpha}_3^{(k)}\odot\partial_y\bm{\Psi}_{in} - \partial_y\Delta\bm{\Psi}_{in}\cdot \bm{\alpha}_3^{(k)}\odot\partial_x\bm{\Psi}_{in}) + \partial_z\bm{U}_3^{(k)}\odot \Delta\bm{\Psi}_{in},
\end{align*}
\begin{align*}
\tilde{\bm{F}}_1 =
& \partial_y \bm{F}_3 -\partial_z \bm{F}_2 - (\bm{U}_1^{(k)}\odot\partial_x\Delta\bm{\Psi}_{in} \cdot \bm{\alpha}_1^{(k)} + \bm{U}_2^{(k)}\odot\partial_y\Delta\bm{\Psi}_{in} \cdot \bm{\alpha}_1^{(k)}+\bm{U}_3^{(k)}\odot\partial_x\Delta\bm{\Psi}_{in} \cdot \bm{\alpha}_1^{(k)} )\\
& \qquad \qquad \qquad \quad + (\partial_x\bm{U}_1^{(k)}\odot\Delta\bm{\Psi}_{in} \cdot \bm{\alpha}_1^{(k)} + \partial_y\bm{U}_1^{(k)}\odot\Delta\bm{\Psi}_{in} \cdot \bm{\alpha}_2^{(k)}+ \partial_z\bm{U}_1^{(k)}\odot\Delta\bm{\Psi}_{in} \cdot \bm{\alpha}_3^{(k)}),
\\
\tilde{\bm{F}}_2 =
& \partial_z \bm{F}_1-\partial_x \bm{F}_3  - (\bm{U}_1^{(k)}\odot\partial_x\Delta\bm{\Psi}_{in} \cdot \bm{\alpha}_2^{(k)} + \bm{U}_2^{(k)}\odot\partial_y\Delta\bm{\Psi}_{in} \cdot \bm{\alpha}_2^{(k)}+\bm{U}_3^{(k)}\odot\partial_x\Delta\bm{\Psi}_{in} \cdot \bm{\alpha}_2^{(k)} )\\
& \qquad \qquad \qquad \quad + (\partial_x\bm{U}_2^{(k)}\odot\Delta\bm{\Psi}_{in} \cdot \bm{\alpha}_1^{(k)} + \partial_y\bm{U}_2^{(k)}\odot\Delta\bm{\Psi}_{in} \cdot \bm{\alpha}_2^{(k)}+ \partial_z\bm{U}_2^{(k)}\odot\Delta\bm{\Psi}_{in} \cdot \bm{\alpha}_3^{(k)}),
\\
\tilde{\bm{F}}_3 =
&\partial_x \bm{F}_2-\partial_y \bm{F}_1  - (\bm{U}_1^{(k)}\odot\partial_x\Delta\bm{\Psi}_{in} \cdot \bm{\alpha}_3^{(k)} + \bm{U}_2^{(k)}\odot\partial_y\Delta\bm{\Psi}_{in} \cdot \bm{\alpha}_3^{(k)}+\bm{U}_3^{(k)}\odot\partial_x\Delta\bm{\Psi}_{in} \cdot \bm{\alpha}_3^{(k)} )\\
& \qquad \qquad \qquad \quad + (\partial_x\bm{U}_3^{(k)}\odot\Delta\bm{\Psi}_{in} \cdot \bm{\alpha}_1^{(k)} + \partial_y\bm{U}_3^{(k)}\odot\Delta\bm{\Psi}_{in} \cdot \bm{\alpha}_2^{(k)}+ \partial_z\bm{U}_3^{(k)}\odot\Delta\bm{\Psi}_{in} \cdot \bm{\alpha}_3^{(k)}).
\end{align*}
Here,
\begin{align*}
&\bm{U}_1^{(k)} = \partial_y \bm{\Psi}_{in} \cdot \bm{\alpha}_3^{(k)} - \partial_z \bm{\Psi}_{in} \cdot \bm{\alpha}_2^{(k)},
\quad
\partial_r \bm{U}_1^{(k)} = \partial_{ry} \bm{\Psi}_{in} \cdot \bm{\alpha}_3^{(k)} - \partial_{rz} \bm{\Psi}_{in} \cdot \bm{\alpha}_2^{(k)},  \ r= x,y,z,
\\
&\bm{U}_2^{(k)} = \partial_z \bm{\Psi}_{in} \cdot \bm{\alpha}_1^{(k)} - \partial_x \bm{\Psi}_{in} \cdot \bm{\alpha}_3^{(k)},
\quad
\partial_r \bm{U}_2^{(k)} = \partial_{rz} \bm{\Psi}_{in} \cdot \bm{\alpha}_1^{(k)} - \partial_{rx} \bm{\Psi}_{in} \cdot \bm{\alpha}_3^{(k)}, \ r =x,y,z,
\\
&\bm{U}_3^{(k)} = \partial_x \bm{\Psi}_{in} \cdot \bm{\alpha}_2^{(k)} - \partial_y \bm{\Psi}_{in} \cdot \bm{\alpha}_1^{(k)},
\quad
\partial_r \bm{U}_3^{(k)} = \partial_{rx} \bm{\Psi}_{in} \cdot \bm{\alpha}_2^{(k)} - \partial_{ry} \bm{\Psi}_{in} \cdot \bm{\alpha}_1^{(k)}, \ r =x,y,z,
\end{align*}
\begin{align*}
& \Delta \bm{\Psi}_{in} = [\Delta\psi_m(\bm{x}^{in}_i)] \in \mathbb{R}^{I\times(M+1)},
\quad \quad
\partial_r\bm{\Psi}_{in} = [\partial_s\psi_m(\bm{x}^{in}_i)] \in \mathbb{R}^{I\times(M+1)},  \ r = x,y,z,
\\
&\Delta^2 \bm{\Psi}_{in} = [\Delta^2\psi_m(\bm{x}^{in}_i)] \in \mathbb{R}^{I\times(M+1)},
\quad
\partial_{\iota r}\bm{\Psi}_{in} = [\partial_\iota\partial_r\psi_m(\bm{x}^{in}_i)] \in \mathbb{R}^{I\times(M+1)}, \ \iota, r = x,y,z,
\\
&\partial_r\Delta\bm{\Psi}_{in} = [\partial_r\Delta\psi_m(\bm{x}^{in}_i)] \in \mathbb{R}^{I\times(M+1)},  \ r = x,y,z,
\\
&\bm{\Psi}_{bd}\bm{n}_1 = [\psi_m(\bm{x}^{bd}_j)n_1(\bm{x}^{bd}_j)] \in \mathbb{R}^{J\times(M+1)},
\quad
\bm{\Psi}_{bd}\bm{n}_2 = [\psi_m(\bm{x}^{bd}_j)n_2(\bm{x}^{bd}_j)] \in \mathbb{R}^{J\times(M+1)},
\\
&\bm{\Psi}_{bd}\bm{n}_3 = [\psi_m(\bm{x}^{bd}_j)n_3(\bm{x}^{bd}_j)] \in \mathbb{R}^{J\times(M+1)},
\quad
\partial_s\bm{\Psi}_{bd} = [\partial_s\psi_m(\bm{x}^{bd}_j)] \in \mathbb{R}^{J\times(M+1)}, \ s = x,y,z,
\\
&\partial_y \bm{F}_3-\partial_z \bm{F}_2 =[\partial_yf_3(\bm{x}^{in}_i)-\partial_zf_2(\bm{x}^{in}_i)] \in \mathbb{R}^{I\times(M+1)},\\
&\partial_z \bm{F}_1-\partial_x \bm{F}_3 =[\partial_zf_1(\bm{x}^{in}_i)-\partial_xf_3(\bm{x}^{in}_i)] \in \mathbb{R}^{I\times(M+1)},\\
&\partial_x \bm{F}_2-\partial_y \bm{F}_1 =[\partial_xf_2(\bm{x}^{in}_i)-\partial_yf_1(\bm{x}^{in}_i)] \in \mathbb{R}^{I\times(M+1)}.
\end{align*}
To solve \eqref{NS3d-matv}, we turn to solve least-squares problem
\begin{equation}\label{NS3d-min_v}
\min_{\bm{\alpha}^{(k+1)}}
\left\Vert
\begin{bmatrix}{}
\nu \Delta^2 \bm{\Psi}_{in} + \bm{N}_{11}   & \bm{N}_{12}  &\bm{N}_{13}  \\
\bm{N}_{21}   & \nu \Delta^2\bm{\Psi}_{in}+\bm{N}_{22}    & \bm{N}_{23} \\
\bm{N}_{31}   & \bm{N}_{32}   & \nu \Delta^2 \bm{\Psi}_{in} +\bm{N}_{33} \\
\partial_x\bm{\Psi}_{in} & \partial_y\bm{\Psi}_{in} & \partial_z\bm{\Psi}_{in} \\
\bm{\Psi}_{bd}\bm{n}_1 & \bm{\Psi}_{bd}\bm{n}_2 & \bm{\Phi}_{bd}\bm{n}_3 \\
  &-\partial_z \bm{\Psi}_{bd}  &\partial_y \bm{\Psi}_{bd} \\
\partial_z \bm{\Psi}_{bd} &  &-\partial_x \bm{\Psi}_{bd}  \\
-\partial_y \bm{\Psi}_{bd}  &\partial_x \bm{\Psi}_{bd}  &
\end{bmatrix}
\begin{bmatrix}{}
\bm{\alpha}_{1}^{(k+1)} \\
\bm{\alpha}_{2}^{(k+1)} \\
\bm{\alpha}_{3}^{(k+1)}
\end{bmatrix}
-
\begin{bmatrix}{}
\tilde{\bm{F}}_1 \\
\tilde{\bm{F}}_2 \\
\tilde{\bm{F}}_3 \\
\bm{0} \\
\bm{0} \\
\bm{0} \\
\bm{0} \\
\bm{0} \\
\end{bmatrix}
\right\Vert_2^2.
\end{equation}
Once obtaining the minimizer $\bm{\alpha}^*$, we can reformulate \eqref{NS-pre} to the least-squares problem
\begin{equation}\label{NS3d-matp}
\min_{\bm{\beta}}\left\Vert
\begin{bmatrix}{}
\partial_x \bm{\Psi}_{in} \\
\partial_y \bm{\Psi}_{in} \\
\partial_z \bm{\Psi}_{in} \\
\bm{\Psi}_{0}^{\rm T}
\end{bmatrix}
\bm{\beta}
-
\begin{bmatrix}{}
\tilde{\bm{F}}_1  \\
\tilde{\bm{F}}_2  \\
\tilde{\bm{F}}_3  \\
{0}
\end{bmatrix}
\right\Vert_2^2,
\end{equation}
where
\begin{align*}
&\tilde{\bm{F}}_1 = [f_1(\bm{x}^{in}_i) + \nu\Delta u_1^*(\bm{x}^{in}_i)-(u_1^*(\bm{x}^{in}_i)\partial_x u_1^*(\bm{x}^{in}_i) + u_2^*(\bm{x}^{in}_i)\partial_y u_1^*(\bm{x}^{in}_i) + u_3^*(\bm{x}^{in}_i)\partial_z u_1^*(\bm{x}^{in}_i))] \in \mathbb{R}^{I},
\\
&\tilde{\bm{F}}_2 = [f_2(\bm{x}^{in}_i) + \nu\Delta u_2^*(\bm{x}^{in}_i)-(u_1^*(\bm{x}^{in}_i)\partial_x u_2^*(\bm{x}^{in}_i) + u_2^*(\bm{x}^{in}_i)\partial_y u_2^*(\bm{x}^{in}_i) + u_3^*(\bm{x}^{in}_i)\partial_z u_2^*(\bm{x}^{in}_i))] \in \mathbb{R}^{I},
\\
&\tilde{\bm{F}}_3 = [f_3(\bm{x}^{in}_i) + \nu\Delta u_3^*(\bm{x}^{in}_i)-(u_1^*(\bm{x}^{in}_i)\partial_x u_3^*(\bm{x}^{in}_i) + u_2^*(\bm{x}^{in}_i)\partial_y u_3^*(\bm{x}^{in}_i) + u_3^*(\bm{x}^{in}_i)\partial_z u_3^*(\bm{x}^{in}_i))] \in \mathbb{R}^{I},
\\
&\bm{\Psi}_{0} = [\psi_m(\bm{x}_0)] \in \mathbb{R}^{M+1}.
\end{align*}
The resulting pressure approximation is $p_{\rm NN} = \sum_{m=0}^M \beta^*_m\psi_m(\bm{x})$.

\subsection{Computational complexity}
Based on the previous discussion, the final step of the proposed method involves solving an over-determined linear least-squares problem of the form
\begin{equation}\label{LS}
	\min_{\bm{x}} \| \bm{A}\bm{x} - \bm{b} \|_2^2,
\end{equation}
where $\bm{A}\in\mathbb{R}^{m\times n}$ ($m>>n$), $\bm{x} \in \mathbb{R}^n$ and $\bm{b} \in \mathbb{R}^n$. The standard computational complexity for resolving such a system is $\mathcal{O}(mn^2)$. Both the current decoupled Decoupled-DFNN and the standard TransNet rely on solving \eqref{LS}, their relative efficiency can be compared by analyzing the dimensions of their respective matrices.

In our proposed framework, the velocity and pressure subproblems are solved sequentially, resulting in smaller and independent linear systems. In contrast, the standard TransNet approach solves all variables simultaneously, which leads to a larger coupled system. The computational complexities for the two-dimensional and three-dimensional cases are summarized in Table \ref{tab-complexity}, where $I$ and $J$ denote the number of interior and boundary collocation points, and $M$ represents the number of hidden nodes.

\begin{table}[H]
\small
\centering
  \begin{tabular}{ccc}
    \hline
    Method & Decoupled-DFNN &TransNet\\
    \hline
    2d  & $\mathcal{O}\big((I+2J)(M+1)^2\big) + \mathcal{O}\big((2I+1)(M+1)^2\big)$   & $\mathcal{O}\big((3I+2J)(3M+3)^2\big)$
    \\
    3d  & $\mathcal{O}\big((4I+4J)(3M+3)^2\big) + \mathcal{O}\big((3I+1)(M+1)^2\big)$ & $\mathcal{O}\big((4I+3J)(4M+4)^2\big)$
    \\
    \hline
  \end{tabular}
  \caption{Comparison of computational complexity per iteration for 2D and 3D problems.}\label{tab-complexity}
\end{table}

\section{Numerical experiments}
In this section, we evaluate the performance of the proposed Decoupled-DFNN method for incompressible flow problems. Our experiments include numerical comparisons of Decoupled-DFNN with both TransNet and PINN. In PINN framework, we shall use the ResNet \cite{ResNet} with width $M=30$ and depth $L=4$ with an activation function $\sigma(x) = \tanh(x)$ to approximate the velocity and pressure, and the parameters are determined by the Adam optimizer \cite{Adam}. To quantify the accuracy of the numerical results, we define the relative $L^2$ error between the exact solution $g$ and the neural network approximation $g_{\rm NN}$ as
\[
	\mbox{error}\_ g = \frac{\sqrt{\frac{1}{N}\sum_{i=1}^N [g(\bm{x}_i) - g_{\rm NN}(\bm{x}_i)]^2}}{\sqrt{\frac{1}{N}\sum_{i=1}^N [g(\bm{x}_i)]^2}}.
\]
To assess the enforcement of the incompressibility constraint, we further define the absolute divergence error as
\[
	\mbox{error}\_\mbox{div} = \sqrt{\frac{1}{N}\sum_{i=1}^N [\dive \bm{g}_{\rm NN}(\bm{x}_i)]^2}.
\]
For simplicity, the velocity field is denoted by $\bm{u}(\bm{x}) = [u(\bm{x}), v(\bm{x})]^\intercal$ for 2D cases, and $\bm{u}(\bm{x}) = [u(\bm{x}), v(\bm{x}), w(\bm{x})]^\intercal$ for 3D case.

All experiments are implemented on a personal laptop equipped with an Intel Core i9-12900H (2.50 GHz) CPU, 16 GB RAM, and an NVIDIA GeForce RTX 3060 GPU. The algorithms are programmed by Python3.12 using PyTorch 2.1.

\subsection{2D Stokes problem}\label{sub:2d_stokes_problem}
Consider the Stokes problem defined on $\Omega = (0,2) \times (-0.5, 1.5)$, for which the exact solution is given by
\[
u = 1- e^{\zeta x} \cos(2\pi y), \quad v = \frac{\zeta}{2\pi}e^{\zeta x} \sin(2\pi y),
\quad
p = \frac{1}{2}e^{2\zeta x},
\]
where $\zeta = \frac{1}{2\nu} - \sqrt{\frac{1}{4\nu^2} + 4 \pi^2}$.

In this test case, we select $2500(50\times50)$ interior and $200(50\times4)$ boundary uniform grid points as the training samples, while an additional $12321(111\times111)$ uniform grid points are chosen as test samples.

\begin{figure}[H]
\centering
    \begin{minipage}[t]{0.32\linewidth}
    \subfloat[The relative errors of $\bm{u}$.]
    {\includegraphics[scale=0.5]{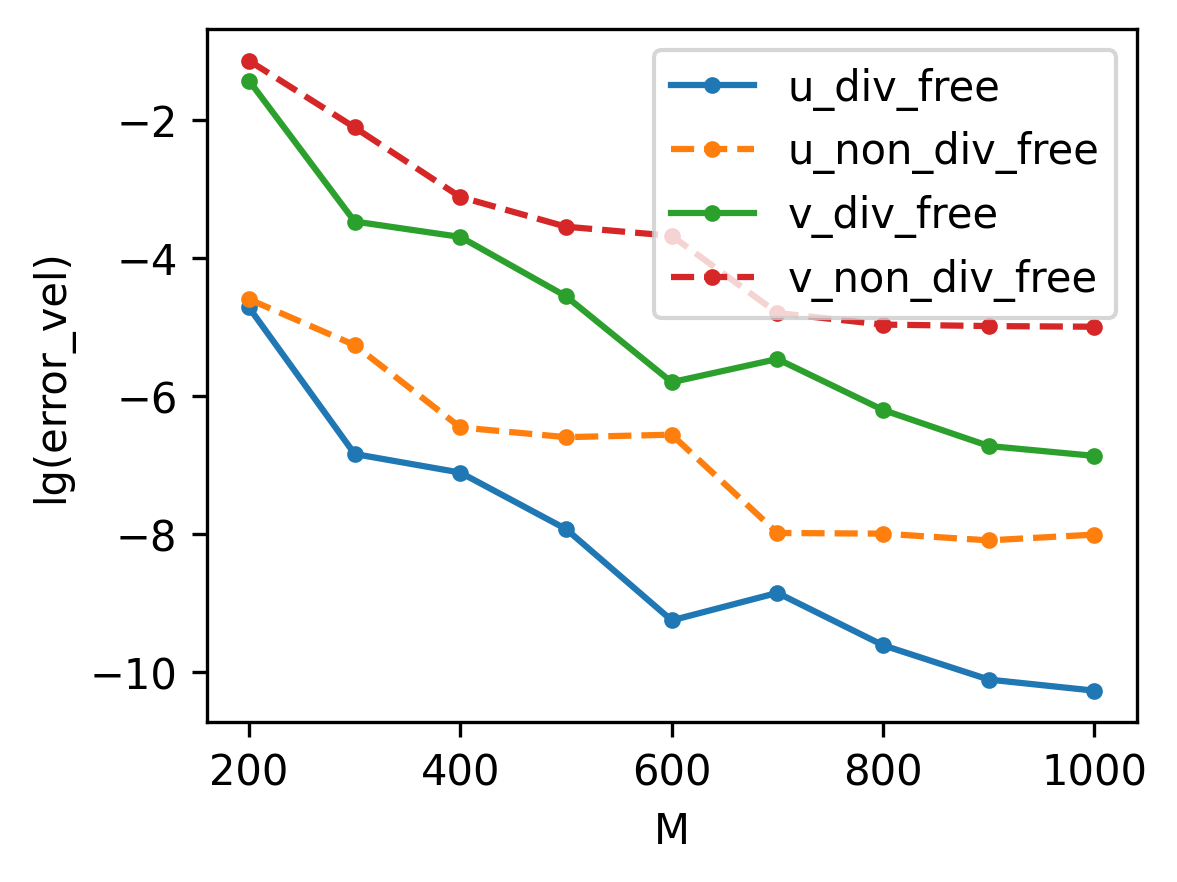}}
  \end{minipage}
  \begin{minipage}[t]{0.32\linewidth}
    \subfloat[The absolute errors of $\dive \bm{u}$.]
    {\includegraphics[scale=0.5]{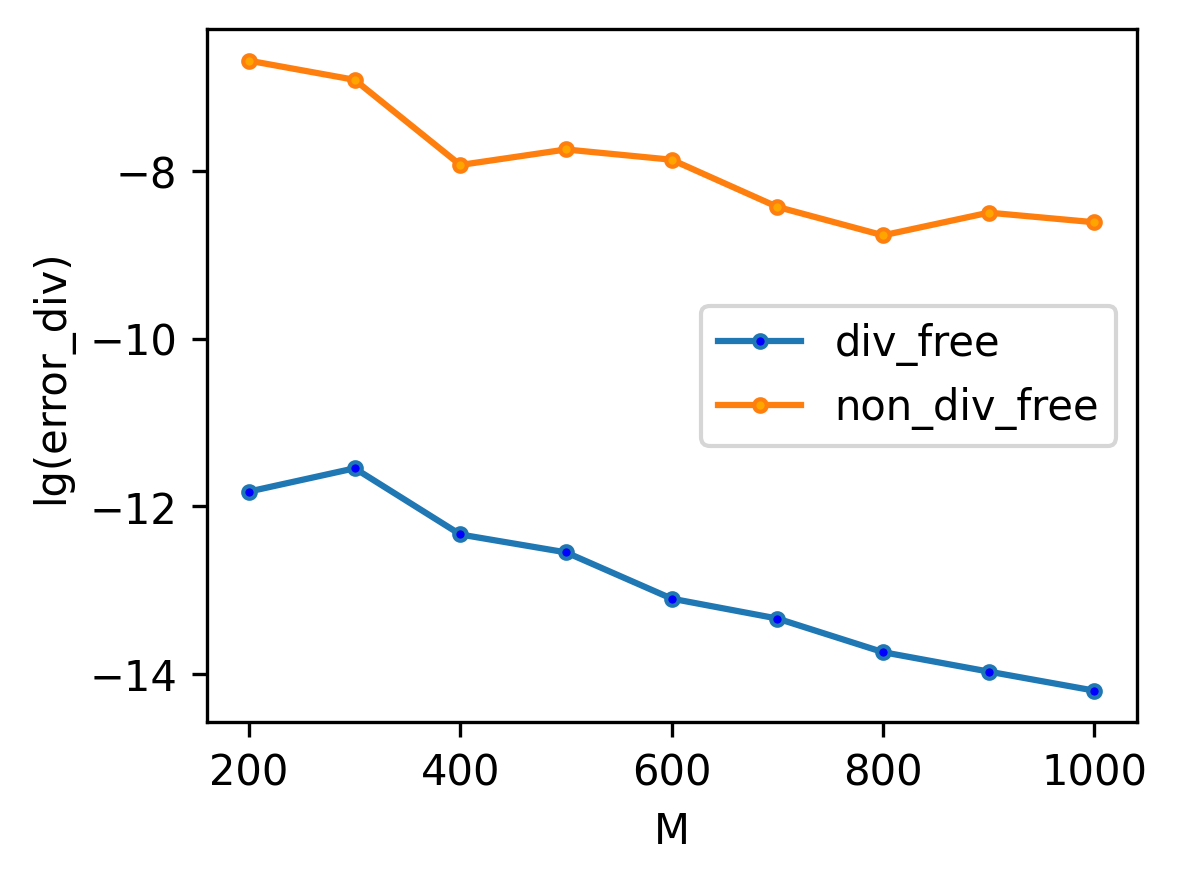}}
  \end{minipage}
  \begin{minipage}[t]{0.32\linewidth}
    \subfloat[The relative errors of $p$.]
    {\includegraphics[scale=0.5]{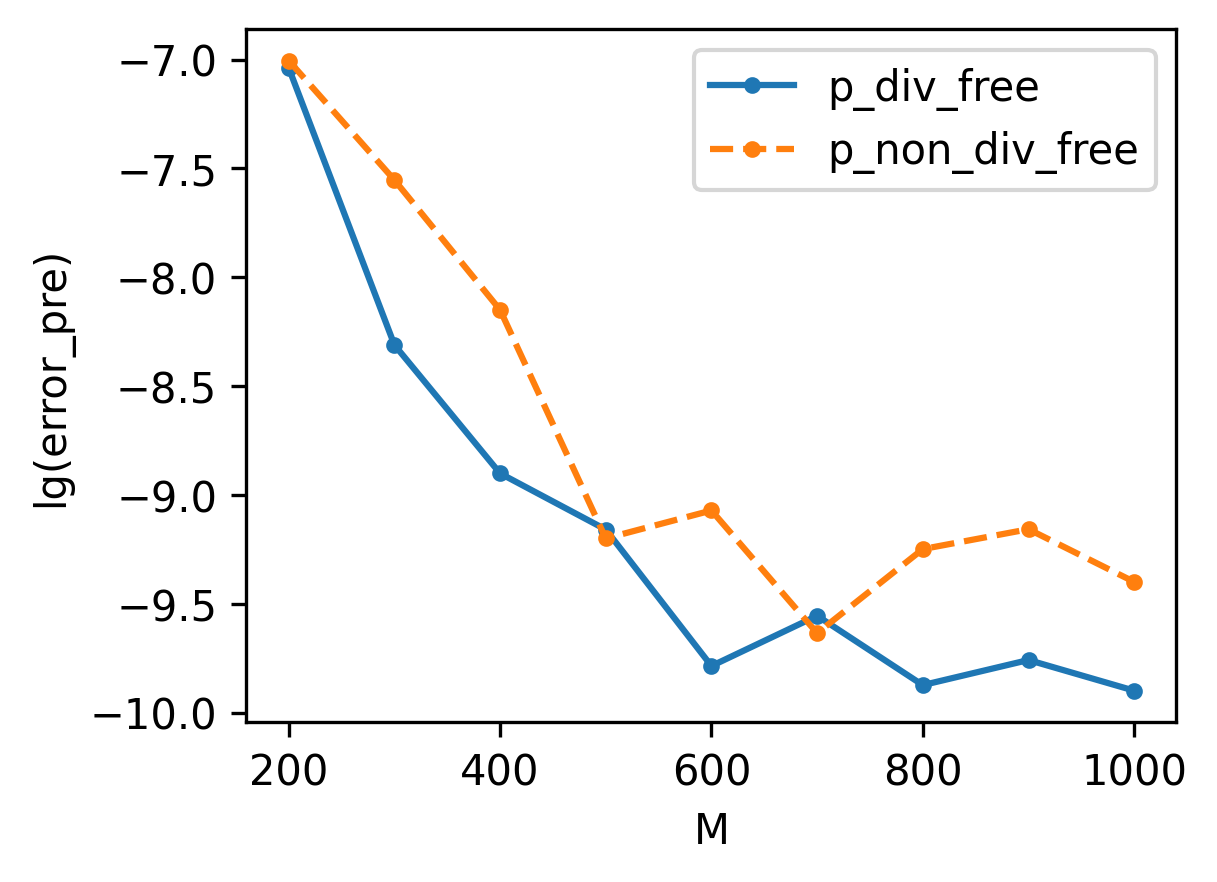}}
  \end{minipage}
  \caption{Numerical results for the 2D Stokes problem with $\nu = 10^{-4}$ and varying numbers of basis functions.}\label{fig-2d-1}
\end{figure}

We first compare the numerical performance of our proposed Decoupled-DFNN method with that of the standard TransNet. For a fixed kinematic viscosity $\nu = 10^{-4}$, Figure~\ref{fig-2d-1} shows the errors in velocity, divergence, and pressure with varying numbers of basis functions. The results indicate that Decoupled-DFNN not only achieves higher accuracy for velocity and pressure but also satisfies the incompressibility constraint more effectively, reaching a precision of at least $\mathcal{O}(10^{-12})$.

Moreover, Figure~\ref{fig-2d-2} demonstrates the superior efficiency of our method over the standard TransNet. While the computational cost of both approaches increases with the number of basis functions, the efficiency advantage of Decoupled-DFNN becomes more pronounced for larger basis sizes. This improvement is mainly due to the fact that the proposed method solves smaller and decoupled subproblems, rather than a large coupled system.

\begin{figure}[H]
\centering
    {\includegraphics[scale = 0.6]{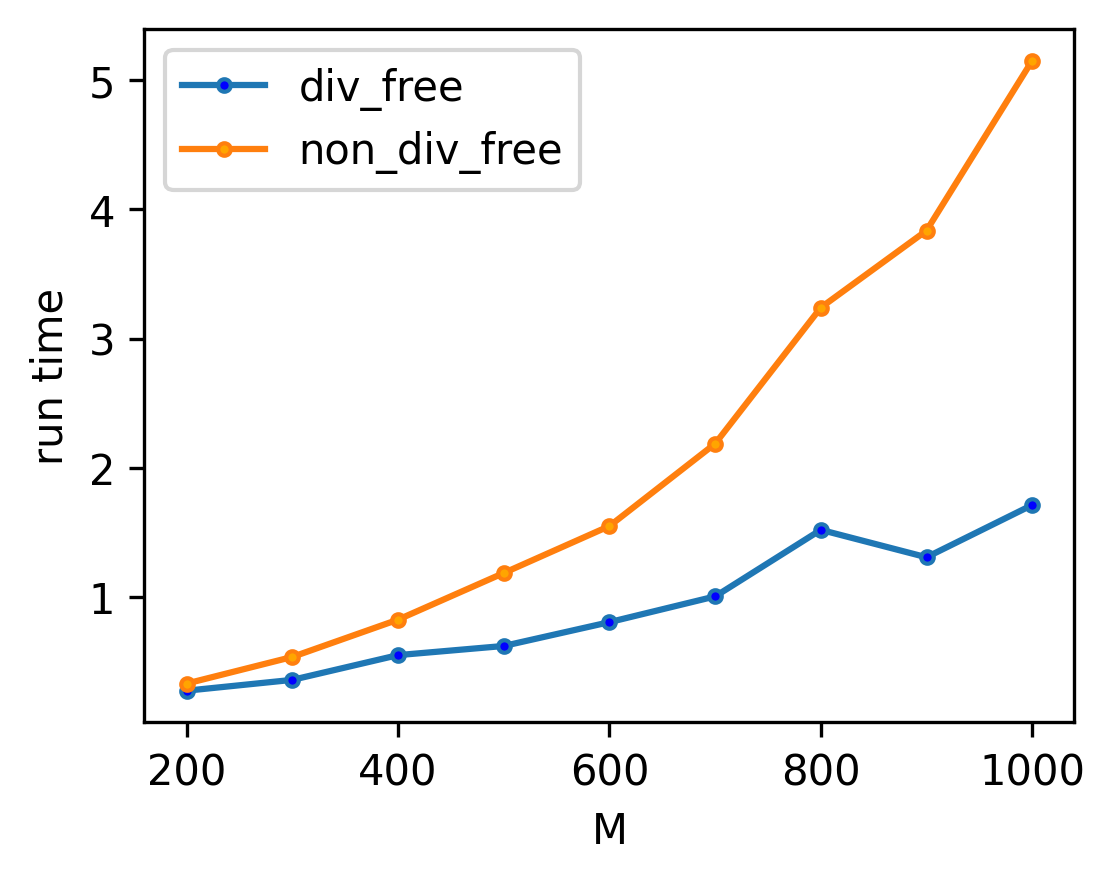}}
  \caption{Execution time for the 2D Stokes problem with $\nu = 10^{-4}$ across varying numbers of basis functions.}\label{fig-2d-2}
\end{figure}

Next, we examine numerical performance across varying kinematic viscosity $\nu$ with $M=1000$. As illustrated in Figure~\ref{fig-2d-3}, Decoupled-DFNN demonstrates significantly superior performance in preserving the divergence-free condition. In terms of velocity accuracy, TransNet achieves slightly better results for larger viscosities, whereas Decoupled-DFNN performs significantly better for smaller viscosities ($\nu \leq 10^{-2}$), outperforming TransNet by nearly two orders of magnitude. The pressure accuracy remains largely comparable between the two methods. This enhanced precision at lower viscosities underscores the robustness of the Decoupled-DFNN framework in high Reynolds-number regimes. Furthermore, variations in $\nu$ have little effect on the computational cost. Overall, the execution time for Decoupled-DFNN is approximately 2 seconds, whereas the standard TransNet requires slightly over 5 seconds.

\begin{figure}[H]
\centering
    \begin{minipage}[t]{0.32\linewidth}
    \subfloat[The relative errors of $\bm{u}$.]
    {\includegraphics[scale=0.5]{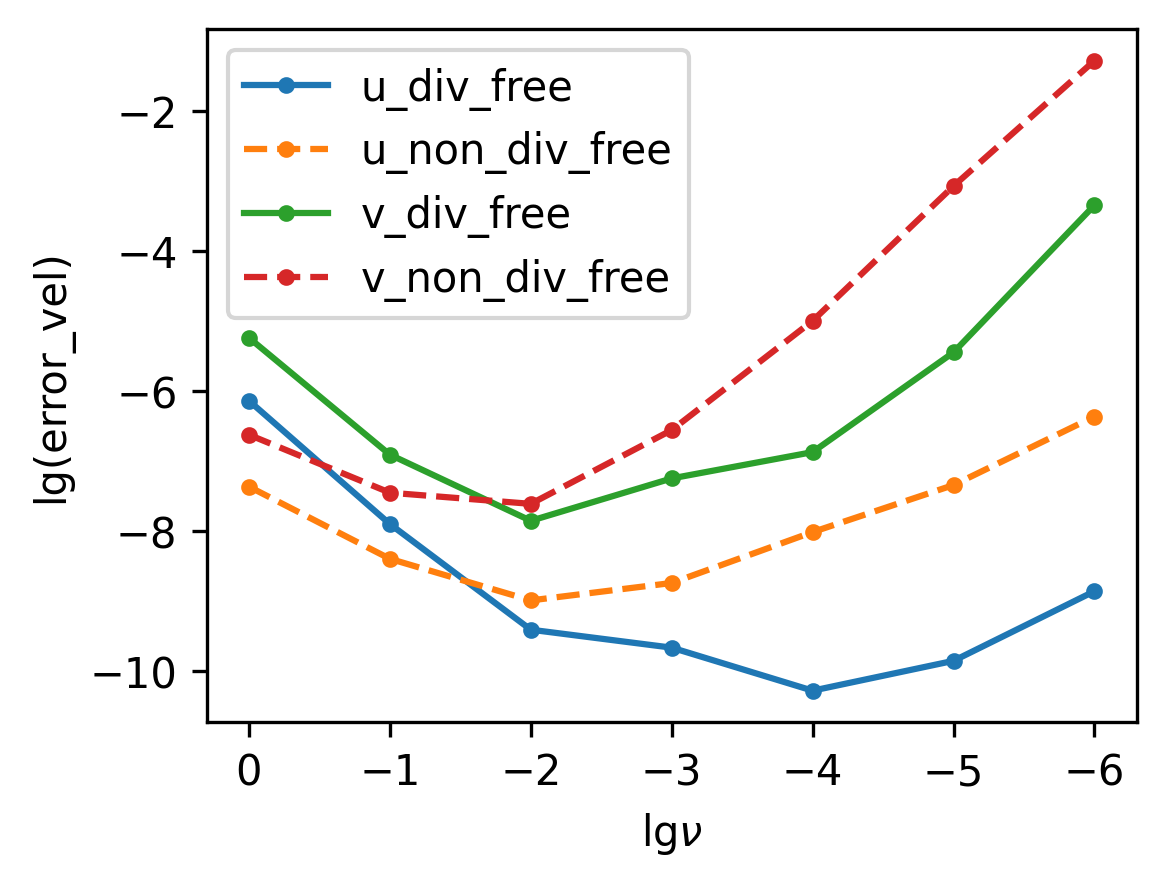}}
  \end{minipage}
  \begin{minipage}[t]{0.32\linewidth}
    \subfloat[The absolute errors of $\dive \bm{u}$.]
    {\includegraphics[scale=0.5]{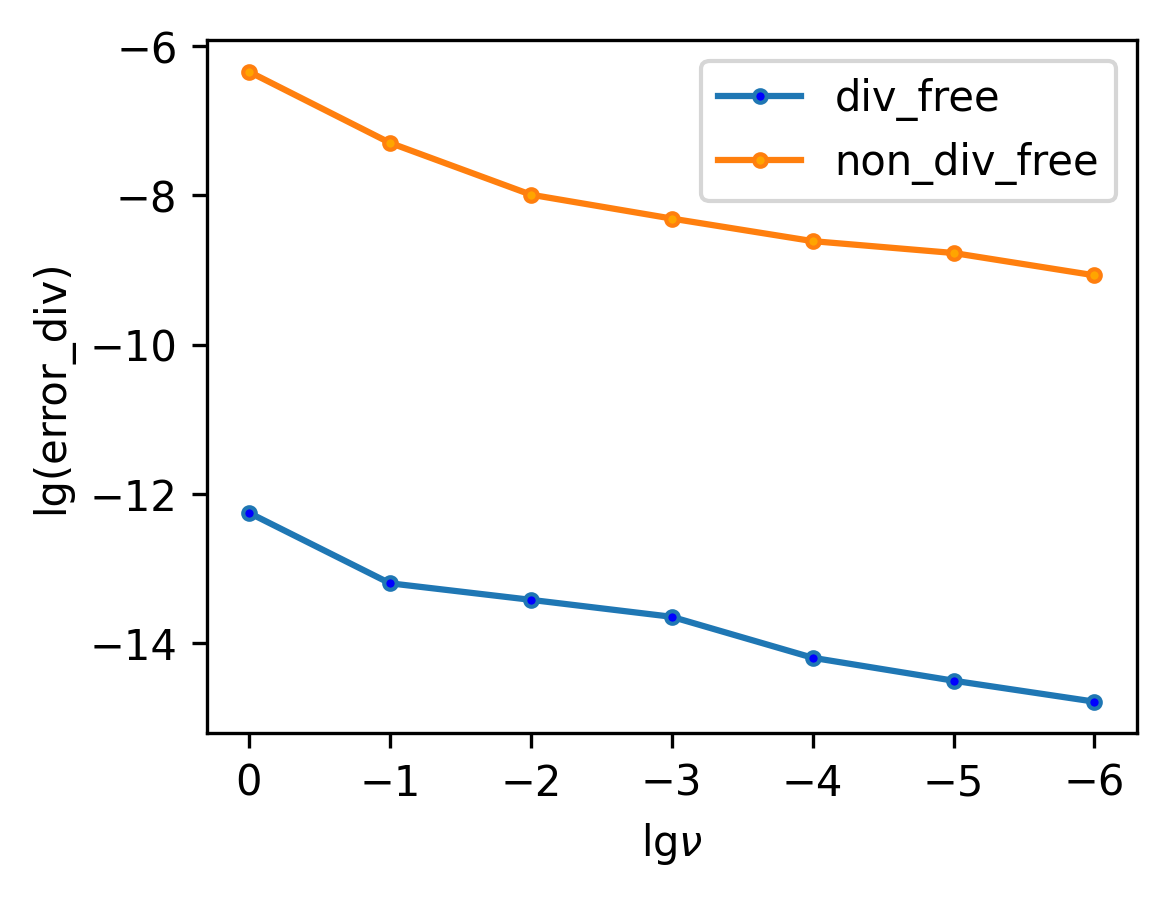}}
  \end{minipage}
  \begin{minipage}[t]{0.32\linewidth}
    \subfloat[The relative errors of $p$.]
    {\includegraphics[scale=0.5]{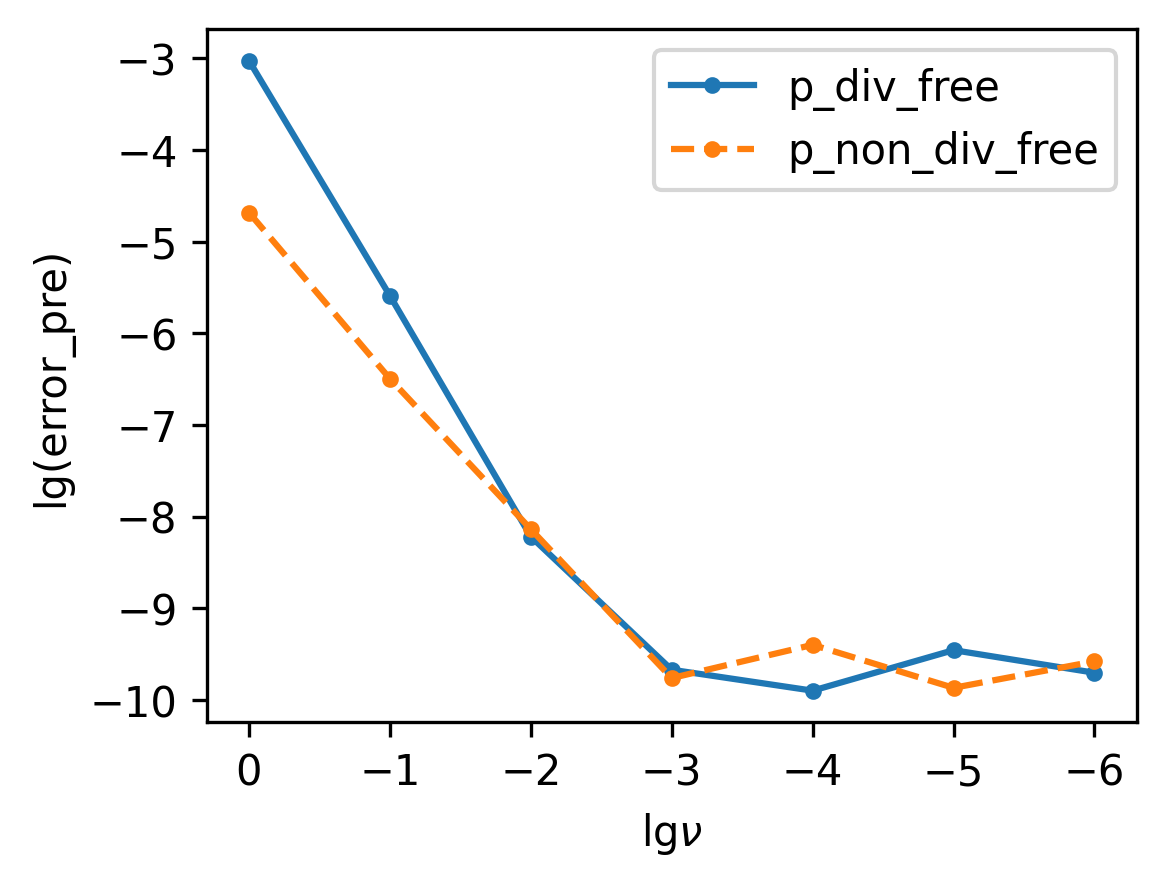}}
  \end{minipage}
  \caption{Numerical results for the 2D Stokes problem with $M=1000$ and varying kinematic viscosity. }\label{fig-2d-3}
\end{figure}

\begin{figure}[H]
\centering
    \begin{minipage}[t]{0.45\linewidth}
    \subfloat[The relative errors of $\bm{u}$.]
    {\includegraphics[scale=0.5]{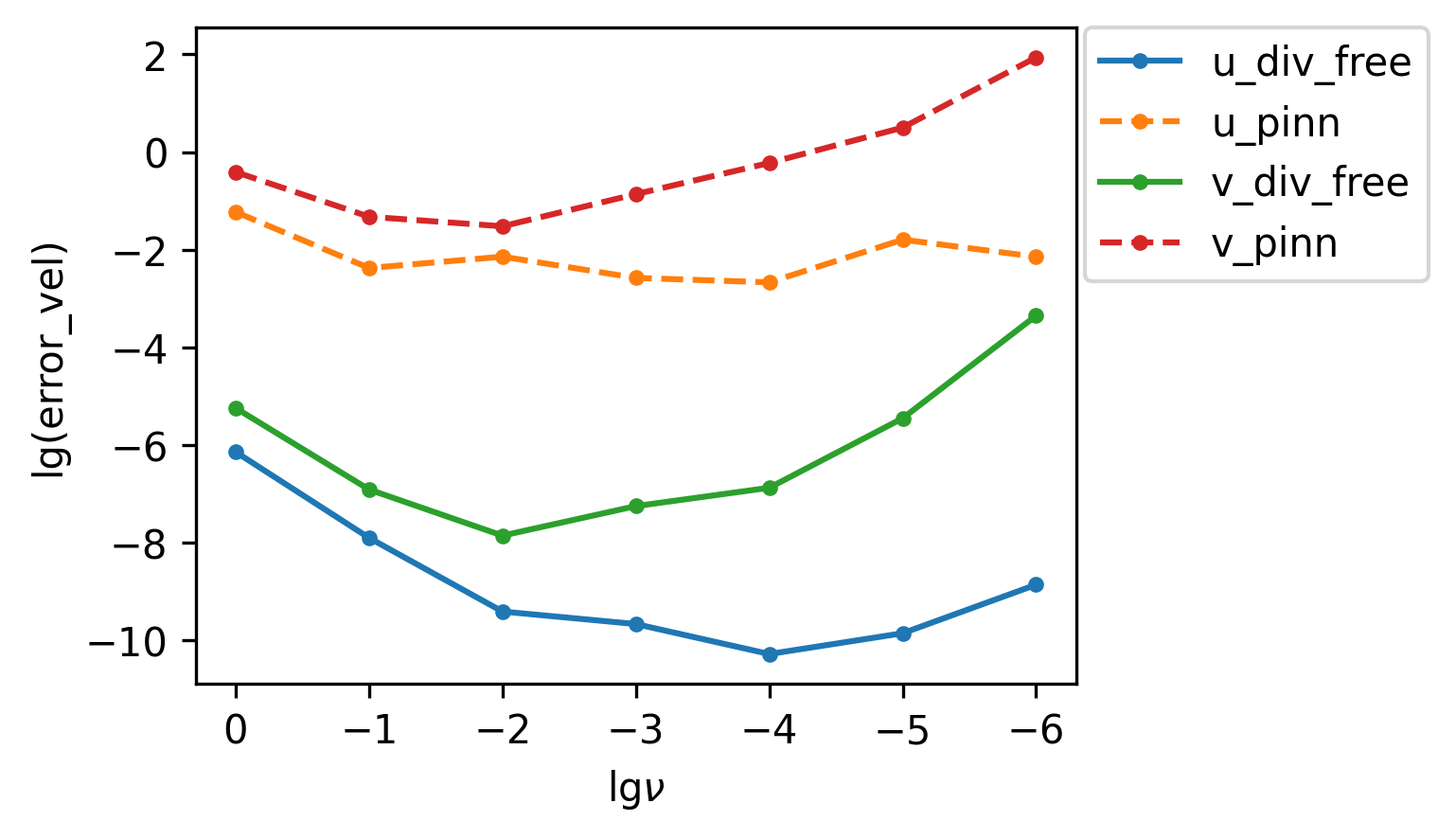}}
  \end{minipage}
  \begin{minipage}[t]{0.45\linewidth}
    \subfloat[The absolute errors of $\dive \bm{u}$.]
    {\includegraphics[scale=0.5]{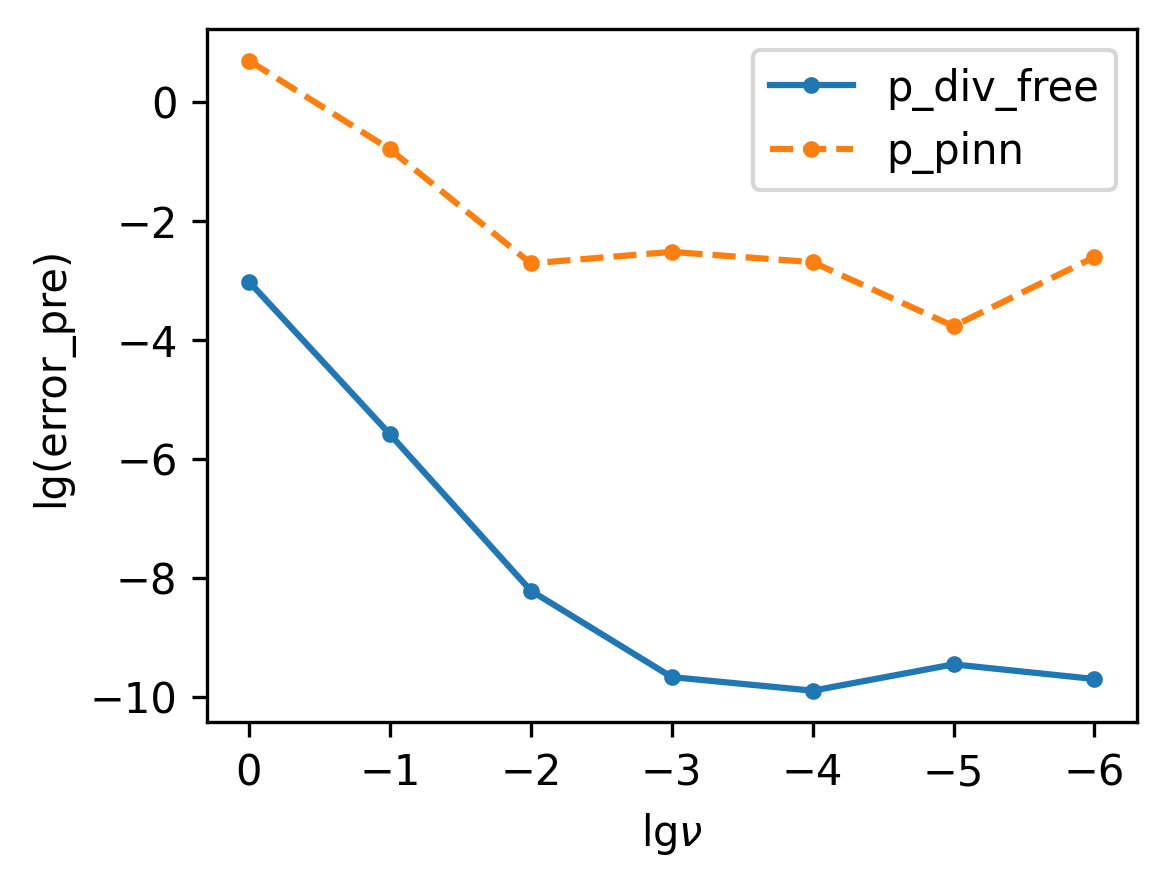}}
  \end{minipage}
  \\
  \begin{minipage}[t]{0.45\linewidth}
    \subfloat[The relative errors of $p$.]
    {\includegraphics[scale=0.5]{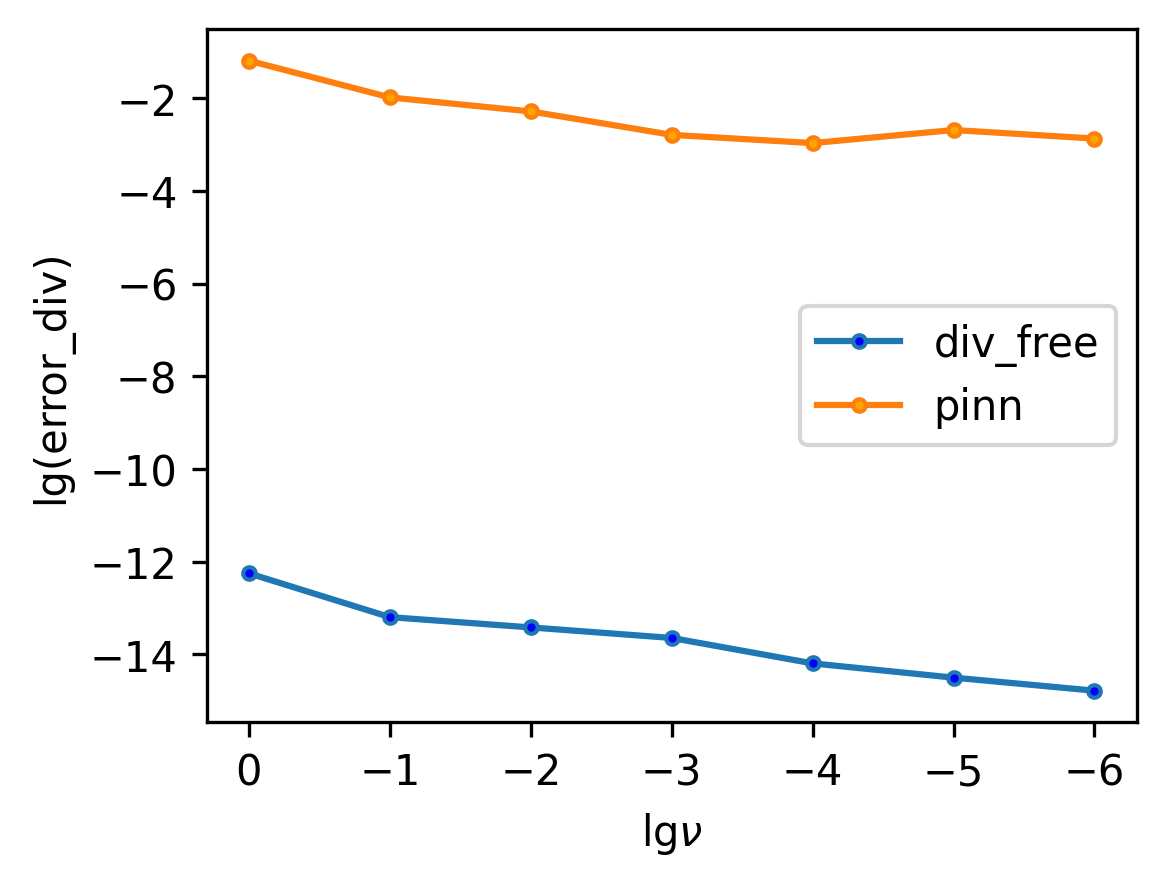}}
  \end{minipage}
    \begin{minipage}[t]{0.45\linewidth}
    \subfloat[The execution time.]
    {\includegraphics[scale=0.5]{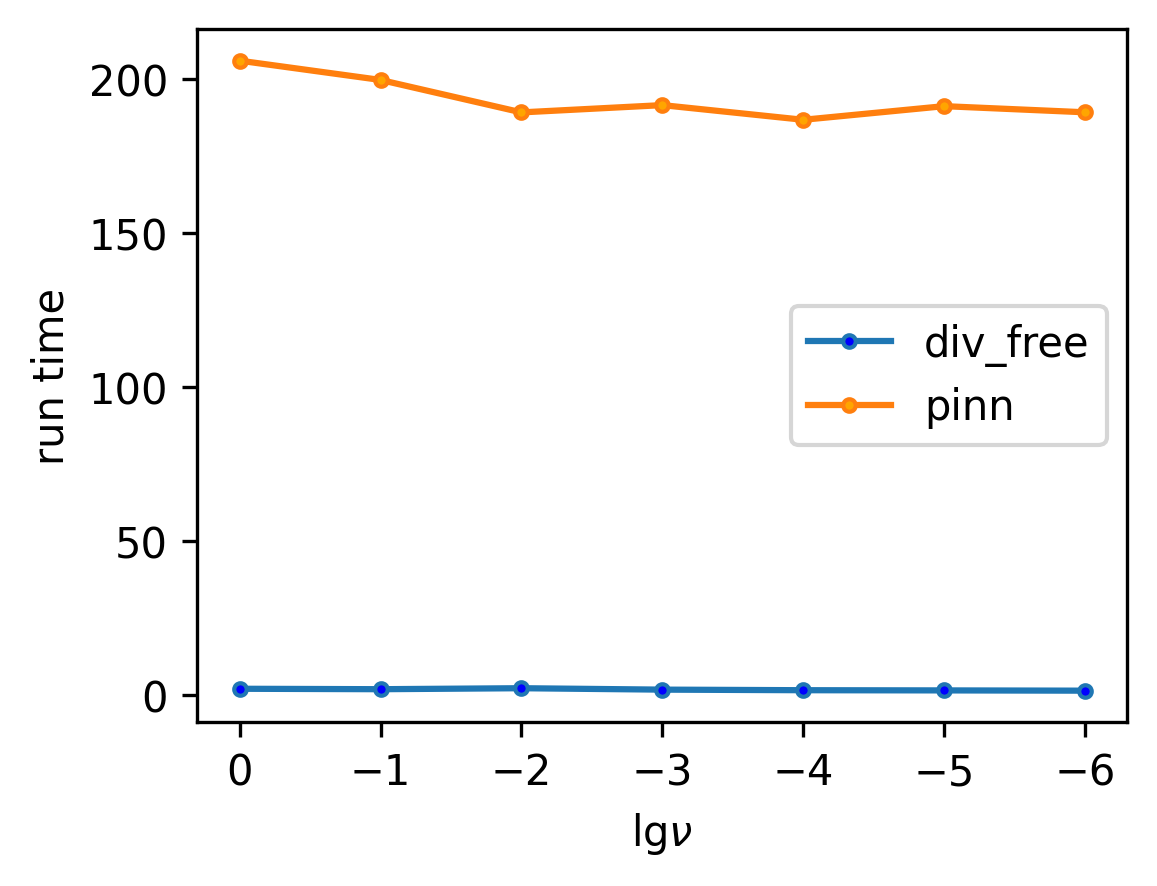}}
  \end{minipage}
  \caption{Numerical results of Decoupled-DFNN and PINN for the 2D Stokes problem.}\label{fig-2d-4}
\end{figure}

Finally, we compare the performance of the proposed Decoupled-DFNN with PINN. For the PINN baseline, we employ 512 interior points and 256 boundary points and train the model for 10000 iterations. As shown in Figure~\ref{fig-2d-4}, Decoupled-DFNN exhibits a clear advantage over PINN in both accuracy and computational efficiency. In particular, for velocity and pressure predictions, Decoupled-DFNN achieves errors that are at least four orders of magnitude smaller than those of PINN. Notably, in low-viscosity regimes, PINN suffers from a complete loss of numerical accuracy, whereas the proposed method remains stable and accurate. In terms of enforcing physical constraints, Decoupled-DFNN achieves a relative divergence error on the order of $\mathcal{O}(10^{-12})$. By contrast, PINN, which imposes incompressibility through a penalty term in the loss function, attain divergence errors of only $\mathcal{O}(10^{-3})$. Moreover, Decoupled-DFNN solves the resulting linear least-squares problem in approximately 2 seconds, while PINNs require about 180 seconds to converge due to the non-convex and nonlinear nature of their optimization problem.

\subsection{2D Navier-Stokes problem}\label{sub:2d_navier_stokes_problem}
Consider the Navier-Stokes problem defined on $\Omega = (0,1)^2$, with the exact solution
\[
	u = 16y(y-1)(2y-1)\sin^2(\pi x), \quad v = -8\pi y^2(y-1)^2\sin(2 \pi x), \quad p = \sin(\pi x)\cos(\pi y).
\]
We employ the same training and test sets as those in subsection \ref{sub:2d_stokes_problem}. Since the Gauss–Newton method is locally convergent, the PINN approximation obtained after 10000 training iterations is used as the initial guess. The maximum number of Gauss–Newton iterations is set to 40.

The numerical performance for $M=1000$ across varying kinematic viscosity $\nu$ is showed in Figure~\ref{fig-2d-5}. We observe that for $\nu \leq 10^{-2}$, TransNet and Decoupled-DFNN achieve comparable accuracy in both the velocity and pressure fields. This agreement in accuracy is expected, as both methods adopt the same initial guess and employ Newton-type iterations to solve the underlying nonlinear system. However, Decoupled-DFNN reduces the computational cost by more than $60\%$. This efficiency gain arises from the decoupled formulation, which involves solving significantly smaller-scale linear systems during each iteration compared to the fully coupled approach. Furthermore, Decoupled-DFNN maintains superior performance in strictly preserving the divergence-free constraint.

\begin{figure}[H]
\centering
    \begin{minipage}[t]{0.45\linewidth}
    \subfloat[The relative errors of $\bm{u}$.]
    {\includegraphics[scale=0.5]{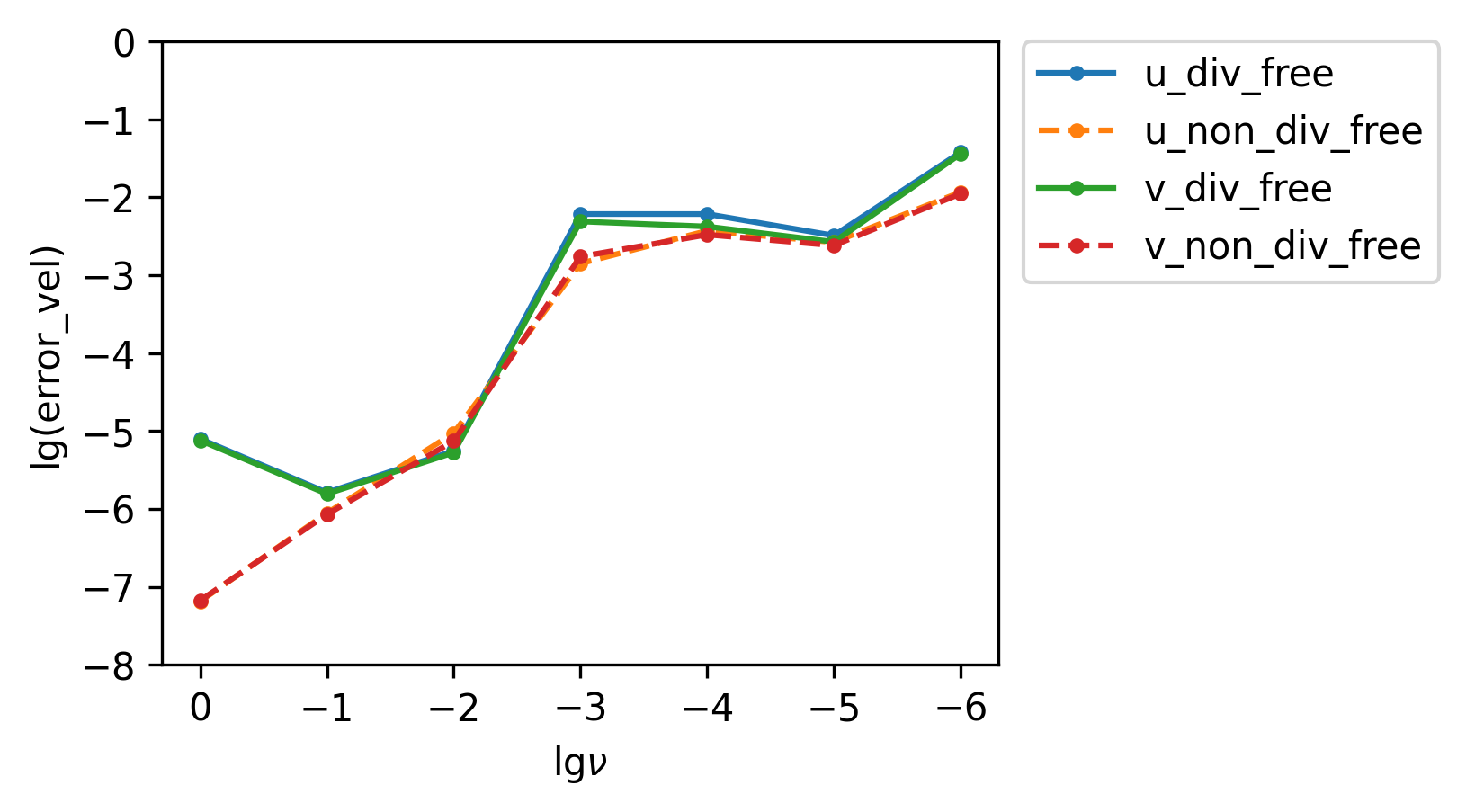}}
  \end{minipage}
  \begin{minipage}[t]{0.45\linewidth}
    \subfloat[The absolute errors of $\dive \bm{u}$.]
    {\includegraphics[scale=0.5]{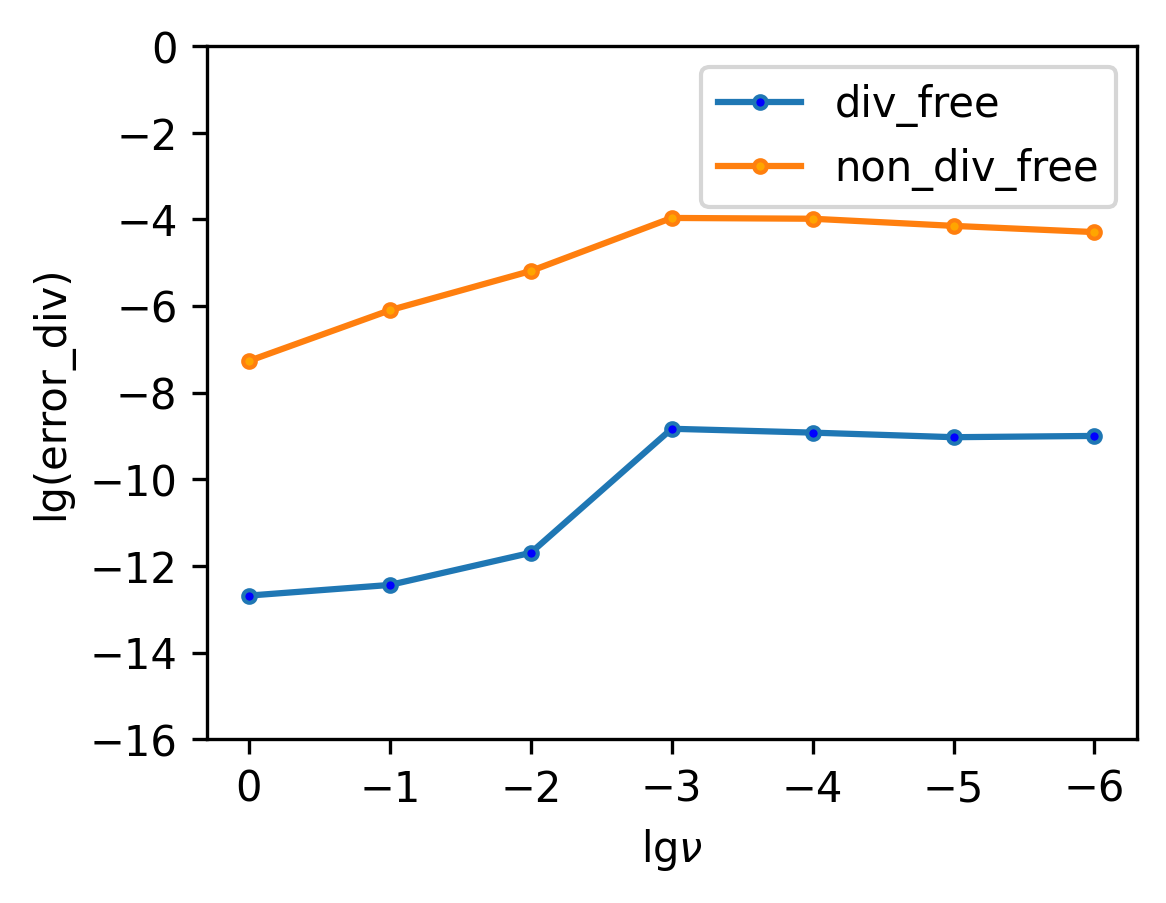}}
  \end{minipage}
  \begin{minipage}[t]{0.45\linewidth}
    \subfloat[The relative errors of $p$.]
    {\includegraphics[scale=0.5]{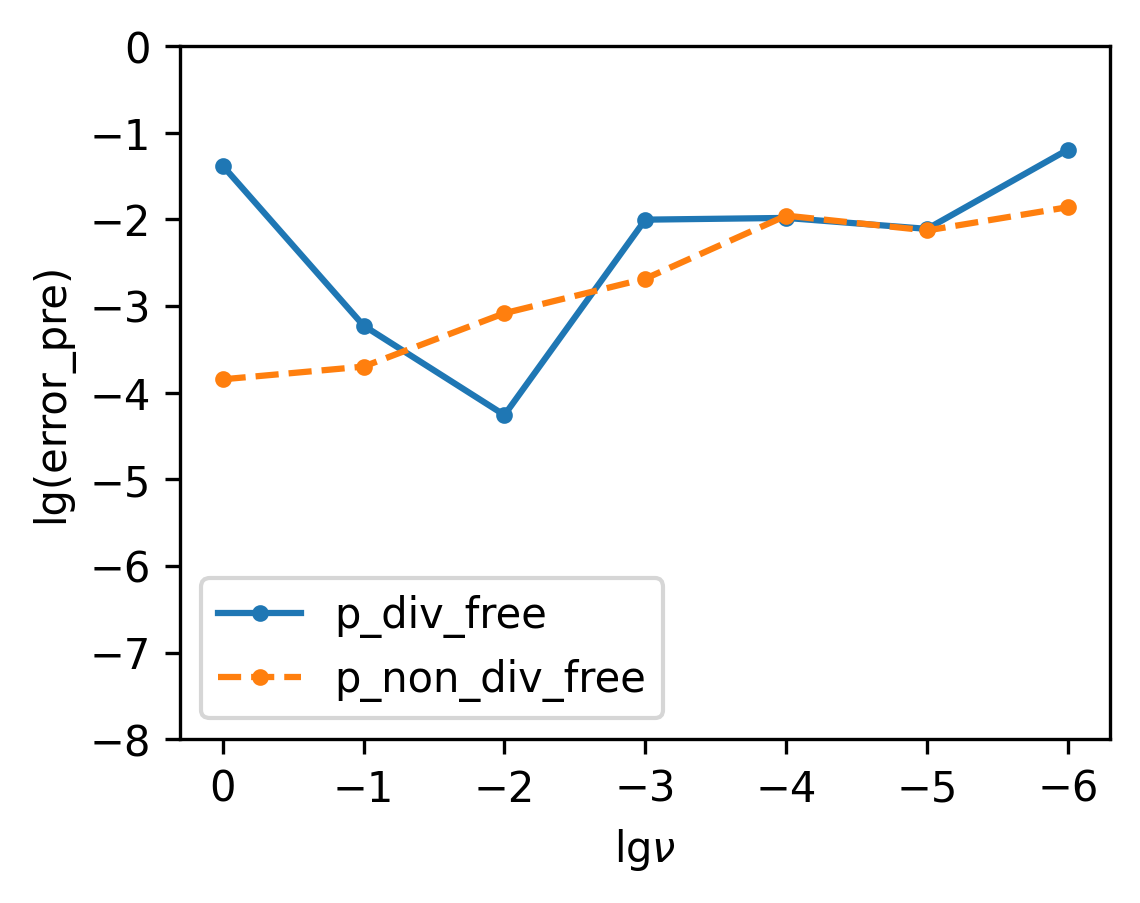}}
  \end{minipage}
   \begin{minipage}[t]{0.45\linewidth}
    \subfloat[The execution time.]
    {\includegraphics[scale=0.5]{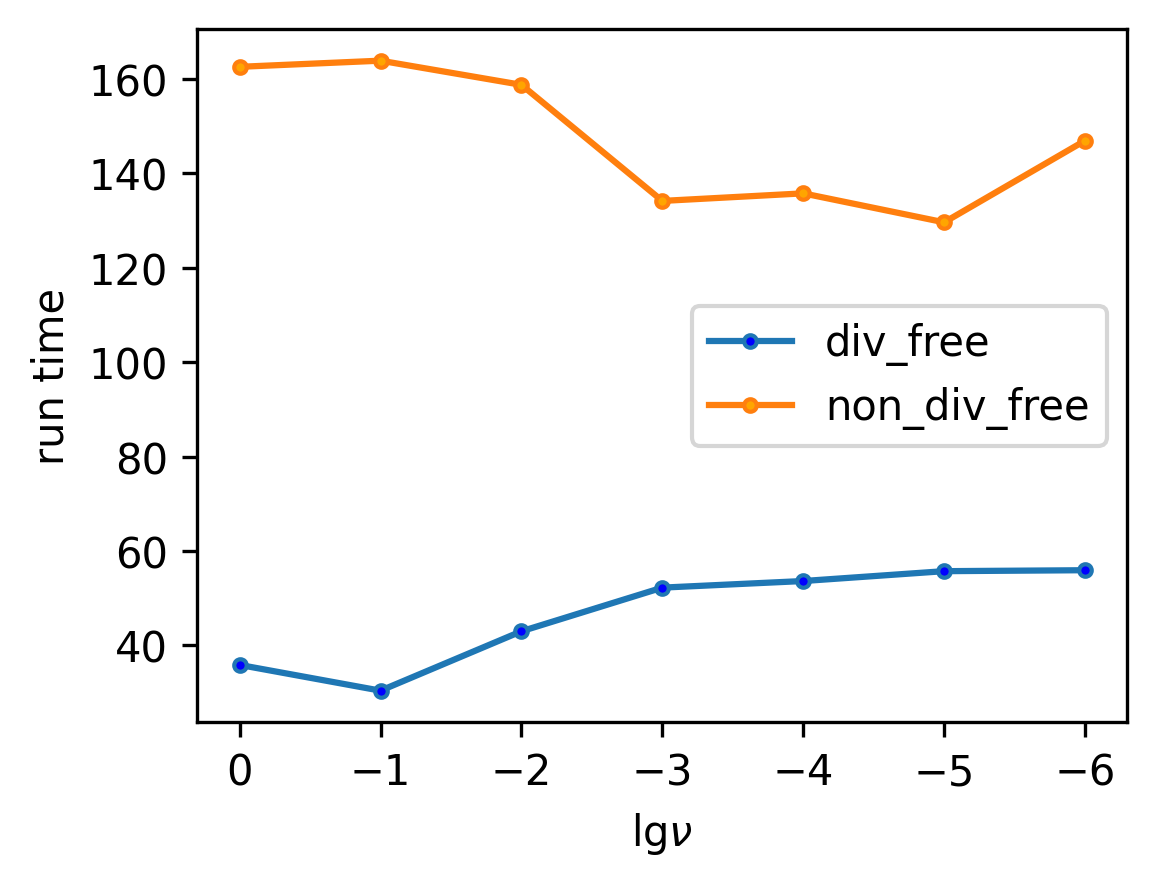}}
  \end{minipage}
  \caption{Numerical results for the 2D Navier-Stokes problem with $M=1000$ and varying $\nu$. }\label{fig-2d-5}
\end{figure}

\subsection{3D Stokes problem}\label{sub:3d_stokes_problem}
Consider the Stokes problem defined on $\Omega = (0,1)^3$, for which the analytical solution is given by
\begin{align*}
&u = e^{\cos(\pi y)}\sin(\pi z),
\quad
v = e^{\cos(\pi z)}\sin(\pi x),
\quad
w = e^{\cos(\pi x)}\sin(\pi y),
\\
&p = e^{\cos(\pi x) + \sin(\pi y)} + e^{\cos(\pi z) + \sin(\pi x)}.
\end{align*}
For three-dimensional cases, we employ a Halton sampling strategy \cite{Caflisch1998} to select $10000$ interior points and $2400$ boundary points (distributed as $20 \times 20 \times 6$ uniform grids) for the training set. Additionally, $2000$ interior Halton samples are chosen as the test set.

Consistent with the 2D results, we first compare the numerical results of Decoupled-DFNN and TransNet using varying numbers of basis functions, as shown in Figure \ref{fig-3d-1}. We observe that the approximate velocity field of Decoupled-DFNN is more accurate than that of TransNet for all values of $M$. While TransNet outperforms Decoupled-DFNN in pressure accuracy by approximately one order of magnitude when $M$ is large, its execution time is nearly twice as long. Furthermore, the Decoupled-DFNN exhibits a significant advantage in its structure-preserving capability, surpassing TransNet by six orders of magnitude. With a divergence error of only $10^{-14}$, the incompressibility condition is satisfied almost exactly.

\begin{figure}[H]
\centering
    \begin{minipage}[t]{0.45\linewidth}
    \subfloat[The relative errors of $\bm{u}$.]
    {\includegraphics[scale=0.5]{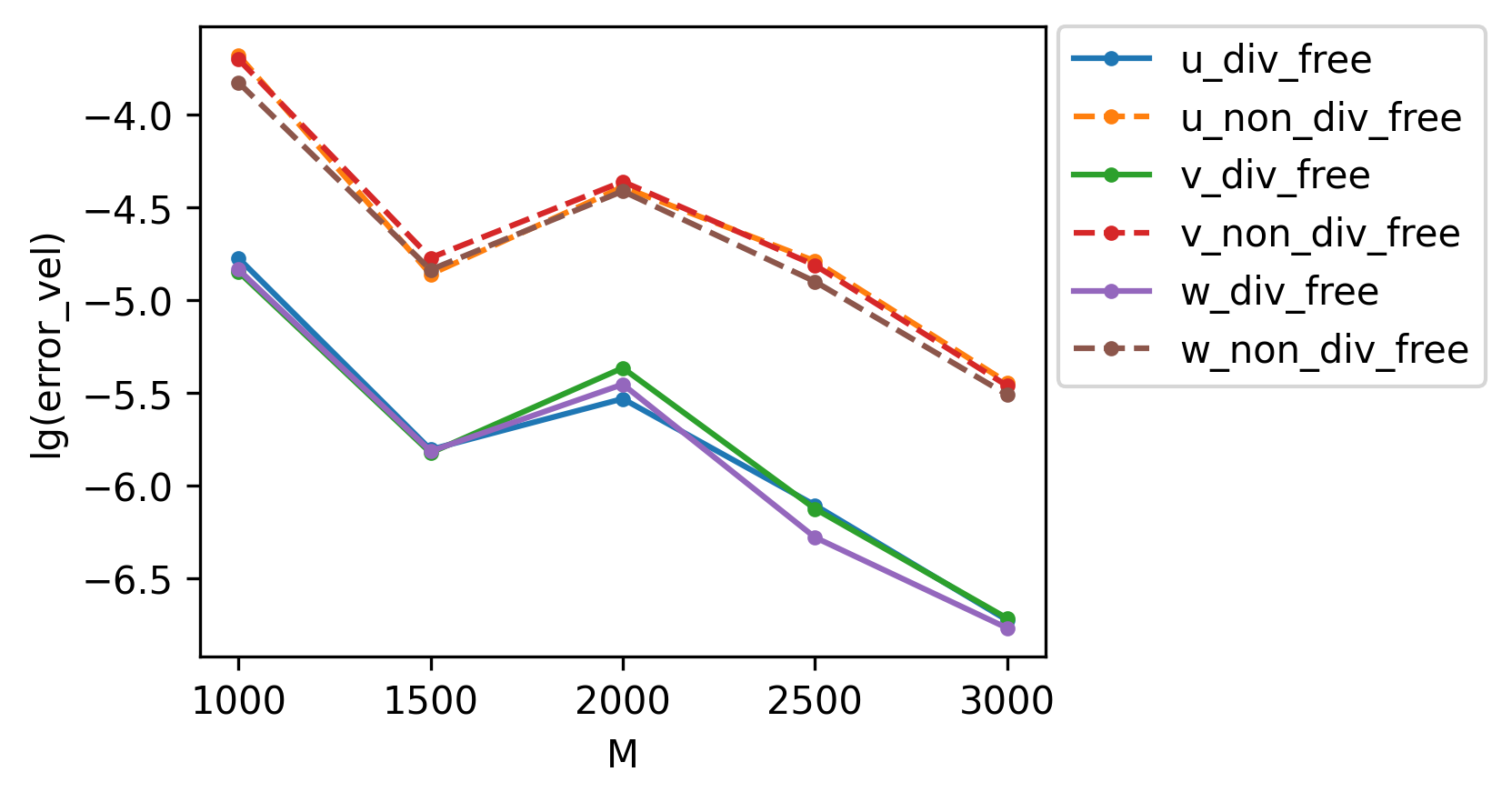}}
  \end{minipage}
  \begin{minipage}[t]{0.45\linewidth}
    \subfloat[The absolute errors of $\dive \bm{u}$.]
    {\includegraphics[scale=0.5]{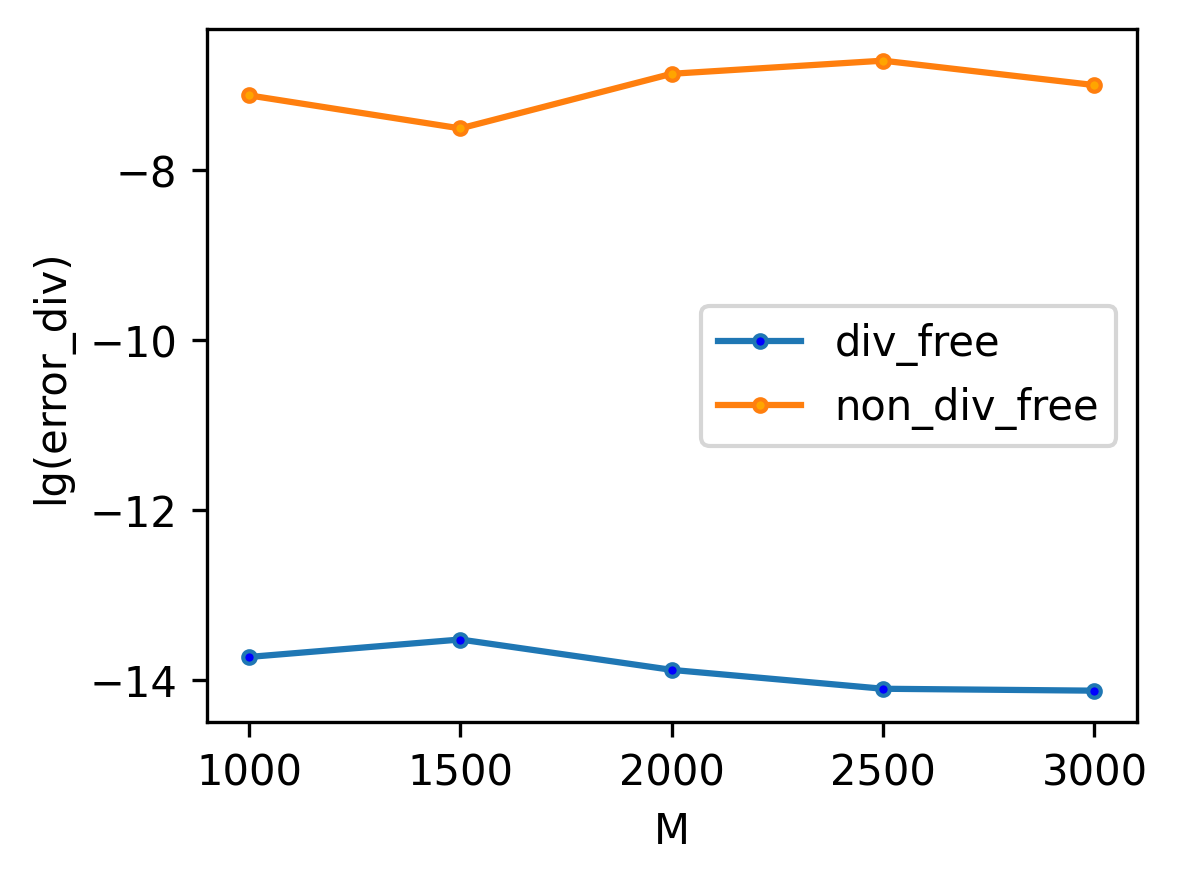}}
  \end{minipage}
  \begin{minipage}[t]{0.45\linewidth}
    \subfloat[The relative errors of $p$.]
    {\includegraphics[scale=0.5]{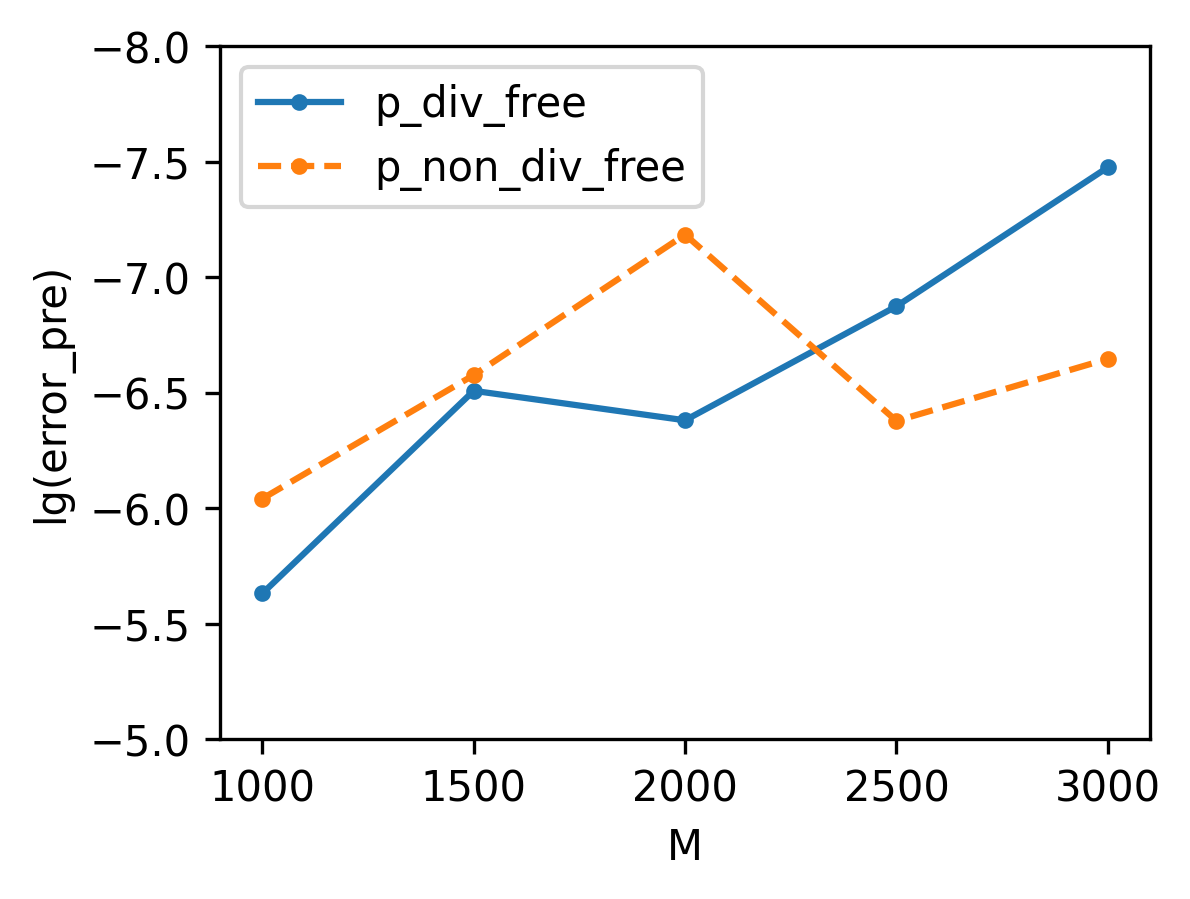}}
  \end{minipage}
   \begin{minipage}[t]{0.45\linewidth}
    \subfloat[The execution time.]
    {\includegraphics[scale=0.5]{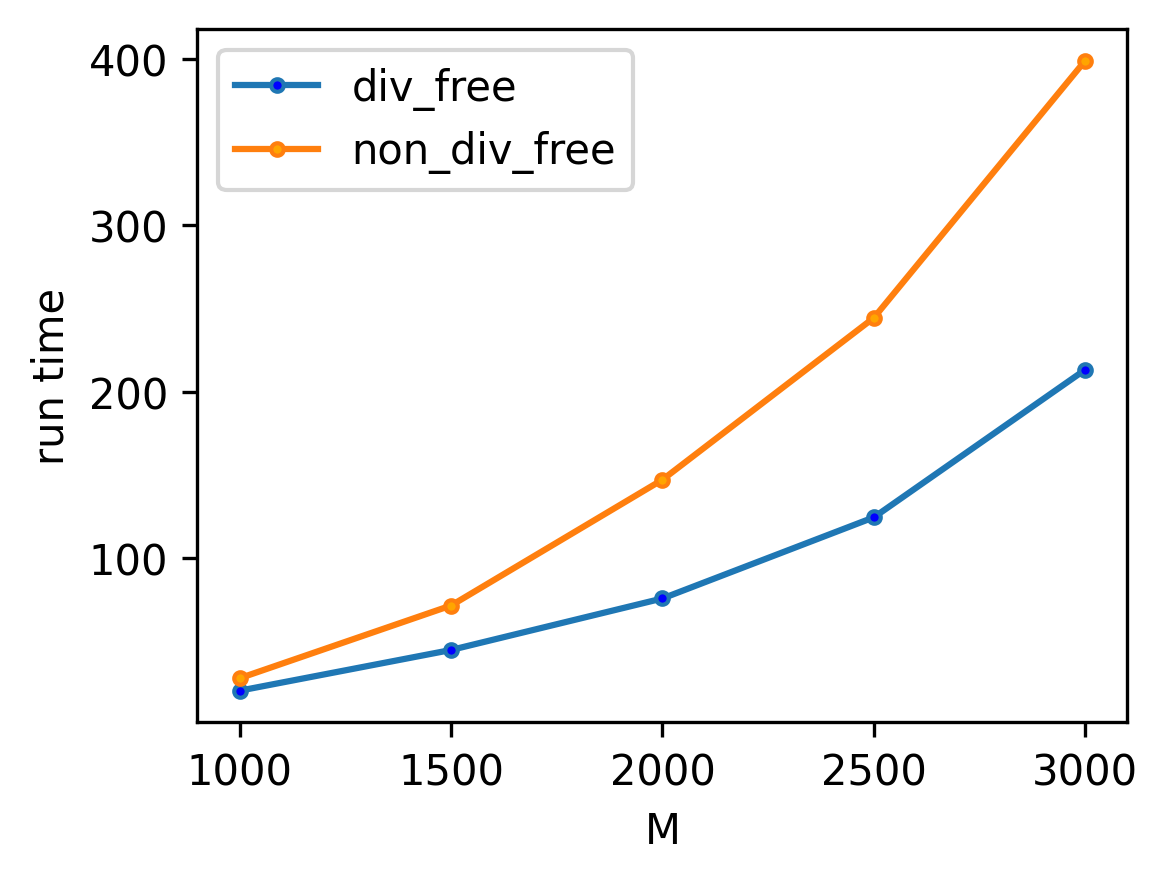}}
  \end{minipage}
  \caption{Numerical results for the 3D Stokes problem with $\nu = 10^{-5}$ and varying $M$. }\label{fig-3d-1}
\end{figure}
Next, we investigate the numerical robustness with respect to the viscosity coefficient $\nu$ for both Decoupled-DFNN and TransNet, using a fixed $M=2500$ as shown in Figure \ref{fig-3d-2}. We find that the velocity error for Decoupled-DFNN initially decreases and subsequently increases as $\nu$ is reduced, whereas the error for TransNet increases monotonically as $\nu$ decreases. Therefore, Decoupled-DFNN demonstrates superior velocity accuracy in the low-viscosity regime ($\nu \leq 10^{-4}$). Regarding pressure, the errors for both methods decrease as $\nu$ is reduced, with Decoupled-DFNN outperforming TransNet specifically when $\nu \leq 10^{-3}$. Furthermore, the divergence-free condition in Decoupled-DFNN reaches machine precision, providing a significant advantage over TransNet across all values of $\nu$. Notably, the computational cost of Decoupled-DFNN is only half that of TransNet.

Finally, we demonstrate the advantages of Decoupled-DFNN over the standard PINN in Figure \ref{fig-3d-3}. The results indicate that Decoupled-DFNN achieves superior performance across all evaluated metrics, including velocity and pressure accuracy, adherence to the incompressibility condition, and computational efficiency.

\begin{figure}[H]
\centering
    \begin{minipage}[t]{0.45\linewidth}
    \subfloat[The relative errors of $\bm{u}$.]
    {\includegraphics[scale=0.5]{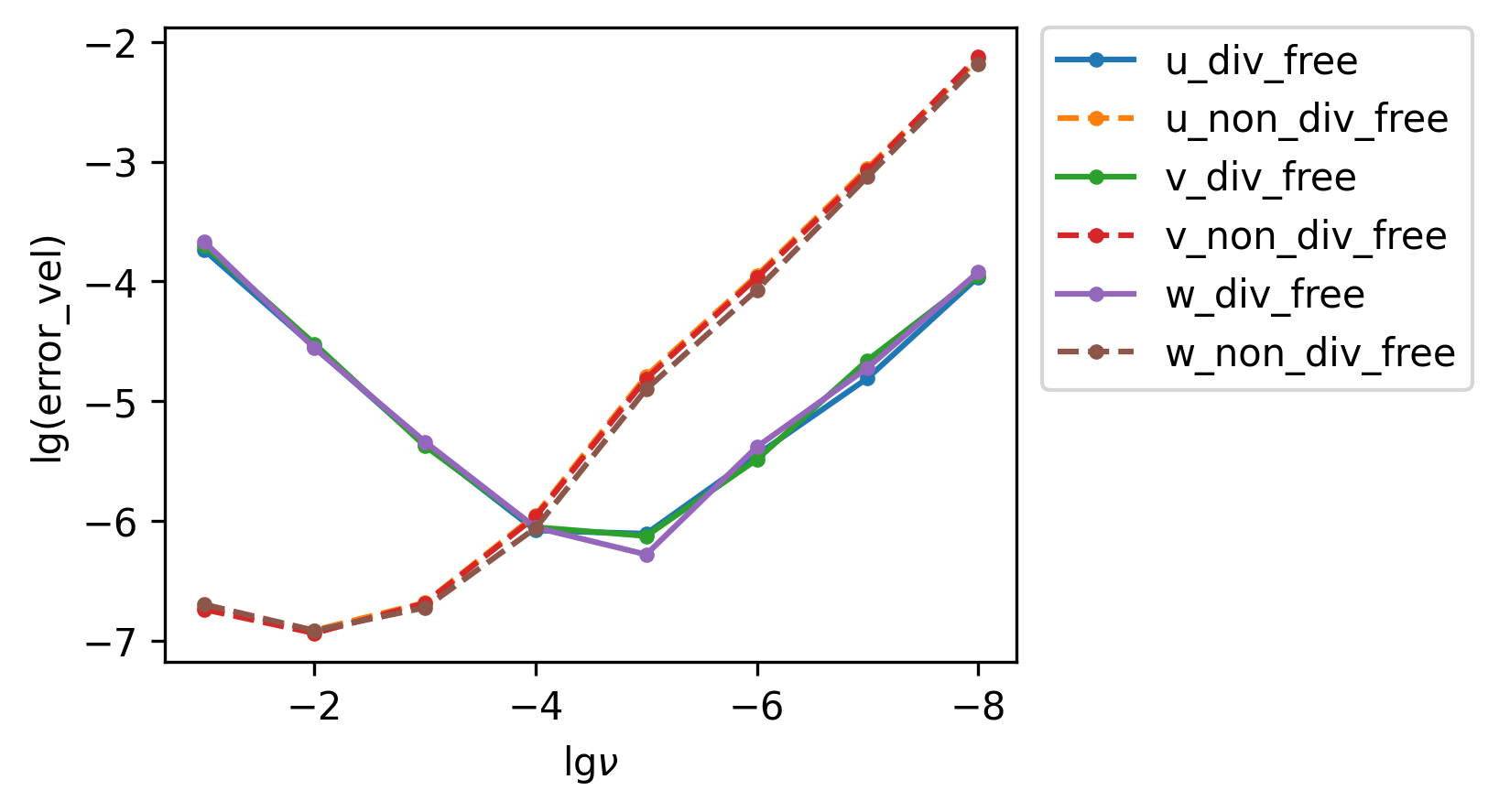}}
  \end{minipage}
  \begin{minipage}[t]{0.45\linewidth}
    \subfloat[The absolute errors of $\dive \bm{u}$.]
    {\includegraphics[scale=0.5]{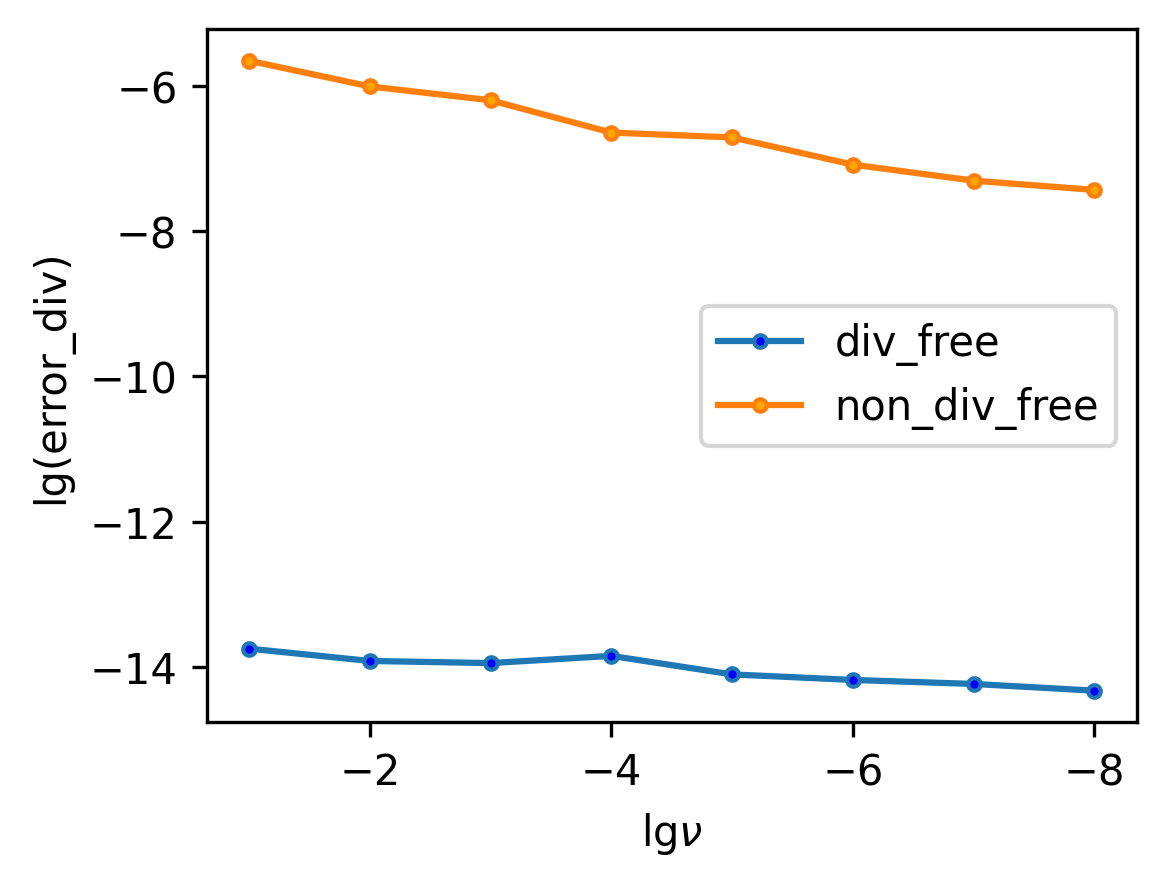}}
  \end{minipage}
  \begin{minipage}[t]{0.45\linewidth}
    \subfloat[The relative errors of $p$.]
    {\includegraphics[scale=0.5]{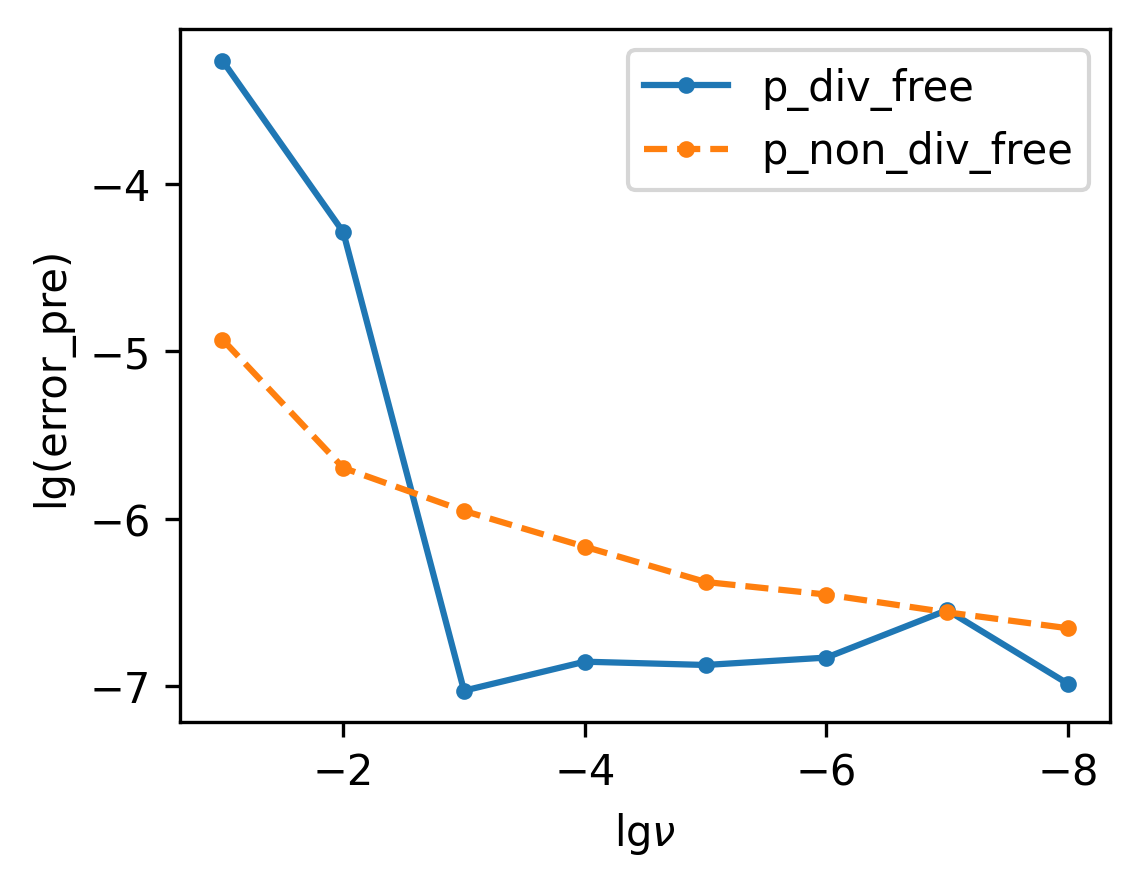}}
  \end{minipage}
   \begin{minipage}[t]{0.45\linewidth}
    \subfloat[The execution time.]
    {\includegraphics[scale=0.5]{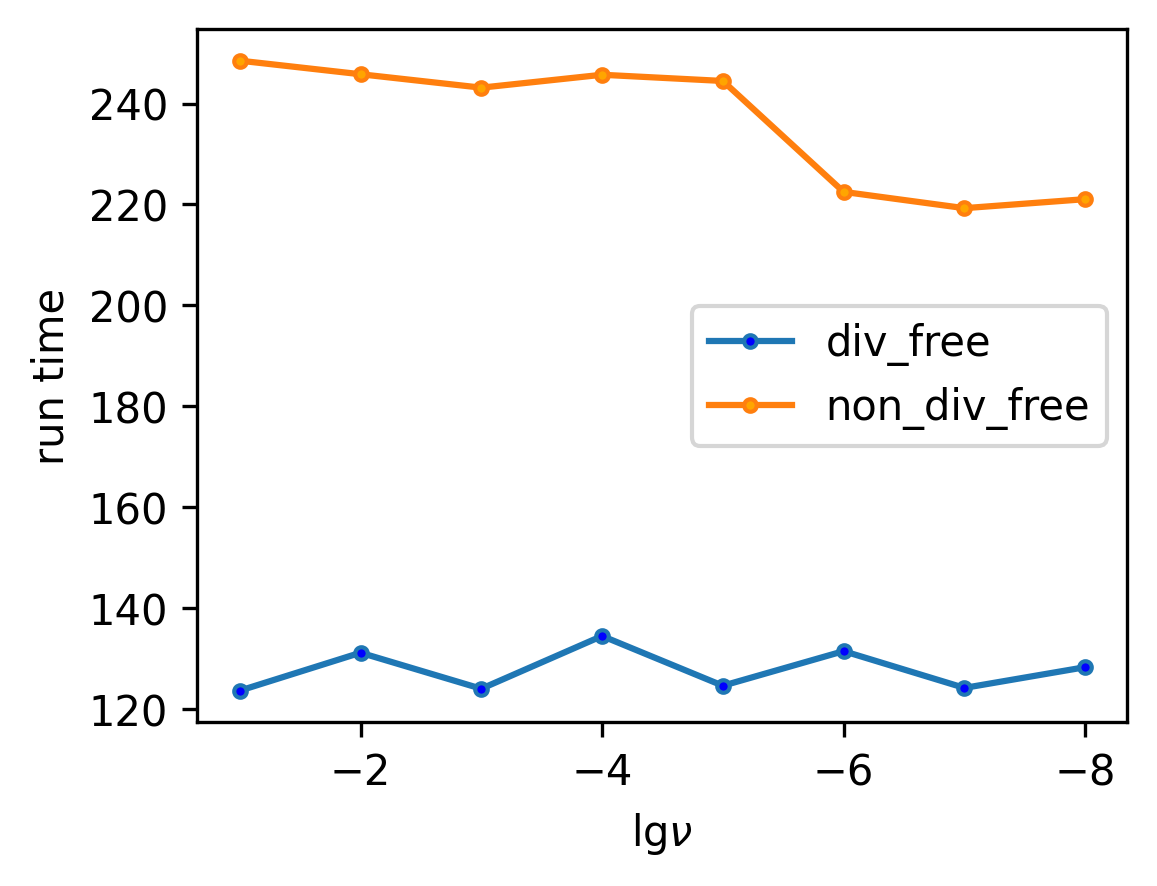}}
  \end{minipage}
  \caption{Numerical results for the 3D Stokes problem with $M=1500$ and varying $\nu$. }\label{fig-3d-2}
\end{figure}

\begin{figure}[H]
\centering
    \begin{minipage}[t]{0.45\linewidth}
    \subfloat[The relative errors of $\bm{u}$.]
    {\includegraphics[scale=0.5]{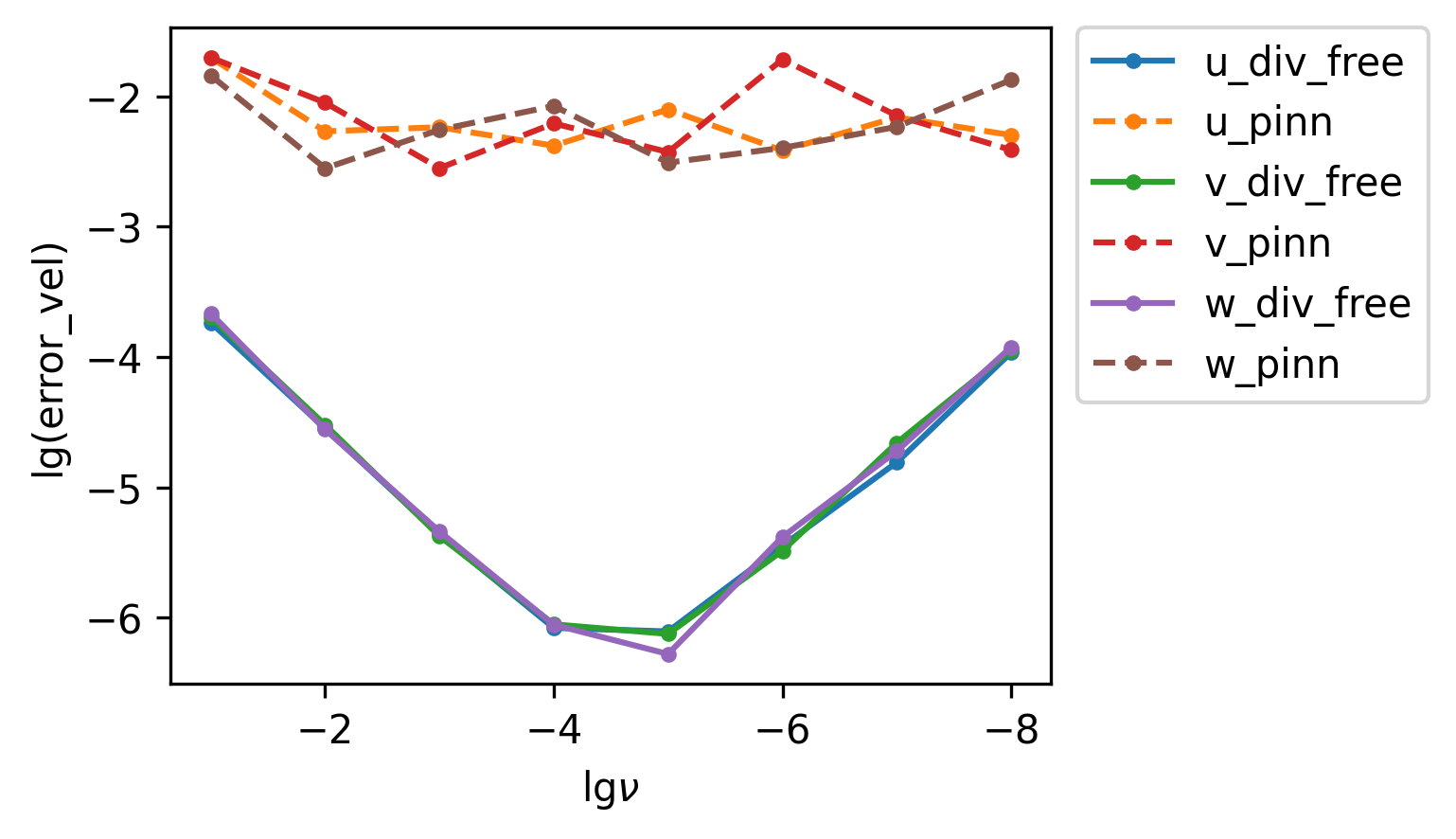}}
  \end{minipage}
  \begin{minipage}[t]{0.45\linewidth}
    \subfloat[The absolute errors of $\dive \bm{u}$.]
    {\includegraphics[scale=0.5]{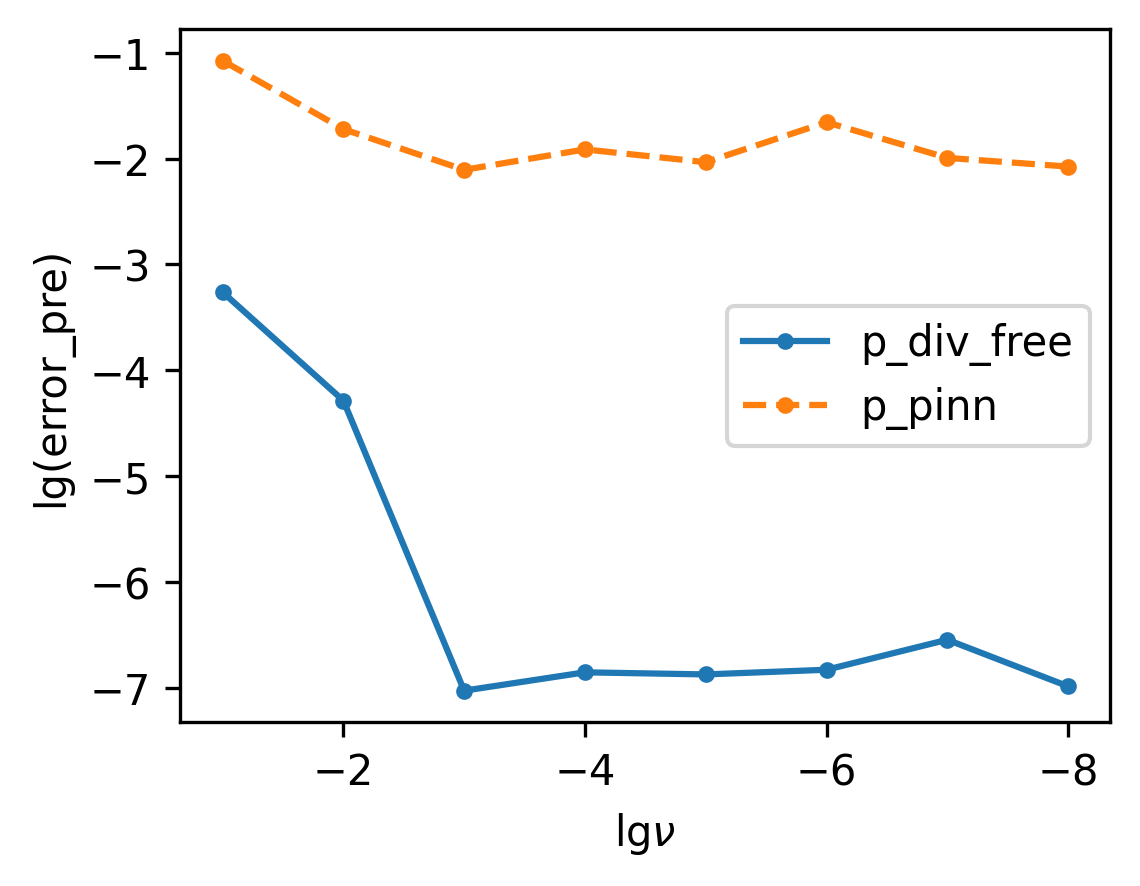}}
  \end{minipage}
  \\
  \begin{minipage}[t]{0.45\linewidth}
    \subfloat[The relative errors of $p$.]
    {\includegraphics[scale=0.5]{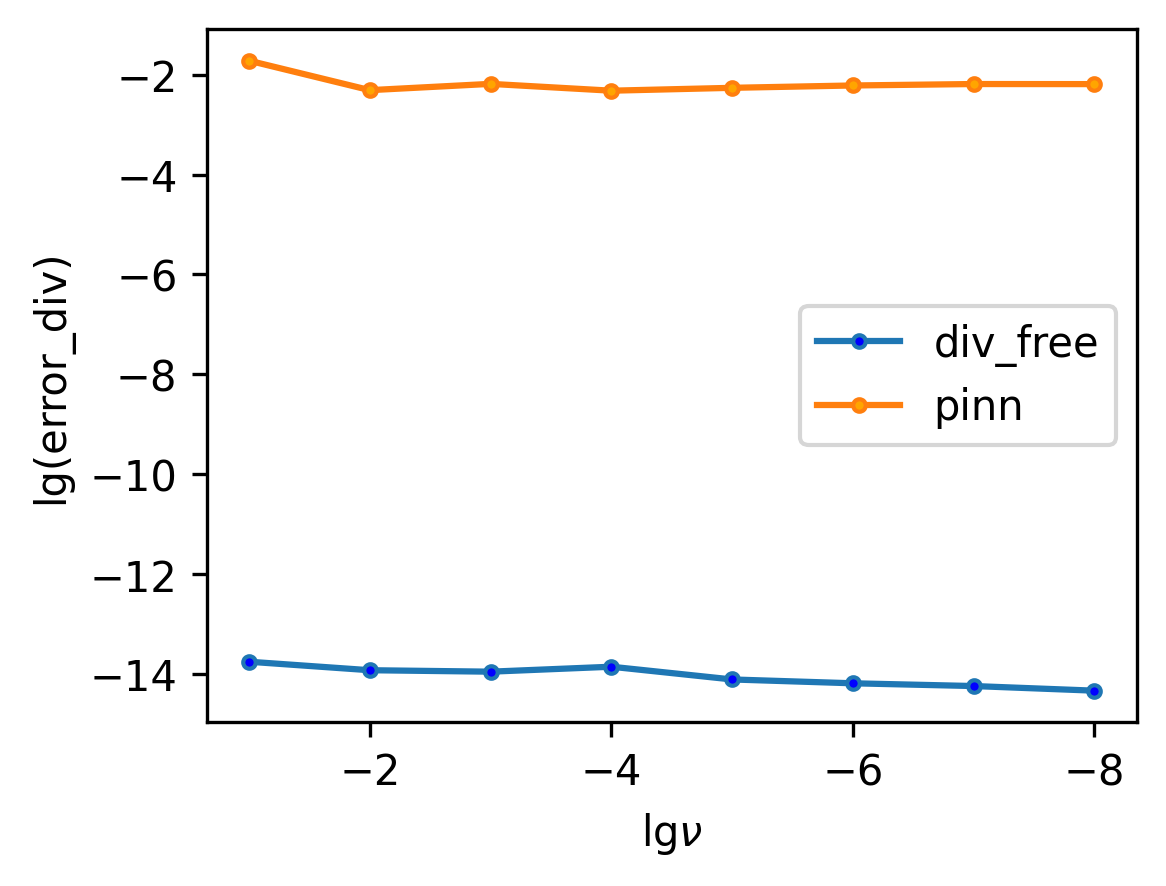}}
  \end{minipage}
    \begin{minipage}[t]{0.45\linewidth}
    \subfloat[The execution time.]
    {\includegraphics[scale=0.5]{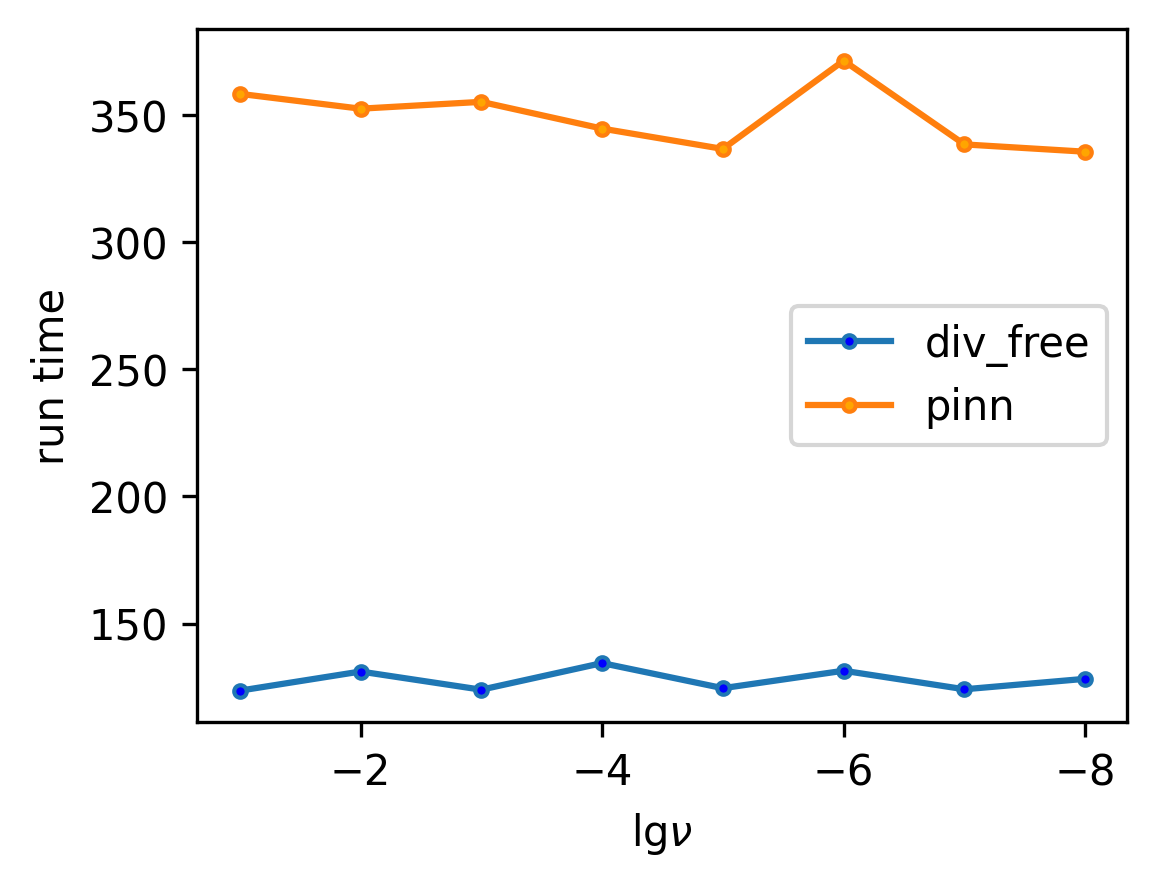}}
  \end{minipage}
  \caption{Numerical results of Decoupled-DFNN and PINN for the 3D Stokes problem.}\label{fig-3d-3}
\end{figure}

\subsection{3D Navier-Stokes problem}
Consider the Navier-Stokes problem defined on $\Omega = (0,1)^3$. The analytical solution reads
\begin{align*}
u &= 2(y-1)(z-1)\sin y\sin z - 2(y-1)\sin y\cos z - 2(z-1)\cos y\sin z, \\
v &= 2(z-1)(x-1)\sin z\sin x - 2(z-1)\sin z\cos x - 2(x-1)\cos z\sin x , \\
w &= 2(x-1)(y-1)\sin x\sin y - 2(x-1)\sin x\cos y - 2(y-1)\cos x\sin y , \\
p &= xyz + x^3y^3z - \frac{5}{32}.
\end{align*}

In this implementation, we employ the Gauss-Newton method to solve the resulting nonlinear systems. To accelerate convergence, the initial guess is taken from the approximation produced by Scheme I after $10$ iterations (see the Appendix for details). The maximum number of iterations for the Gauss-Newton solver is set to $15$.

Figure \ref{fig-3d-4} shows the numerical performance of Decoupled-DFNN and TransNet for varying values of $\nu$. The results demonstrate that Decoupled-DFNN not only achieves higher accuracy in both velocity and pressure when $\nu < 10^{-3}$, but also maintains a significant advantage in enforcing the incompressibility constraint and reducing computational cost. These findings suggest that Decoupled-DFNN provides a more robust and efficient solution, particularly as the viscosity decreases and the flow becomes more challenging to resolve.

\begin{figure}[H]
\centering
    \begin{minipage}[t]{0.45\linewidth}
    \subfloat[The relative errors of $\bm{u}$.]
    {\includegraphics[scale=0.5]{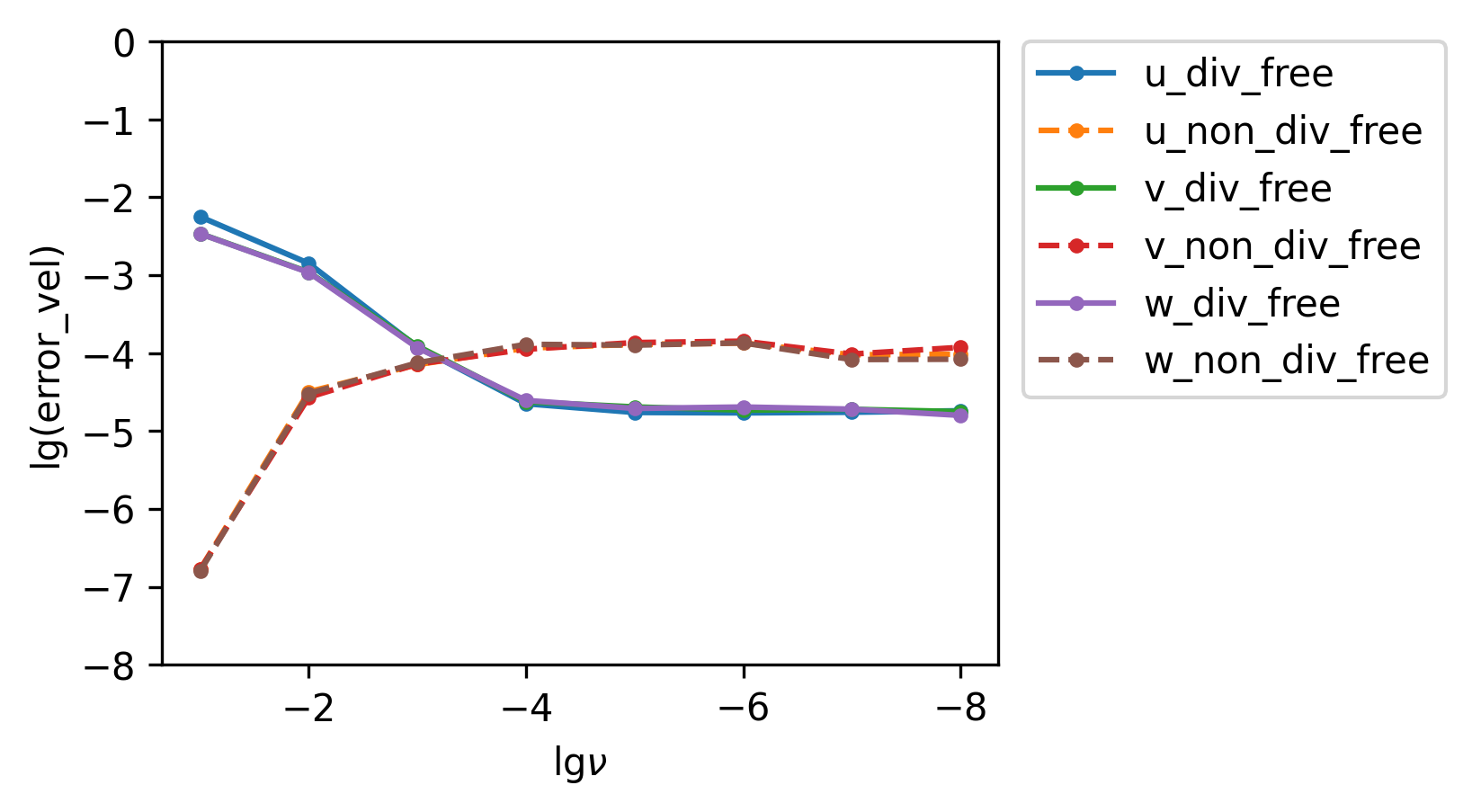}}
  \end{minipage}
  \begin{minipage}[t]{0.45\linewidth}
    \subfloat[The absolute errors of $\dive \bm{u}$.]
    {\includegraphics[scale=0.5]{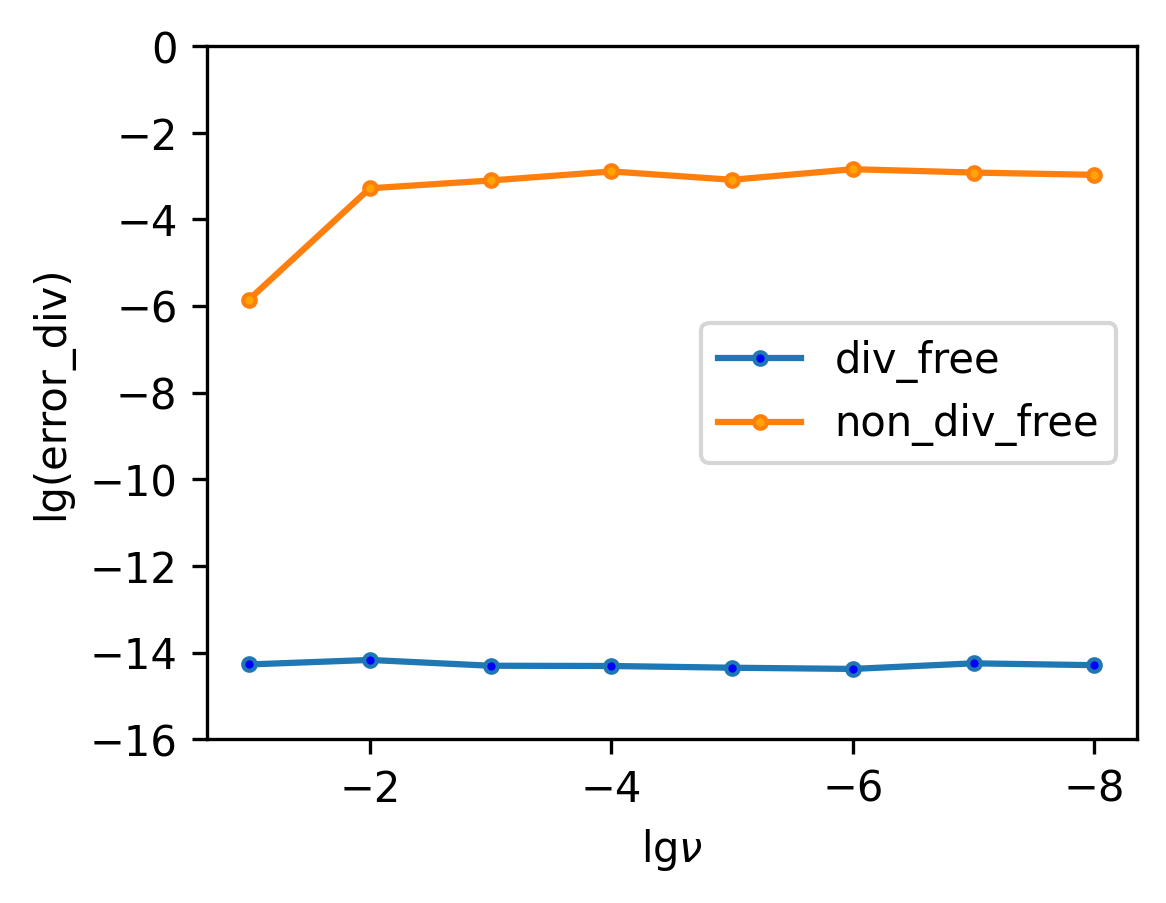}}
  \end{minipage}
  \begin{minipage}[t]{0.45\linewidth}
    \subfloat[The relative errors of $p$.]
    {\includegraphics[scale=0.5]{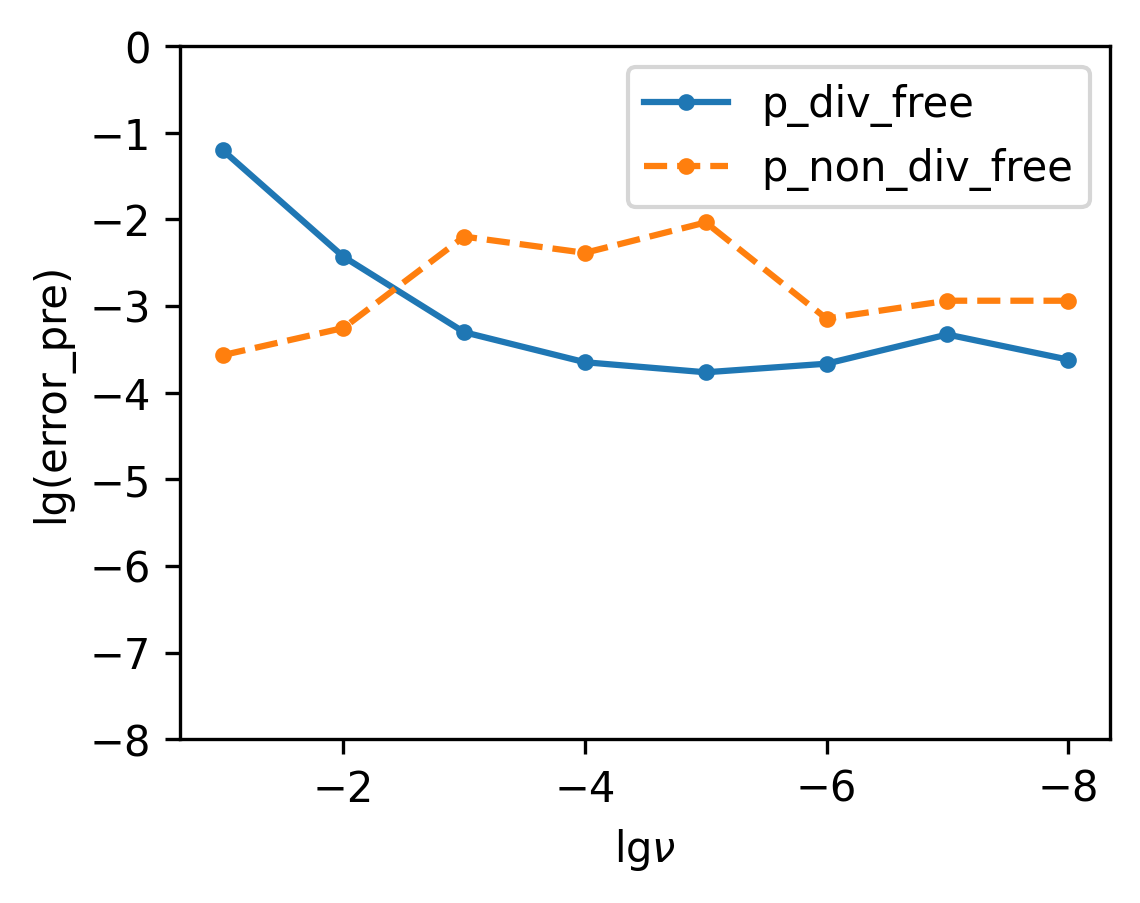}}
  \end{minipage}
   \begin{minipage}[t]{0.45\linewidth}
    \subfloat[The execution time.]
    {\includegraphics[scale=0.5]{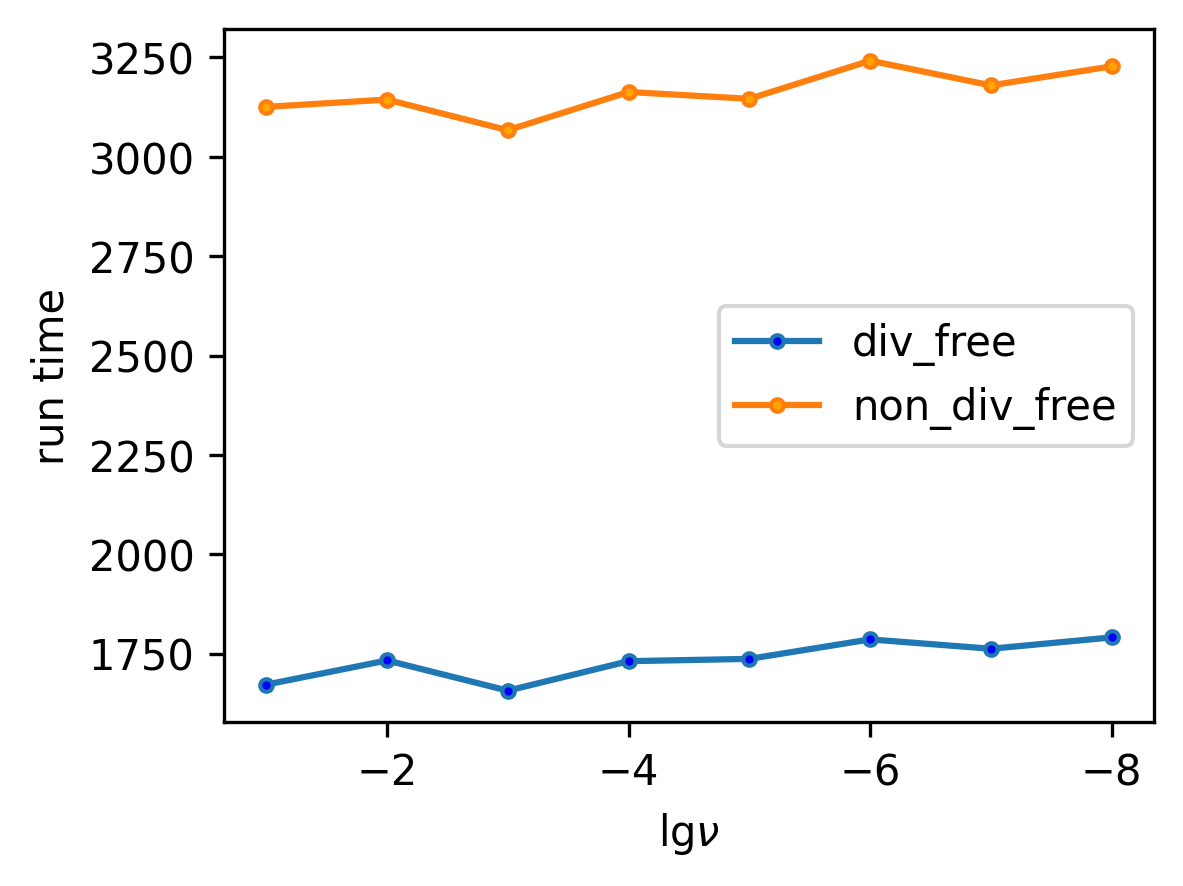}}
  \end{minipage}
  \caption{Numerical results for the 3D Stokes problem with $M=1500$ and varying $\nu$. }\label{fig-3d-4}
\end{figure}

\section{Conclusion}

In this work, we present a decoupled divergence-free neural network method for solving incompressible flow problems, including the Stokes and Navier-Stokes problems. To ensure physical consistency, we adopt the stream function formulation in two-dimensions and vorticity-vector potential formulations in three-dimensions, and further exploit the properties of curl operator to derive fully decoupled divergence-free formulations about velocity and pressure. These formulation leads to a sequential solution strategy: the velocity field is first computed without involving the pressure, and the pressure field is then recovered. These decoupled subproblems are solved using the TransNet framework. To address the inherent nonlinearity of the Navier-Stokes equations, we further employ the Gauss-Newton method to linearize the governing equations, thereby transforming the problem into a sequence of linear subproblems. Numerical experiments demonstrate several key advantages of the Decoupled-DFNN:
\begin{itemize}
\item \textbf{Exact incompressibility}:
The incompressibility constraint is satisfied up to nearly machine precision ($10^{-14}$), significantly outperforming standard PINNs and TransNets that enforce this condition via soft penalty terms.

\item \textbf{High efficiency}:
The proposed method exhibits substantially reduced computational cost, requiring approximately $50\%$ less execution time than comparable frameworks such as TransNet when using a large number of basis functions.

\item  \textbf{Enhanced stability}:
The method achieves superior accuracy in velocity reconstruction, particularly in low-viscosity (high-Reynolds-number) regimes.
\end{itemize}

%In classical numerical methods, reformulations involving higher-order derivatives are typically avoided due to stability and discretization challenges. In contrast, the present approach naturally leads to higher-order formulations through the decoupling process. Although this introduces additional regularity requirements on the solution, such challenges can be effectively addressed within the machine learning framework. In particular, automatic differentiation, together with the smoothness of neural networks, enables the accurate and stable evaluation of high-order derivatives.

Overall, the proposed method offers a favorable balance between structure preservation and computational efficiency: it enforces the divergence-free condition exactly and reduces the complexity of the coupled system through decoupling, at the cost of higher regularity requirements, which are well accommodated by neural network approximations. Finally, we observe that the performance for the Stokes problem is generally better than that for the Navier-Stokes equations. One possible reason is the quality of the initial guess in the Gauss--Newton iteration. Improving the initialization strategy to enhance accuracy will be investigated in future work.

\section*{Declarations}
\begin{itemize}
%\item {\bf Conflicts of interest/Competing interests.} The authors have no known competing financial interests or personal relationships that could have appeared to influence the work reported in this manuscript.

%\item {\bf Availability of data and material.} The datasets generated during and/or analysed during the current study are available from the corresponding author on reasonable request.

\item {\bf Code availability.}  The codes during the current study are available from the corresponding author on reasonable request.

\item {\bf Authors' contributions.} All authors contributed equally to this manuscript.
\end{itemize}

% \subsubsection*{CRediT authorship contribution statement}
% \textbf{Jinbao Cheng}: Formal analysis, Software, Visualization.
% \textbf{Jianguo Huang}: Methodology, Writing - review \& editing, Funding acquisition, Corresponding.
% \textbf{Haoqin Wang}: Formal analysis, Writing - original draft, Funding acquisition
% \textbf{Tao Zhou}: Writing - review \& editing, Funding acquisition.

% \subsubsection*{Data availability}
% Data will be made available on request.

% \subsubsection*{Declaration of interests}
% The authors declare that they have no known competing financial interests or personal relationships that could have appeared to influence the work reported in this paper.

\section*{Acknowledgments}
J. Huang and H. Wang were partially supported by NSFC (Grant No. 12571390). T. Zhou was supported by National Natural Science Foundation of China under grants 12288201 and 12461160275, and the Science challenge project No. TZ2025006.

\section*{Appendix: Linearized Iterative Schemes}
For completeness, we provide the specific formulations for the linearized iterative schemes, which serve as the initialization or alternative approaches to the Gauss-Newton iteration. Following the methodology in \cite{Liu-2023}, the iterations for the velocity field in the 2D case \eqref{NS2d-vel} are defined as follows:
\begin{itemize}
\item Scheme I:
\[
\nu \Delta^2 \phi^{(k+1)} - \curl{\phi}^{(k)}\cdot \nabla\Delta\phi^{(k+1)} = \Curl \bm{f};
\]

\item Scheme II:
\[
\nu \Delta^2 \phi^{(k+1)} - \curl{\phi}^{(k+1)}\cdot \nabla\Delta\phi^{(k)} = \Curl \bm{f};
\]

\item Scheme III:
\[
\nu \Delta^2 \phi^{(k+1)} - \frac{1}{2}\big(\curl{\phi}^{(k)}\cdot \nabla\Delta\phi^{(k+1)} + \curl{\phi}^{(k+1)}\cdot \nabla\Delta\phi^{(k)} \big)\curl{\phi}^{(k)}\cdot \nabla\Delta\phi^{(k+1)} = \Curl \bm{f}.	
\]

\end{itemize}
Similarly, for the 3D case in \eqref{NS3d-vel}, the corresponding schemes are given by:
\begin{itemize}
\item Scheme I:
\[
	\nu\Delta^2\bm{\phi}^{(k+1)} + (\Delta\bm{\phi}^{(k+1)}\cdot\nabla)\Curl\bm{\phi}^{(k)} - (\Curl\bm{\phi}^{(k)}\cdot\nabla)\Delta\bm{\phi}^{(k+1)} = \Curl\bm{f};
\]

\item Scheme II:
\[
	\nu\Delta^2\bm{\phi}^{(k+1)} + (\Delta\bm{\phi}^{(k)}\cdot\nabla)\Curl\bm{\phi}^{(k+1)} - (\Curl\bm{\phi}^{(k+1)}\cdot\nabla)\Delta\bm{\phi}^{(k)} = \Curl\bm{f};
\]

\item Scheme III:
\begin{align*}
	\nu\Delta^2\bm{\phi}^{(k+1)} + &\frac{1}{2}\big((\Delta\bm{\phi}^{(k+1)}\cdot\nabla)\Curl\bm{\phi}^{(k)} - (\Curl\bm{\phi}^{(k)}\cdot\nabla)\Delta\bm{\phi}^{(k+1)} \\
	& + (\Delta\bm{\phi}^{(k)}\cdot\nabla)\Curl\bm{\phi}^{(k+1)} - (\Curl\bm{\phi}^{(k+1)}\cdot\nabla)\Delta\bm{\phi}^{(k)} \big) = \Curl\bm{f}.
\end{align*}
\end{itemize}

\bibliographystyle{amsplain}
\footnotesize

\bibliography{Refs}

\end{document}